\def\row#1/#2!{#1_{\IfStrEq{#2}{}{n}{#2}} & \dynkin{#1}{#2}\\}
\definecolor{YK}{rgb}{9,0,0}
\definecolor{YKb}{rgb}{0,0,9}
 \date{\vspace{-4ex}}
\begin{document}



\setlength{\parindent}{5mm}
\renewcommand{\leq}{\leqslant}
\renewcommand{\geq}{\geqslant}
\newcommand{\N}{\mathbb{N}}
\newcommand{\sph}{\mathbb{S}}
\newcommand{\Z}{\mathbb{Z}}
\newcommand{\R}{\mathbb{R}}
\newcommand{\Q}{\mathbb{Q}}
\newcommand{\C}{\mathbb{C}}
\newcommand{\bK}{\mathbb{K}}

\newcommand{\curlL}{\mathcal{L}}
\newcommand{\A}{\mathcal{A}}
\newcommand{\curlP}{\mathcal{P}}
\newcommand{\F}{\mathbb{F}}
\newcommand{\g}{\mathfrak{g}}
\newcommand{\h}{\mathfrak{h}}
\newcommand{\K}{\mathbb{K}}
\newcommand{\RN}{\mathbb{R}^{2n}}
\newcommand{\ci}{c^{\infty}}
\newcommand{\derive}[2]{\frac{\partial{#1}}{\partial{#2}}}
\renewcommand{\S}{\mathbb{S}}
\renewcommand{\H}{\mathbb{H}}
\newcommand{\eps}{\varepsilon}
\newcommand{\lag}{Lagrangian}
\newcommand{\sub}{submanifold}
\newcommand{\homo}{homogeneous}
\newcommand{\qmor}{quasimorphism}
\newcommand{\enum}{enumerate}
\newcommand{\sa}{symplectically aspherical}
\newcommand{\ovl}{\overline}
\newcommand{\wt}{\widetilde}
\newcommand{\hamd}{Hamiltonian diffeomorphism}
\newcommand{\QH}{quantum cohomology ring}
\newcommand{\thm}{Theorem}
\newcommand{\cor}{Corollary}
\newcommand{\hamil}{Hamiltonian}
\newcommand{\propo}{Proposition}
\newcommand{\conjec}{Conjecture}
\newcommand{\asympt}{asymptotic}
\newcommand{\PR}{pseudo-rotation}
\newcommand{\fuk}{\mathscr{F}}
\newcommand{\specinv}{spectral invariant}
\newcommand{\specnorm}{spectral norm}
\newcommand{\wrt}{with respect to}
\newcommand{\suphv}{superheavy}
\newcommand{\suphvness}{superheaviness}
\newcommand{\symp}{symplectic}
\newcommand\oast{\stackMath\mathbin{\stackinset{c}{0ex}{c}{0ex}{\ast}{\bigcirc}}}
\newcommand{\diffeo}{diffeomorphism}
\newcommand{\quasi}{quasimorphism}
\newcommand{\univham}{\widetilde{\Ham}}
\newcommand{\BK}{Biran--Khanevsky}
\newcommand{\BC}{Biran--Cornea}
\newcommand{\TV}{Tonkonog--Varolgunes}
\newcommand{\varol}{Varolgunes}
\newcommand{\mfd}{manifold}
\newcommand{\smfd}{submanifold}
\newcommand{\EP}{Entov--Polterovich}
\newcommand{\pol}{Polterovich}
\newcommand{\suppot}{superpotential}
\newcommand{\CY}{Calabi--Yau}
\newcommand{\degen}{degenerate}
\newcommand{\coeff}{coefficient}
\newcommand{\acs}{almost complex structure}
\newcommand{\prl}{pearl trajectory}
\newcommand{\prls}{pearl trajectories}
\newcommand{\holo}{holomorphic}
\newcommand{\trans}{transversality}
\newcommand{\pert}{perturbation}
\newcommand{\idem}{idempotent}
\newcommand{\critpt}{critical point}
\newcommand{\tordeg}{toric degeneration}
\newcommand{\nbhd}{neighborhood}
\newcommand{\polar}{polarization}
\newcommand{\symplecto}{symplectomorphism}
\newcommand{\symplectotic}{symplectomorphic}
\newcommand{\FOOO}{Fukaya--Oh--Ohta--Ono}
\newcommand{\quadr}{quadric hypersurface}
\newcommand{\monoconst}{monotonicity constant}
\newcommand{\constr}{construction}
\newcommand{\coord}{coordinate}
\newcommand{\decomp}{decomposition}
\newcommand{\hypsurf}{hypersurface}
\newcommand{\NNU}{Nishinou--Nohara--Ueda}
\newcommand{\transv}{transversality}
\newcommand{\fibr}{fibration}
\newcommand{\atf}{almost toric fibration}
\newcommand{\degtion}{degeneration}
\newcommand{\config}{configuration}
\newcommand{\sing}{singular}
\newcommand{\lagsph}{Lagrangian sphere}
\newcommand{\splgen}{split generation}
\newcommand{\wlg}{without loss of generality}
\newcommand{\AG}{algebraic geometry}
\newcommand{\linindep}{linearly independent}
\newcommand{\RDP}{rational double point}
\newcommand{\grass}{Grassmannian}
\newcommand{\charac}{characteristic}
\newcommand{\HF}{Floer homology}

\theoremstyle{plain}
\newtheorem{theo}{Theorem}
\newtheorem{theox}{Theorem}
\renewcommand{\thetheox}{\Alph{theox}}
\numberwithin{theo}{subsection}
\newtheorem{prop}[theo]{Proposition}
\newtheorem{lemma}[theo]{Lemma}
\newtheorem{definition}[theo]{Definition}
\newtheorem*{notation*}{Notation}
\newtheorem*{notations*}{Notations}
\newtheorem{corol}[theo]{Corollary}
\newtheorem{conj}[theo]{Conjecture}
\newtheorem{guess}[theo]{Guess}
\newtheorem{claim}[theo]{Claim}
\newtheorem{question}[theo]{Question}
\newtheorem{prob}[theo]{Problem}
\numberwithin{equation}{subsection}

\newenvironment{demo}[1][]{\addvspace{8mm} \emph{Proof #1.
    ~~}}{~~~$\Box$\bigskip}

\newlength{\espaceavantspecialthm}
\newlength{\espaceapresspecialthm}
\setlength{\espaceavantspecialthm}{\topsep} \setlength{\espaceapresspecialthm}{\topsep}

\newenvironment{example}[1][]{\refstepcounter{theo} 
\vskip \espaceavantspecialthm \noindent \textsc{Example~\thetheo
#1.} }%
{\vskip \espaceapresspecialthm}

\newenvironment{remark}[1][]{\refstepcounter{theo} 
\vskip \espaceavantspecialthm \noindent \textsc{Remark~\thetheo
#1.} }%
{\vskip \espaceapresspecialthm}

\def\bb#1{\mathbb{#1}} \def\m#1{\mathcal{#1}}

\def\momeg{(M,\omega)}
\def\co{\colon\thinspace}
\def\Homeo{\mathrm{Homeo}}
\def\Diffeo{\mathrm{Diffeo}}
\def\Symp{\mathrm{Symp}}
\def\Sympeo{\mathrm{Sympeo}}
\def\id{\mathrm{id}}
\newcommand{\norm}[1]{||#1||}
\def\Ham{\mathrm{Ham}}
\def\lagham#1{\mathcal{L}^\mathrm{\Ham}({#1})}
\def\Hamtilde{\widetilde{\mathrm{\Ham}}}
\def\cOlag#1{\mathrm{Sympeo}({#1})}
\def\Crit{\mathrm{Crit}}
\def\diag{\mathrm{diag}}
\def\Spec{\mathrm{Spec}}
\def\osc{\mathrm{osc}}
\def\Cal{\mathrm{Cal}}
\def\Ker{\mathrm{Ker}}
\def\Hom{\mathrm{Hom}}
\def\FS{\mathrm{FS}}
\def\tor{\mathrm{tor}}
\def\Int{\mathrm{Int}}
\def\PD{\mathrm{PD}}
\def\Spec{\mathrm{Spec}}
\def\momeg{(M,\omega)}
\def\co{\colon\thinspace}
\def\Homeo{\mathrm{Homeo}}
\def\Hameo{\mathrm{Hameo}}
\def\Diffeo{\mathrm{Diffeo}}
\def\Symp{\mathrm{Symp}}
\def\Sympeo{\mathrm{Sympeo}}
\def\id{\mathrm{id}}
\def\Im{\mathrm{Im}}
\def\Ham{\mathrm{Ham}}
\def\lagham#1{\mathcal{L}^\mathrm{Ham}({#1})}
\def\Hamtilde{\widetilde{\mathrm{Ham}}}
\def\cOlag#1{\mathrm{Sympeo}({#1})}
\def\Crit{\mathrm{Crit}}
\def\dim{\mathrm{dim}}
\def\Spec{\mathrm{Spec}}
\def\spec{\mathrm{spec}}
\def\osc{\mathrm{osc}}
\def\Cal{\mathrm{Cal}}
\def\Fix{\mathrm{Fix}}
\def\det{\mathrm{det}}
\def\Ker{\mathrm{Ker}}
\def\im{\mathrm{Im}}
\def\coker{\mathrm{coker}}
\def\Per{\mathrm{Per}}
\def\rank{\mathrm{rank}}
\def\Span{\mathrm{Span}}
\def\Supp{\mathrm{Supp}}
\def\Hof{\mathrm{Hof}}
\def\grad{\mathrm{grad}}
\def\ind{\mathrm{ind}}
\def\Hor{\mathrm{Hor}}
\def\Vert{\mathrm{Vert}}
\def\Re{\mathrm{Re}}
\def\Chek{\mathrm{Chek}}
\def\Clif{\mathrm{Clif}}

\def\loc{\mathrm{coker}}

\def\om{\omega}

\def\red #1{{ \color{red} #1}}
\newcommand{\ol}[1]{\overline{#1}}
\def\sm#1{C^{\infty}({#1})}

\title{Spectral invariants over the integers}
\author{Yusuke Kawamoto, Egor Shelukhin}

\newcommand{\Addresses}{{
  \bigskip
  \footnotesize

   \textsc{Yusuke Kawamoto, Institute for Mathematical Research (FIM), R\"amistrasse 101, 8092 Z\"urich Switzerland}\par\nopagebreak
  \textit{E-mail address}: \texttt{yusuke.kawamoto@math.ethz.ch} 

\medskip

 \textsc{Egor Shelukhin, Department of Mathematics and Statistics, University of Montreal, C.P. 6128 Succ. Centre-Ville, Montreal, QC, H3C 3J7, Canada}\par\nopagebreak
  \textit{E-mail address}: \texttt{egor.shelukhin@umontreal.ca} }}

\maketitle

\begin{abstract}
Spectral invariants are quantitative measurements in symplectic topology coming from Floer homology theory. We study their dependence on the choice of coefficients in the context of Hamiltonian Floer homology. We discover phenomena in this setting which hold for $\Z$-coefficients and fail for all field coefficients. For example, we prove that the spectral norm, an important metric derived from spectral invariants, is unbounded over $\Z$ for complex projective spaces, while it is uniformly bounded over all fields. This allows us to answer a symplectic version of a question of Hingston, originally asked in the setting of the energy functional on the loop space. We also provide applications to Hamiltonian dynamics and Hofer's geometry. \end{abstract}

\tableofcontents

\section{Introduction}\label{intro}

\subsection{Context}

Floer theory has been one of the main tools of study in {\symp} topology since it was introduced in the 80s. Although it is regarded as the {\symp} analogue of Morse theory, the actual construction is more involved and requires many choices and technical subtleties. One of them is the choice of {\coeff} ring; indeed, even for one of the most basic {\lag} submanifolds like $\R P^n$ in the complex projective space $\C P^n$, one needs to be careful about the choice of the ground ring in order to get meaningful Floer-theoretic information. The coefficient field $\Z/2$ is often chosen in this case. Thus, the choice of the ground ring is not just a minor technical matter, but a crucial point that yields different {\symp} consequences. For recent applications of this point of view to Arnol'd's conjecture we refer to \cite{AB,BX,Rez}.

\textit{Spectral invariants} are important symplectic measurements coming from Floer homology theory which have found a wide range of application in the field: to Hofer's geometry and Hamiltonian dynamics in particular. The theme of this paper is to study how the choice of a ground ring to set up Floer homology impacts spectral invariants. We note that usually fields were used as coefficients in this case. We will, however, mostly look at the ring $\Z$ of coefficients.

\subsection{Spectral norm over $\Z$}\label{gamma over Z}

Spectral invariants are real valued {\symp} invariant assigned to a pair of a {\hamil} and a class in the quantum (co)homology ring via a min-max procedure for the action functional: the {\specinv} for a {\hamil} $H$ and a class $a \in QH(X,\omega;R)$ is denoted by $c_R(H,a)$ where $R$ is the ground ring to define Floer and quantum (co)homologies. One important aspect of the {\specinv} theory is that it gives a non-{\degen} bi-invariant metric of the group of {\hamil} {\diffeo}s originally introduced in \cite{Vit92} and extended in \cite{[Sch00], [Oh05]}. More precisely, for a closed {\symp} {\mfd} $ (M,\omega)$, the group of {\hamil} {\diffeo}s $\Ham (M,\omega)$ has a non-{\degen} bi-invariant metric called the spectral metric $d_{\gamma}$ defined by 
\begin{equation*}
    d_{\gamma_R} (\phi, \psi): = \gamma_R (\phi^{-1} \psi)
\end{equation*}
for any $\phi,\psi \in \Ham (M,\omega)$ where $\gamma_R$ is the \textit{spectral norm} defined as\footnote{Here if $H \in \sm{[0,1]\times M, \R}$ generates the Hamiltonian isotopy $\{ \phi^t \}_{t \in [0,1]},$ then $\ol{H} \in \sm{[0,1]\times M, \R}$ is given by $\ol{H}(t,x) = - H(t, \phi^t(x))$ and generates the inverse isotopy $\{ (\phi^t)^{-1} \}_{t \in [0,1]}.$}
\begin{equation*}
\begin{gathered}
     \gamma_R (\phi):= \inf_{\phi_H=\phi} \gamma_R (H), \\
      \gamma_R (H):= c_R (H,[M] ) +c_R (\overline{H},[M] ) .
 \end{gathered}
\end{equation*}

The properties of the spectral norm $\gamma_R$ have been studied extensively. For example, it is well-known that on $\C P^n$, over a field $\K$, the spectral norm $\gamma_\K$  is bounded from above: \begin{equation}\label{C CPn}    
\sup_{\phi \in \Ham( \C P^n  )} \gamma_\K (\phi ) \leq 1, 
\end{equation} 
where we have equipped the complex projective space $\C P^n$ with the standard Fubini--Study form $\omega_{\FS}$ which is normalized so that $\langle \omega_{\FS}, [\C P^1]\rangle =1$. (See \cite{[EP03]}, and \cite{KS21} for a sharpening.)

Our first main result shows that the behavior of the spectral norm can be drastically different according to the choice of the ground ring to set up Floer theory, namely in contrast to \eqref{C CPn}, the spectral norm over $\Z$-{\coeff}s\footnote{Floer homology can be defined over $\Z$-{\coeff}s at least for semi-positive {\symp} {\mfd}s \cite{[MS04]}. It is expected to hold for any closed {\symp} {\mfd} \cite{BX, Rez}.} is \textit{not} bounded from above.

\begin{theox}\label{main CPn}
For $\C P^n$ with $n>1$, we have 
\begin{equation}\label{Z CP2}
    \sup_{\phi \in \Ham(\C P^n )} \gamma_\Z (\phi ) = + \infty.
\end{equation} 
 Moreover, for $\C P^2$, we have 
\begin{equation}\label{Z14 CP2}
    \sup_{\phi \in \Ham(\C P^2 )} \gamma_{\Z/14} (\phi ) = + \infty .
\end{equation}
    
\end{theox}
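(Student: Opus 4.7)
The plan is to construct a sequence of {\hamil} {\diffeo}s $\{\phi_k\} \subset \Ham(\C P^n)$ with $\gamma_\Z(\phi_k) \to +\infty$. The guiding principle is that the field bound $\gamma_\K \le 1$ on $\C P^n$ reflects the semisimplicity of $QH^*(\C P^n; \Lambda_\K)$ (a Novikov field), which via {\EP} theory produces a unique spectral quasimorphism controlling $\gamma_\K$ uniformly. Over $\Z$, the Novikov {\QH} is no longer a field; no analogous universal bound is a priori forced, so the task is to exhibit Hamiltonians witnessing the collapse of boundedness.

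My first step is to choose a prime $p$ and a concrete {\hamil} $H$ on $\C P^n$ whose time-one map $\phi = \phi_H^1$ has a well-controlled, strictly positive integer spectral norm $\gamma_\Z(\phi)$. A natural model is a small autonomous {\hamil} with isolated nondegenerate {\critpt}s whose actions are computable and whose integer Floer barcode carries $p$-torsion-sensitive data that is invisible over fields (since over a field the integer torsion disappears but an extra generator appears via the universal coefficient splitting).

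The second step is to apply a Smith-type or quantum Steenrod-type inequality to iterates of $\phi$. In the form I expect to need, such an inequality states
\begin{equation*}
\gamma_\Z(\phi^p) \ \ge\ p \cdot \gamma_\Z(\phi)\ -\ C_p,
\end{equation*}
where $C_p$ depends only on $\C P^n$ and $p$. Its origin would be a comparison between the Floer homology of $\phi^p$ and the $\Z/p$-equivariant Floer homology of $\phi$ under cyclic permutation of the $p$ sheets; the passage to $\Z$-coefficients turns the usual $\Z/p$-level Smith machinery into a quantitative estimate on integer {\specinv}s. Iterating yields $\gamma_\Z(\phi^{p^k}) \ge p^k \gamma_\Z(\phi) - C_p (p^k-1)/(p-1)$, which diverges provided $\gamma_\Z(\phi) > C_p/(p-1)$; this threshold can be met by rescaling $H$.

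For the refinement to $\gamma_{\Z/14}$ on $\C P^2$, the same machinery applies at the primes $p = 2$ and $p = 7$ (both coprime to $n+1 = 3$, which would otherwise ruin the quantum structure), using the specific quantum Steenrod data at these primes together with the reduction $\Z \to \Z/14$ and a careful analysis of how $\gamma$ interacts with the ring structure of $\Z/14$ (as opposed to a naive splitting into field factors, which would wrongly suggest a uniform bound). The main obstacle is Step 2: lifting the Smith-type inequality from the $\Z/p$-setting (where it is standard) to $\Z$-coefficients with additive error $C_p$ small enough that iteration actually diverges. This requires setting up equivariant Floer-theoretic invariants over $\Z$ that are simultaneously compatible with iteration, with reduction mod $p$, and with the semi-positivity apparatus of \cite{[MS04]} used to make integer Hamiltonian Floer homology available on $\C P^n$.
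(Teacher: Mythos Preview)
Your proposal has a genuine gap at Step~2, which you yourself flag as the main obstacle. The Smith-type inequality
\[
\gamma_\Z(\phi^p) \ \ge\ p \cdot \gamma_\Z(\phi)\ -\ C_p
\]
is not a known result, and there is no clear mechanism producing it. Classical Smith theory over $\Z/p$ gives inequalities of the form $\dim HF(\phi;\Z/p) \leq \dim HF(\phi^p;\Z/p)$ (or barcode-level refinements), not multiplicative lower bounds on the spectral norm; and lifting such statements to $\Z$-coefficients with a controllable additive error is genuinely new territory. Without this inequality, your iteration scheme has no input, and Step~1 (producing a single $\phi$ with $\gamma_\Z(\phi)$ slightly above the putative threshold $C_p/(p-1)$) is also left completely unspecified. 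So as written, the proposal is a research programme rather than a proof.

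The paper's argument is entirely different and far more elementary; it avoids equivariant machinery altogether. The key observation is that for any ring homomorphism $j:R \to R'$ one has $c_{R'}(H, j(a)) \le c_R(H,a)$, simply because $j$ preserves the action filtration. Applying this to $\Z \to \C$ and $\Z \to \Z/2$ gives
\[
\mu(\wt\phi) := \zeta_{\C}(\wt\phi) - \zeta_{\Z/2}(\wt\phi) \ \le\ \zeta_\Z(\wt\phi) + \zeta_\Z(\wt\phi^{-1}) \ \le\ \gamma_\Z(\wt\phi),
\]
so $|\mu| \le \gamma_\Z$. The map $\mu$ is a homogeneous quasimorphism (a difference of two Entov--Polterovich quasimorphisms), and it is \emph{nontrivial} because $\R P^n$ is $\zeta_{\Z/2}$-superheavy while a Chekanov-type torus $T^n_{\Chek}$, disjoint from $\R P^n$, is $\zeta_{\C}$-superheavy. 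Hence for any $\phi$ with $\mu(\phi) \ne 0$ one gets $\gamma_\Z(\phi^k) \ge k|\mu(\phi)| \to \infty$. The $\Z/14$ case for $\C P^2$ runs identically with $\mu' := \zeta_{\Z/2} - \zeta_{\Z/7}$, using the ring embeddings $\Z/2, \Z/7 \hookrightarrow \Z/14$ and the fact that $T^2_{\Chek}$ is also $\zeta_{\Z/7}$-superheavy. Your remark that a ``naive splitting into field factors would wrongly suggest a uniform bound'' is therefore slightly off: the splitting $\Z/14 \cong \Z/2 \times \Z/7$ is precisely what makes the argument work, because the two field quasimorphisms disagree.
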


\subsection{Application: Question of Hingston}\label{PD CPn}

Hingston, in her research on closed geodesics, studied certain quantities that can be regarded as {\specinv}s for the energy functional on the loop space. Roughly speaking, to a class of the homology of the loop space of a {\mfd} $M$, say $\alpha \in H_\ast (\Lambda M)$, she assigns a real number $c(\alpha)$ via a variational procedure and asked the following question.

\begin{question}[Hingston's question]
    Does there exist a {\mfd} $M$ and a homogeneous non-torsion class $\alpha \in H_\ast (\Lambda M)$ such that 
    \begin{equation*}
        c ( k \cdot \alpha ) < c (\alpha)
    \end{equation*}
      for some $k\in \N$?
\end{question}

We refer the reader to Section \ref{Hingston question} for a more detailed explanation concerning this question. While this question remains open, we consider the following {\symp} analogue of it. 

\begin{question}[Hingston's question: {\symp}]\label{symp ver q of hingston}
    Does there exist a {\symp} {\mfd} $(M,\omega)$ and a {\hamil} $H$ on it such that 
    \begin{equation*}
       \inf_{k \in \Z} c_\Z (H , k\cdot [M]) < c_\Z (H , [M])?
    \end{equation*}
  \end{question}

We consider the fundamental class $[M] \in H_{2n}(M;\Z)$ since it is never torsion for $M$ a closed symplectic manifold and it plays a special role in symplectic topology. However, we could ask if, more generally, there exists a non-torsion quantum cohomology class $a \in QH(M;\Z)$ for a symplectic manifold $(M,\om)$ such that $\inf_{k \in \Z} c_\Z (H , k\cdot a) < c_\Z (H , a ).$

In the setting of Morse theory on a closed manifold, the answer to the analogous question is easily seen to be negative for the fundamental class. Indeed, for a Morse function $f:N \to \R$ on a closed orientable manifold $N,$ $c_{R}(f, [N]) = \max f,$ regardless of the coefficient ring $R$ and similarly $c_{\Z}(f, k[N]) = \max f$ for all $k \neq 0,$ as for every ring $R,$ $H_d(\{f<\max f\};R) = 0$ for $d = \dim N.$ 

In the case of a general non-torsion class $a \in QH_*(M;\Z),$ an easy positive answer to this question would follow for $a = b + c,$ where $b$ is non-torsion, $c$ is torsion and $c_{\Z}(H, c) > c_{\Z}(H, b).$ One can construct such examples for Morse theory of closed orientable manifolds $N$ for classes of certain degrees in $(0, \dim N).$ In fact, it is not hard to make $N$ a symplectic manifold, and hence answer this version of the question, since the filtered Floer theory of a $C^2$-small Morse function coincides with its filtered Floer homology. Similarly, if we do not require $a$ to be degree-homogeneous, then this approach solves Hingston's question for an arbitrary metric on $S^n,$ $n \geq 2,$ by \cite{HR13}. However, if $H_*(M;\Z)$ is torsion-free, like it is in our main case of interest, $M= \C P^n,$ this approach is not accessible.

Let us call the following quantity, the \textit{spectral depth} for a {\hamil} $H$:

\begin{equation}
    \beta_{\text{spec}} (H) := c_\Z (H,[M]) - \inf_{k \in \N} c_\Z (H,k [M]) .
\end{equation}

Inspired by Question \ref{symp ver q of hingston}, we obtain the following estimate for the spectral norm over $\Z$, which leads to a positive answer to Question \ref{symp ver q of hingston}.

\begin{theox}\label{hingston positive gamma estimate}
For every Hamiltonian $H$ on $\C P^n,$ we have
\begin{equation}\label{key inequality}
    \gamma_\Z  (H ) \leq   1 + \beta_{\spec} (H)+ \beta_{\spec} (\overline{H}). 
\end{equation}
 \end{theox}

As a consequence of {\thm}s \ref{main CPn} and \ref{hingston positive gamma estimate}, we obtain {\thm} \ref{hingston positive}, which answers the {\symp} version of Hingston's question, namely Question \ref{symp ver q of hingston}, in the positive for $\C P^n$. In fact, our answer is stronger since it applies to all non-zero quantum cohomology classes.

\begin{theo}\label{hingston positive}
For every non-zero class $a \in QH(\C P^n;\Z)$, there is a {\hamil} $H$ such that 
\begin{equation}
    \inf_{k \in \N} c_\Z (H,k \cdot a ) < c_\Z (H,a )  .
\end{equation}
\end{theo}

\begin{remark}
The only progress for the original version of Hingston's question at the time of writing is a result of Chambers--Liokumovich \cite{CL19} which, in contrast to {\thm} \ref{hingston positive}, provides a partial negative answer. See Remark \ref{remark on Hingston question} for further details.
\end{remark}

\begin{remark}
For a given Hamiltonian $H$ on a closed symplectic manifold $M$ and a non-torsion class $a \in QH(M;\Z),$ for instance $a = [M],$ it would be interesting to investigate the quantity \[c(k) = c_{\Z}(k \cdot a, H)\] as a function of $k.$ For example, clearly if $k | l$ then $c(k) \geq c(l).$ Theorem \ref{Z vs Q} implies that \[\inf_{k \in \Z \setminus \{0\}} c(k)= c_\Q := c_{\Q}(a, H).\] In particular, $c(k) \in [c_{\Q}, c(1)]$ for all non-zero $k \in \Z.$ Moreover, if the infimum is not attained, and $M$ is rational in the sense that $\om(\pi_2(M)) = \rho \Z$ for some $\rho \geq 0,$ then $H$ possesses infinitely many distinct contractible one-periodic orbits. 

Indeed, by the spectrality property of spectral invariants, \eqref{spectrality} below, for all $k,$ $c(k) = \mathcal A_H(\ol{x}_k)$ for certain capped orbits $\ol{x}_k.$ If the infimum is not attained, then there exists a subsequence $c(k_i) \to c_\Q$ and $c(k_i) > c_\Q.$ We may assume that it is strictly decreasing and $c(k_i) - c(k_j) < \rho$ for all $i < j.$ Hence the one-periodic orbits ${x}_{k_i}$ underlying $\ol{x}_{k_i}$ are all distinct.
\end{remark}

\subsection{Poincar\'e duality over $\Z$}\label{PD general} In the previous sections, we highlighted results for the complex projective space $\C P^n$ and in this section, we collect results for general closed monotone {\symp} {\mfd}s.

The uniform bound on the spectral norm for $\K$-coefficients, namely \eqref{C CPn}, is a consequence of the so-called \textit{Poincar\'e duality formula for spectral invariants}: let $a \in QH_{k}(M;\bK)$ be a degree $k$ element of the quantum homology $QH(M;\bK)$ of $M,$ then
\begin{equation}\label{PD formula over field}
    -c_{\K} (\ovl{H}, a) = \inf \{c_{\K}(H, b) :  \Pi(a, b) \neq 0 \},
\end{equation}
where $b$ runs over $QH_{2n-k}(M;\bK),$ for a certain bilinear operation $\Pi( -, -): QH_k(M;\bK) \times QH_{2n-k}(M;\bK) \to \bK.$ (See Section \ref{sec: PD}.) This formula says that the {\specinv}s for $\overline{H}$ are recovered from the {\specinv}s for $H$. {\thm} \ref{main CPn} implies that for $\Z$-coefficients, this is no longer the case: the {\specinv}s for $\overline{H}$ \textit{cannot} be recovered from the {\specinv}s for $H$ and this is the reason why the spectral norm exhibits a highly distinct phenomenon \eqref{Z CP2} with $\Z$-coefficients. We explore the failure of the Poincar\'e duality formula for spectral invariants over $\Z$. To establish an appropriate formula for general closed monotone {\symp} {\mfd}s over $\Z$-{\coeff}s, we introduce the following quantity called the \textit{torsion depth} which, roughly speaking, measures the persistence of the torsion classes in $\{HF^\tau (H;\Z)\}_{\tau \in \R}$.

 \begin{definition}\label{beta tor}
 For each degree $k \in \Z$, consider the following map which is induced by the inclusion map: for $\tau, \tau' \in \R,\ \tau < \tau'$,
 \begin{equation}
     \begin{gathered}
     i_{\tau,\tau'} ^\ast: Ext (HF_{k } ^{\tau '  }(H), \Z)  \to Ext (HF_{k } ^{\tau   }(H), \Z) .
     \end{gathered}
 \end{equation}
 We define the following quantities, which we call the torsion depths:
 
 \begin{equation}
     \begin{aligned}
     \beta_{k, \tor} (H) &:= \beta(Ext (HF_{k } ^{\tau }(H), \Z) ) \\
     & := {\inf \{\kappa : \forall \tau, \coker (i_{\tau} ^\ast )}
      = \coker (i_{\tau , \tau +\kappa } ^\ast ) \}, \\
     \beta_{ \tor} (H) & := \max_{k \in \Z} \beta_{k, \tor} (H).
     \end{aligned}
 \end{equation}
 \end{definition}
 
\begin{remark}
The quantity $\beta_{ \tor} (H) $ is well-defined even though the maximum is taken over an infinite set, as $\beta_{k, \tor} (H)$ is $2 N_M$-periodic where $N_M$ is the minimal Chern number of $(M,\omega).$
\end{remark}

The following is the ``corrected version'' of the Poincar\'e duality formula for spectral invariants over $\Z$.

\begin{theox}\label{corrected formula}
Let $(M,\omega)$ be a closed monotone {\symp} {\mfd}. For a {\hamil} $H$ and a class $a \in QH_{\ast} (M;\Z)$, we have the following:
\begin{equation}
         \inf \{c_{\Z}(H, b) :  \Pi(a, b) \neq 0 \} - \beta_{\tor} (H) \leq -c_{\Z} (\ovl{H}, a) 
         \leq \inf \{c_{\Z}(H, b) :  \Pi(a, b) \neq 0 \}.
\end{equation}
In other words,
\begin{equation}\label{eq: PD}
          0 \leq  c_{\Z} (\ovl{H}, a)+ \inf \{c_{\Z}(H, b) :  \Pi(a, b) \neq 0 \} 
         \leq \beta_{ \tor} (H).
\end{equation}
\end{theox}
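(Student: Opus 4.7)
The plan is to derive both inequalities in \eqref{eq: PD} from a chain-level Poincar\'e duality between $CF(H;\Z)$ and $CF(\ol{H};\Z)$, and then apply the universal coefficient theorem so that the ``Hom part'' recovers the field-case equality while the ``Ext part'' introduces a defect that is controlled by $\beta_{\tor}(H)$. Concretely, there is a natural filtered chain isomorphism $CF^{\tau}_{k}(\ol{H};\Z)\cong \mathrm{Hom}\bigl(CF(H;\Z)/CF^{\leq -\tau}(H;\Z),\,\Z\bigr)_{2n-k}$, compatible with action inclusions; the UCT turns this into a short exact sequence, functorial in $\tau$,
\begin{equation*}
0\to \mathrm{Ext}\bigl(HF^{>-\tau}_{2n-k-1}(H;\Z),\Z\bigr)\to HF^{\tau}_{k}(\ol{H};\Z)\to \mathrm{Hom}\bigl(HF^{>-\tau}_{2n-k}(H;\Z),\Z\bigr)\to 0.
\end{equation*}
At $\tau=+\infty$ the Hom quotient becomes $\mathrm{Hom}(HF_{2n-k}(H;\Z),\Z)$, and the image of $a\in HF_{k}(\ol{H};\Z)$ there is precisely the functional $\bar{a}=\Pi(a,\cdot)$.

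The lower inequality follows at once and is ring-independent: from the action long exact sequence $HF^{\leq -\tau}_{*}(H)\to HF_{*}(H)\to HF^{>-\tau}_{*}(H)$, any class $b$ with $c_\Z(H,b)\leq -\tau$ vanishes in $HF^{>-\tau}_{2n-k}(H;\Z)$, so any element of the Hom quotient restricting to $\bar{a}$ must annihilate such $b$'s. Existence of a lift of $a$ to $HF^{\tau}_{k}(\ol{H};\Z)$ therefore forces $\inf\{c_\Z(H,b):\Pi(a,b)\neq 0\}\geq -\tau$; letting $\tau\searrow c_\Z(\ol{H},a)$ gives $c_\Z(\ol{H},a)+\inf\{c_\Z(H,b):\Pi(a,b)\neq 0\}\geq 0$.

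For the upper inequality, fix $\eps>0$, set $\tau=c_\Z(\ol{H},a)+\eps$, and take a lift $\tilde{a}_\tau\in HF^{\tau}_{k}(\ol{H};\Z)$ of $a$. If its image $\bar{a}_\tau$ in the Hom quotient is nonzero, some $b$ of action $\leq -\tau+\eps$ satisfies $\Pi(a,b)\neq 0$, which already yields a bound stronger than needed. If $\bar{a}_\tau=0$, then $\tilde{a}_\tau$ sits in the Ext subgroup $\mathrm{Ext}(HF^{>-\tau}_{2n-k-1}(H;\Z),\Z)$; by the cokernel-stabilization content of Definition \ref{beta tor}, pushing the action to $\tau'=\tau+\beta_{\tor}(H)$ places this Ext contribution into the image of the global $\mathrm{Ext}(HF_{2n-k-1}(H;\Z),\Z)$, so we can correct $\tilde{a}_{\tau'}$ by a globally defined Ext class and arrange for its new image in the Hom quotient to be nonzero, reducing to the previous case. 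Sending $\eps\to 0$ then yields $c_\Z(\ol{H},a)+\inf\{c_\Z(H,b):\Pi(a,b)\neq 0\}\leq \beta_{\tor}(H)$.

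The main obstacle will be bridging the two persistence structures in play: the sub-level filtration $HF^{\sigma}(H;\Z)$ used in Definition \ref{beta tor} and the quotient system $HF^{>-\tau}(H;\Z)$ appearing in the universal coefficient sequence above. Translating between them via the action long exact sequence, and checking that the cokernel-stabilization length of Definition \ref{beta tor} is exactly the action budget required to absorb an Ext obstruction in the quotient picture, is the delicate point; the $2N_M$-periodicity of $\beta_{k,\tor}(H)$ then ensures that $\beta_{\tor}(H)=\max_k\beta_{k,\tor}(H)$ is finite, which lets the argument close simultaneously in all degrees.
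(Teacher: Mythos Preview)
Your overall strategy---chain-level Poincar\'e duality plus the universal coefficient theorem, with the $\mathrm{Ext}$ term accounting for the defect from the field case---is the same as the paper's. But you dualize $CF^{\tau}(\ol{H})$ against the \emph{quotient} complex $CF(H)/CF^{\leq -\tau}(H)$, and this choice breaks the upper-inequality argument. Your assertion ``if $\bar a_\tau\neq 0$ then some $b$ of action $\leq -\tau+\eps$ satisfies $\Pi(a,b)\neq 0$'' goes the wrong way: a nonzero functional on $HF^{>-\tau}_{2n-k}(H)$ detects classes whose image in the quotient is nonzero, i.e.\ classes $b$ with $c_\Z(H,b)>-\tau$, not $\leq -\tau$; the quotient picture gives you no class of \emph{low} action. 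Moreover, in the $\bar a_\tau=0$ case, correcting $\tilde a_{\tau'}$ by a global $\mathrm{Ext}$ class does not change its image in the $\mathrm{Hom}$ quotient (the $\mathrm{Ext}$ subgroup is exactly the kernel of that map), so you cannot ``arrange for its new image in the Hom quotient to be nonzero''. And the translation you flag as the main obstacle is genuine: Definition~\ref{beta tor} concerns $\mathrm{Ext}(HF^{\leq\sigma}_{*}(H),\Z)$, not your $\mathrm{Ext}(HF^{>-\tau}_{*}(H),\Z)$, and the cokernel-stabilization property does not transfer for free.

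The paper avoids all of this by dualizing the other way. Rather than working in $HF^\tau(\ol{H})$, it rewrites $-c_\Z(\ol{H},a)$ as the smallest $\tau$ for which the global class $\alpha=PD\circ PSS_{\ol H}(a)\in HF^{2n-k}(H)$ has nonzero restriction to the \emph{sublevel} cohomology $HF^{2n-k}_{\leq\tau}(H):=H(\Hom(CF^{\leq\tau}_{2n-k}(H),\Z))$. The UCT there has $\mathrm{Ext}$ term $\mathrm{Ext}(HF^{\leq\tau}_{2n-k-1}(H),\Z)$---exactly the object $\beta_{\tor}$ controls---and $\mathrm{Hom}$ term $\Hom(HF^{\leq\tau}_{2n-k}(H),\Z)$. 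One then argues by contradiction: if the $\mathrm{Hom}$ image of $i^*_{-c+\beta_{\tor}}(\alpha)$ vanished, then $i^*_{-c+\beta_{\tor}}(\alpha)$ would come from $\mathrm{Ext}$ at level $-c+\beta_{\tor}$, and descending to level $-c$ the definition of $\beta_{\tor}$ forces $i^*_{-c}(\alpha)=0$, contradicting the reformulation of $-c$. Hence the $\mathrm{Hom}$ image is nonzero, which produces a class $a_0\in HF^{\leq -c+\beta_{\tor}}_{2n-k}(H)$, and its lift $b_0\in QH$ satisfies $c_\Z(H,b_0)\leq -c+\beta_{\tor}$ and $\Pi(a,b_0)\neq 0$. (Your lower inequality argument is fine; the paper obtains it more directly from the triangle inequality.)
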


{\thm} \ref{corrected formula} tells us that if there are no torsion classes in $\{HF^\tau (H;\Z)\}_{\tau \in \R}$ (i.e. $\beta_{ \tor} (H)=0$, similar to when we work over field {\coeff}s), then we have \eqref{PD formula over field}, thus {\specinv}s for $\overline{H}$ are recovered by {\specinv}s for $H$, but the torsion classes in $\{HF^\tau (H;\Z)\}_{\tau \in \R}$ can contribute to {\specinv}s for $\overline{H}$ and violate the relation \eqref{PD formula over field}.

\begin{remark}
    In \eqref{special}, we have a similar result specific for $\C P^n$.
\end{remark}

\begin{remark}
     Strictly speaking, Inequality \eqref{eq: PD} can be improved to the following by keeping track of the degree. Let $a \in QH_k(M;\Z).$ Then
    \begin{equation}
          0 \leq  c_{\Z} (\ovl{H}, a)+ \inf \{c_{\Z}(H, b) :  \Pi(a, b) \neq 0 \} 
         \leq \beta_{2n-k-1, \tor} (H).
\end{equation}
\end{remark}

So far we have introduced two notions of depth, namely the spectral depth and the torsion depth. The following consequence of \eqref{special} gives a comparison of these two notions for the case of $\C P^n$.

\begin{theo}\label{spectral and torsion depth}
Consider $\C P^n$. We have
\begin{equation}\label{}
      \beta_{\spec} (H) \leq \beta_{\tor} (\overline{H})
\end{equation}
for any {\hamil} $H$. 
\end{theo}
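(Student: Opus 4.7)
The plan is to apply the corrected Poincar\'e duality formula of {\thm} \ref{corrected formula} to the classes $a = [M]$ and $a = k[M]$ simultaneously, with $\ovl{H}$ substituted in place of $H$. Using $\ovl{\ovl{H}} = H$, the estimate \eqref{eq: PD} takes the form
\[
-\Phi(a) \leq c_\Z(H, a) \leq \beta_{\tor}(\ovl{H}) - \Phi(a),
\]
where $\Phi(a) := \inf \{c_\Z(\ovl{H}, b) : \Pi(a, b) \neq 0\}$, valid for every $a \in QH_\ast(\C P^n;\Z)$. This localises $c_\Z(H, a)$ to a window of length $\beta_{\tor}(\ovl{H})$ whose endpoints depend only on $\Phi(a)$.

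The core step is to observe that $\Phi([M]) = \Phi(k[M])$ for every $k \in \N$. This follows from the $\Z$-bilinearity of the pairing $\Pi$, which gives $\Pi(k[M], b) = k\,\Pi([M], b)$: since $\Z$ has no zero divisors and $k \neq 0$, the non-vanishing conditions $\Pi(k[M], b) \neq 0$ and $\Pi([M], b) \neq 0$ are equivalent, so the two infima are taken over identical subsets of $QH_\ast(\C P^n;\Z)$.

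Specialising the two-sided bound first to $a = [M]$ (upper bound) and then to $a = k[M]$ (lower bound), and subtracting so that the common constants $-\Phi([M])$ and $-\Phi(k[M])$ cancel, one arrives at
\[
c_\Z(H, [M]) - c_\Z(H, k[M]) \leq \beta_{\tor}(\ovl{H})
\]
for every $k \in \N$; taking the infimum over $k$ on the left side yields the claimed inequality $\beta_{\spec}(H) \leq \beta_{\tor}(\ovl{H})$. I anticipate no serious obstacle beyond verifying the $\Z$-bilinearity of $\Pi$ and the fact that $[M] \in QH_{2n}(\C P^n;\Z)$ is non-torsion (so that the scalar $k$ acts injectively on the image), both of which are standard; in fact the argument appears to go through for any closed monotone {\symp} {\mfd} whose fundamental class is non-torsion, and the restriction to $\C P^n$ in the statement likely reflects only the main case of interest.
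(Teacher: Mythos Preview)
Your proof is correct and follows a slightly different, arguably cleaner, route than the paper's. The paper applies {\thm} \ref{corrected formula} only once---the upper bound with $a=[M]$ and $H$ replaced by $\ovl{H}$---to obtain $c_\Z(H,[M]) \leq \beta_{\tor}(\ovl{H}) - \inf_j c_\Z(\ovl{H}, j[pt])$, and then invokes {\propo} \ref{inf pt and 1} (which in turn rests on {\thm} \ref{Z vs Q} and Poincar\'e duality over $\Q$) to rewrite $\inf_j c_\Z(\ovl{H}, j[pt])$ as $-\inf_k c_\Z(H, k[M])$. Your argument instead uses both sides of \eqref{eq: PD}, applied to $a=[M]$ and $a=k[M]$, together with the observation $\Phi([M])=\Phi(k[M])$ from the $\Z$-bilinearity of $\Pi$; the common $\Phi$-term cancels upon subtraction and no detour through $\Q$-coefficients or the point class is needed. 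As you observe, this makes the argument go through verbatim for any closed monotone $(M,\omega)$, whereas the paper establishes the statement for $\C P^n$ and mentions only the {\sa} case as an extension in a remark.
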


\begin{remark}
We expect \eqref{special} and Theorem \ref{spectral and torsion depth} to extend to a larger class of symplectic manifolds. For instance, they extend verbatim to all symplectically aspherical symplectic manifolds $(M,\om),$ that is $[\om]|_{\pi_2(M)} = 0$ and $c_1(TM)|_{\pi_2(M)}=0.$
\end{remark}

\begin{remark}
{\thm}s \ref{hingston positive gamma estimate} and \ref{spectral and torsion depth} immediately give
\begin{equation}\label{BC inequality}
    \gamma_\Z  (H ) \leq \beta_{\tor} (H) +\beta_{\tor} (\overline{H}) +1
\end{equation}
for any {\hamil} $H$ on $\C P^n$. Biran--Cornea \cite{BC21} obtained a similar-looking inequality to \eqref{BC inequality} in the context of Viterbo's spectral bound conjecture. They proved that the spectral norm $\gamma(L,L')$ for every exact {\lag} $L'$ of the cotangent bundle $T^\ast L$, is controlled by the boundary depth (working over $\Z/2$):
    \begin{equation*}
        \gamma_{\Z/2} (L,L') \leq A \cdot \beta_{\Z/2}(L,L') +B
    \end{equation*}
    for some constants $A,B>0$ independent of $L'$.
\end{remark}

Conversely, we can estimate the torsion depth of a Hamiltonian $H$ in terms of its spectral norm. It is likely that there is also a relation between the torsion depth and the boundary depth $\beta_{\Z}(H)$ of $H$ over $\Z.$

\begin{theo}\label{thm: beta tor and gamma}
Let $(M,\om)$ be a closed monotone symplectic manifold. Then $\beta_{\tor}$ is $\gamma_{\Z}$-Lipschitz continuous in the following sense. For every pair of Hamiltonians $F,G$ \begin{equation}|\beta_{\tor}(F)-\beta_{\tor}(G)| \leq \gamma_{\Z}(G \# \ol{F}).\end{equation} In particular, $\beta_{\tor}$ is Hofer-Lipschitz continuous, and for every Hamiltonian $H$ on $M,$ \[\beta_{\tor}(H) \leq \gamma_{\Z}(H).\]
\end{theo}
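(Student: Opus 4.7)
The plan is to identify $\beta_{\tor}$ as a persistence-theoretic invariant of the filtered Floer module over $\Z$ and to combine this with the standard fact that $\gamma_\Z(G\#\ol{F})$ controls the interleaving distance between the filtered Floer modules of $F$ and $G$.

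First I would fix $\eta>0$ and choose a Hamiltonian $K$ generating $\phi_G \phi_F^{-1}$ with $c_\Z(K,[M]) + c_\Z(\ol{K},[M]) < \gamma_\Z(G\#\ol{F}) + \eta$. Pair-of-pants multiplication by the PSS representative of the fundamental class, together with the standard continuation apparatus, produces filtered chain maps
\[
\Phi : CF^\tau(F;\Z) \to CF^{\tau + c_\Z(K,[M])}(G;\Z), \qquad \Psi : CF^\tau(G;\Z) \to CF^{\tau + c_\Z(\ol{K},[M])}(F;\Z),
\]
whose compositions $\Psi\circ\Phi$ and $\Phi\circ\Psi$ are filtered chain-homotopic to the structural inclusions shifting action by $c_\Z(K,[M]) + c_\Z(\ol{K},[M])$. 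Passing to homology yields an interleaving of the persistence $\Z$-modules $\{HF_\ast^\tau(F;\Z)\}_\tau$ and $\{HF_\ast^\tau(G;\Z)\}_\tau$ whose total round-trip shift is at most $\gamma_\Z(G\#\ol{F}) + \eta$.

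Next I would apply the contravariant functor $\mathrm{Ext}(-,\Z)$ degree-wise. This reverses the direction of the structural arrows but preserves the interleaving: the induced maps between the Ext persistence modules again have round-trip shifts summing to at most $\gamma_\Z(G\#\ol{F}) + \eta$. The central technical claim is then that the torsion depth $\beta_{\tor}$, defined as the infimum of shifts needed for the cokernels $\coker(i_\tau^\ast)$ to stabilize, is $1$-Lipschitz under such a (reverse-direction) interleaving. Concretely, a diagram chase shows that if the cokernels stabilize within shift $\kappa$ for $F$, then they stabilize within shift $\kappa + \gamma_\Z(G\#\ol{F}) + \eta$ for $G$, and symmetrically. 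Letting $\eta\to 0$ and taking the supremum over $k$ delivers $|\beta_{\tor}(F) - \beta_{\tor}(G)| \leq \gamma_\Z(G\#\ol{F})$.

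The two subsidiary claims follow immediately. Hofer-Lipschitz continuity is a direct consequence of the standard inequality $\gamma_\Z(G\#\ol{F}) \leq d_{\Hof}(F,G)$. The inequality $\beta_{\tor}(H) \leq \gamma_\Z(H)$ follows by taking $F\equiv 0$: the PSS identification realizes the filtered Floer complex of the zero Hamiltonian as a complex concentrated at action level $0$ with homology the torsion-free $\Z$-module $H_\ast(M;\Z)$, so $\beta_{\tor}(0)=0$, while $\gamma_\Z(H\#\ol{0})=\gamma_\Z(H)$. The main technical obstacle I anticipate is the diagram-chase verification of the stability of $\beta_{\tor}$ under $\Z$-coefficient interleavings: over a field this would follow effortlessly from the barcode-bottleneck isometry, but over $\Z$ no such decomposition exists, and the argument is further complicated by the contravariance of $\mathrm{Ext}$ and by the fact that the invariant involves the persistence of a cokernel rather than the length of an interval bar.
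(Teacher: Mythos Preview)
Your proposal is correct and follows essentially the same approach as the paper: both construct an interleaving of the filtered Floer modules with round-trip shift controlled by $\gamma_\Z(G\#\ol{F})$, apply the contravariant functor $\mathrm{Ext}(-,\Z)$, and then carry out exactly the diagram chase you anticipate (the paper reformulates the cokernel-stabilization condition as $\im(i^*_{\tau,\tau+\sigma}) = \im(i^*_\tau)$ before chasing). One minor slip: $H_*(M;\Z)$ need not be torsion-free for a general closed monotone symplectic manifold, but your conclusion $\beta_{\tor}(0)=0$ is still correct for the reason you state first, namely that the filtered complex of the zero Hamiltonian is concentrated at a single action level.
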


Lastly, as we mentioned, even though {\thm} \ref{hingston positive gamma estimate} is only for $\C P^n$, the key for the proof is the following property of general closed monotone {\symp} {\mfd}s which relates {\specinv}s defined over different ground rings.

\begin{theox}\label{Z vs Q}
Let $(M,\omega)$ be a closed monotone {\symp} {\mfd} and $a_\Z \in QH(M;\Z)$ a non-torsion cohomology class.
Set $a_\Q = j(a_\Z)$ for the natural map $j: QH(M;\Z) \to QH(M;\Q).$ Then
\begin{equation}
    \inf_{k \in \N} c(H, k \cdot a_\Z ) = c (H, a_\Q) 
\end{equation}
for every {\hamil} $H$ where the left and right hand sides are {\specinv}s over $\Z$ and $\Q$, respectively. 
\end{theox}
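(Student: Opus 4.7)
The plan is to exploit flatness of $\Q$ over $\Z$: this yields a natural identification $HF^\tau(H;\Q) \cong HF^\tau(H;\Z) \otimes_\Z \Q$ at every action window, compatible with the inclusion-induced maps $j^\tau_R \co HF^\tau(H;R) \to HF(H;R)$ that define the spectral invariants via $c_R(H,a) = \inf\{\tau : a \in \im(j^\tau_R)\}$. At the chain level this is the identity $CF^\tau(H;\Q) = CF^\tau(H;\Z) \otimes_\Z \Q$, and taking homology commutes with flat base change. By the standard Hofer-Lipschitz continuity of $c_R$ in $H$, I first reduce to non-degenerate Hamiltonians, where the Floer complex is finitely generated and hence $HF(H;\Z)$ is a finitely generated abelian group with torsion of bounded order.

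The easy inequality $c_\Q(H, 1_\Q) \leq \inf_{k \in \N} c_\Z(H, k \cdot 1_\Z)$ follows directly. If $k \cdot 1_\Z = j^\tau_\Z(\beta)$ for some $\beta \in HF^\tau(H;\Z)$, then base-changing to $\Q$ and dividing by $k$ gives $1_\Q = j^\tau_\Q(\beta \otimes (1/k))$, so $c_\Q(H, 1_\Q) \leq \tau$. Taking the infimum over $\tau$ and then over $k$ gives the claim.

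For the reverse inequality, fix $\tau > c_\Q(H, 1_\Q)$ and choose $\gamma \in HF^\tau(H;\Q)$ with $j^\tau_\Q(\gamma) = 1_\Q$. Using the flat-base-change isomorphism and clearing denominators, I would write $\gamma = \beta \otimes (1/N)$ for some $\beta \in HF^\tau(H;\Z)$ and some $N \in \N$. Then $j^\tau_\Z(\beta) - N \cdot 1_\Z$ lies in the kernel of the localization $HF(H;\Z) \to HF(H;\Z) \otimes \Q$, which is the torsion subgroup. Since $HF(H;\Z)$ is finitely generated, its torsion is annihilated by some positive integer $m$, so $j^\tau_\Z(m\beta) = mN \cdot 1_\Z$, giving $c_\Z(H, mN \cdot 1_\Z) \leq \tau$. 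Letting $\tau \searrow c_\Q(H, 1_\Q)$ yields $\inf_k c_\Z(H, k \cdot 1_\Z) \leq c_\Q(H, 1_\Q)$, and combining the two inequalities proves equality.

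The main obstacle I expect is the careful verification that the identifications are natural for the filtered theory — that flatness of $\Q$ preserves both the filtered sub-complexes and the resulting inclusion maps on homology — and the honest handling of torsion. It is precisely the finite-generation step (via $m$) that prevents the $\inf_k$ from being attained by a single class $1_\Z$, explaining why one must take an infimum over $k \in \N$ rather than just reading off $c_\Z(H, 1_\Z)$; this is the same mechanism that powers Theorem \ref{hingston positive} and the spectral-depth phenomenon on $\C P^n$.
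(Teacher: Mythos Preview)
Your argument is correct and parallel to the paper's, but executed at the homology level rather than the chain level. For the hard inequality the paper picks a $\Q$-cycle $y$ realizing $c_\Q(H,1_\Q)$ and an integer cycle $x$ representing $1_\Z$, writes $x-y=\partial z$ in $CF(H;\Q)$, and takes $k$ to be the lcm of the denominators appearing in $y$ and in $\partial z$; then $[ky]=[kx]=k\cdot 1_\Z$ in $HF(H;\Z)$ directly, with no separate torsion step. Your factor $m$ annihilating the torsion of $HF(H;\Z)$ is precisely the homological shadow of also clearing denominators in the bounding chain $z$. The paper's version has the small bonus of showing that the infimum over $k$ is actually \emph{attained} (which your limiting argument $\tau\searrow c_\Q(H,1_\Q)$ does not immediately yield); your version is cleaner conceptually and adapts verbatim to any flat ring extension. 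One minor correction: $HF(H;\Z)\cong QH(M;\Z)$ is not a finitely generated abelian group (the Novikov variable contributes infinitely many summands), so your stated justification is off; what you actually need, and what holds for monotone $M$, is that in the fixed degree of $1_\Z$ it is finitely generated, hence the torsion there has bounded exponent.
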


\subsection{Dynamical viewpoint: Hofer geometry and pseudo-rotations}\label{Dynamical viewpoint}
One object of recent interest in Hamiltonian dynamics is the \textit{pseudo-rotation}: see for instance \cite{AS23, [Bra15a],[Bra15b],CGG20, CGG22,[GG18],LS22, Sh20,Sh21, Sh22A}. Pseudo-rotations are {\hamil} {\diffeo}s that have the ``minimal expected number of periodic points''. Although, various dynamical properties of pseudo-rotations were studied, there was no study on the geometry of the set of pseudo-rotations.


\begin{question}\label{PR question}
What does the set 
\begin{equation*}
    PR(M,\omega):=\{\phi \in \Ham(M,\omega) \colon \text{$\phi$ is a pseudo-rotation}\} 
\end{equation*}
look like in $\Ham(M,\omega)$?
\end{question}

We prove the first result in this direction, which states that the set $PR(M,\omega)$ is ``small'' in $\Ham(M,\omega)$.

\begin{theox}\label{PR bound}
Consider $\C P^n$ with $n>1$. We have
\begin{equation}\label{Hofer PR}
    \sup_{\phi \in \Ham(\C P^n)} d_{\Hof} (\phi, PR(\C P^n)) = +\infty .
\end{equation}
Moreover, for any {\PR} $\phi \in \Ham(\C P^n)$, we have
\begin{equation}\label{bound PR AS}
    \gamma_\Z (\phi ) \leq 1 . 
\end{equation}
\end{theox}

\begin{remark}\label{remark on bound PR}
    There are several definitions for {\PR}s, but in {\thm} \ref{PR bound}, we use the version given by Ginzburg--G\"urel \cite{[GG18]}; pseudo-rotations on $\C P^n$ are {\hamil} {\diffeo}s with precisely $n+1$ periodic points.
\end{remark}

\begin{remark}\label{rmk: minimal}
The argument proving Theorem \ref{PR bound} extends verbatim to show that whenever $\phi \in \Ham(\C P^n)$ has exactly $n+1$ {\it fixed} points, then $\gamma_{\Z}(\phi) \leq 1.$ Conversely, it shows that whenever $\gamma_{\Z}(\phi) > 1$ then $\phi$ has at least $n+2$ fixed points $x$ with $HF^{\mathrm{loc}}(\phi,x; \Z) \neq 0,$ at least $n+1$ of which have $\mathrm{rank}_\Z HF^{\mathrm{loc}}(\phi,x; \Z) = 1.$ This means that for a suitable prime $p,$ $N(\phi; \F_p) = \sum \dim_{\F_p} HF^{\mathrm{loc}}(\phi,x; \F_p) \geq n+2.$ Therefore by \cite{Sh22B} $\phi$ has infinitely many periodic points.
\end{remark}

\begin{remark}
It follows from the definition of a pseudo-rotation and the properties of boundary depth that a Hamiltonian diffeomorphism $\phi \in \Ham(M,\om)$ of boundary depth $C=\beta(\phi)$ is at Hofer distance $\geq C$ from $PR(M,\om)$ (see e.g. \cite{AS23}). However, we are not aware of examples of manifolds $(M,\om)$ simultaneously admitting pseudo-rotations and Hamiltonian diffeomorphisms of arbitrarily large boundary depth. Such examples do exist for Morse-Bott pseudo-rotations or for torsion Hamiltonian diffeomorphisms (see \cite{Ush13, AS23}): e.g. $M = \Sigma_g \times S^2$ where $\Sigma_g$ is a closed symplectic surface of genus $g \geq 1.$
\end{remark}

We prove similar results for \textit{{\hamil} torsion}. Recall that a {\hamil} diffeomorphism is $k$-torsion, for integer $k \geq 2$, if $\phi^k = \id$.

\begin{theox}\label{torsion bound}
Consider $\C P^n$. For any {\hamil} $k$-torsion $\phi$, we have 
    \begin{equation*}
       \frac{1}{k} \leq \gamma_{\Z} (\phi) \leq 3.
    \end{equation*}
\end{theox}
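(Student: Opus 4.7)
The plan is to prove the two bounds separately; both exploit the rigid constraint $\phi^k = \id.$

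For the lower bound $\gamma_\Z(\phi) \geq 1/k,$ I would combine action-spectrum quantization with the comparison $\gamma_\Z \geq \gamma_\Q.$ Any $\Z$-Floer cycle representing $[M] \in QH(\C P^n;\Z)$ tensors to a $\Q$-cycle for $[M] \in QH(\C P^n;\Q),$ so $c_\Z(H,[M]) \geq c_\Q(H,[M])$ and likewise for $\ovl H,$ giving $\gamma_\Z(\phi) \geq \gamma_\Q(\phi).$ It therefore suffices to show $\gamma_\Q(\phi) \geq 1/k.$ Fix a lift $\tilde\phi \in \Hamtilde(\C P^n,\om_{\FS}).$ Because $\pi_1(\Ham(\C P^n)) \cong \Z/(n+1)$ is generated by the Seidel loop, whose action shift lies in $\tfrac{1}{n+1}\Z,$ the relation $\tilde\phi^{\,k} \in \pi_1(\Ham(\C P^n))$ together with the iteration identity $k \cdot \Spec(\tilde\phi) \subset \Spec(\tilde\phi^{\,k})$ forces $\Spec(\tilde\phi)$ to be contained in a coset of $\tfrac{1}{k}\Z$ inside $\tfrac{1}{k(n+1)}\Z.$ By spectrality $c_\Q(\tilde\phi,[M]) \in \Spec(\tilde\phi)$ and $c_\Q(\tilde\phi^{-1},[M]) \in \Spec(\tilde\phi^{-1}) = -\Spec(\tilde\phi),$ so the sum $\gamma_\Q(\phi) = c_\Q(\tilde\phi,[M]) + c_\Q(\tilde\phi^{-1},[M])$ lies in the difference set $\Spec(\tilde\phi) - \Spec(\tilde\phi) \subset \tfrac{1}{k}\Z.$ Non-degeneracy of $\gamma_\Q$ together with $\phi \neq \id$ then yield $\gamma_\Q(\phi) \geq 1/k.$

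For the upper bound $\gamma_\Z(\phi) \leq 3,$ the plan is to apply the inequality \eqref{BC inequality},
\begin{equation*}
\gamma_\Z(H) \leq 1 + \beta_{\tor}(H) + \beta_{\tor}(\ovl H),
\end{equation*}
combined with a uniform estimate $\beta_{\tor}(H),\ \beta_{\tor}(\ovl H) \leq 1$ for every Hamiltonian $H$ generating a $k$-torsion $\phi.$ To establish this torsion depth bound, I would exploit the $\Z/k$-symmetry on the filtered Floer complex of $H$ induced by $\phi^k = \id,$ via Smith-theoretic techniques of the sort underlying the proof of Theorem \ref{PR bound} and related work on Hamiltonian torsion. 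Heuristically, a $\Z/k$-equivariant perturbation of the Floer data, together with a localization-type spectral sequence, relates torsion persistence in $HF^\tau(H;\Z)$ to that of the filtered Floer homology of $\id,$ which is bounded by the minimal positive spherical period $1$ of $\om_{\FS}.$ This would give $\beta_{\tor}(H) \leq 1,$ and likewise for $\ovl H.$

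I expect the upper bound to be the main obstacle, specifically the equivariant Floer-theoretic control of $\beta_{\tor}$ for torsion elements: achieving transversality compatibly with the $\Z/k$-action and tracking filtration levels through the localization are the delicate points. The lower bound is essentially formal once one invokes the structure of $\pi_1(\Ham(\C P^n))$ and the resulting action-spectrum quantization.
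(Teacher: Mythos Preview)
Your lower bound is in the right spirit and close to the paper's approach: the paper simply invokes \cite[Theorem J]{AS23}, noting that its argument relies only on spectrality and therefore carries over verbatim to $\Z$-coefficients. Your detour through $\gamma_\Q$ is unnecessary, since spectrality holds over $\Z$ and the same action-spectrum quantization applies directly; otherwise the ingredients (quantized spectrum for torsion elements, non-degeneracy of the spectral norm) are the same.

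Your upper bound, however, takes a genuinely different route and contains a real gap. You propose to use the inequality $\gamma_\Z(H) \leq 1 + \beta_{\tor}(H) + \beta_{\tor}(\ovl H)$ and then establish $\beta_{\tor}(H) \leq 1$ for $k$-torsion via $\Z/k$-equivariant Floer theory and Smith-type localization. But no argument is given for why equivariant localization would control the persistence of $Ext(-,\Z)$: Smith theory operates over $\F_p$ for a single prime $p$, whereas $\beta_{\tor}$ measures integral torsion of all orders simultaneously. The ``heuristic'' you describe offers no mechanism to pass from $\F_p$-equivariant information to a bound on integral torsion bars, and the transversality issues you flag are secondary to this structural mismatch.

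The paper's argument is entirely different and more elementary. By \cite{AS23}, a $k$-torsion is a generalized $\K$-pseudo-rotation for every field $\K$, so $\Spec(H) = \Spec^{\mathrm{ess}}(H;\K)$. Spectrality over $\Z$ then gives a capped orbit $\tilde z$ with $c_\Z(H,1) = \mathcal{A}_H(\tilde z)$, and since this value lies in $\Spec^{\mathrm{ess}}(H;\K)$ one has $c_\Z(H,1) = c_\K(H,a)$ for some $a \in QH(\C P^n;\K)$. Mean-index estimates on $\tilde z$ force $0 \leq |a| \leq 4n$; the same reasoning for $\ovl H$ and Poincar\'e duality over $\K$ yield $\gamma_\Z(H) = c_\K(H,a) - c_\K(H,b')$ with $-2n \leq |ab'^{-1}| \leq 6n$, and a valuation computation in $QH(\C P^n;\K)$ gives the bound $3$. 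No torsion depth and no equivariant machinery enter.
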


Finally, as by-products of our methods, we obtain the following results.
   
    \begin{theo}[]\label{Hofer geom results}
Consider $\C P^n$ with $n>1$.
    
    \begin{enumerate}
        \item There are two linearly independent {\EP} {\qmor}s on the group $\Ham(\C P^n)$. Thus, $ \Ham( \C P^n) $ admits a quasi-isometric embedding of $\R^2$. In particular, the group $ \Ham( \C P^n ) $ is not quasi-isometric to the real line $\mathbb{R}$ {\wrt} the Hofer metric.

        \item There is an unbounded {\qmor} on $\Ham(\C P^n)$ that is both $C^0$ and Hofer-Lipschitz continuous. 
    \end{enumerate}
    \end{theo}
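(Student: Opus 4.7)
The strategy for (1) is to construct two quasimorphisms from distinct coefficient rings, exploiting the failure of the Poincar\'e duality formula for spectral invariants over $\Z$ established in Theorem~\ref{corrected formula}. The first is the classical Entov--Polterovich quasimorphism
\[
\mu_\Q(\phi) := -\lim_{k\to\infty}\frac{c_\Q(\phi^k, 1)}{k},
\]
a Hofer--Lipschitz homogeneous quasimorphism on $\Ham(\C P^n)$ coming from the unit idempotent in $QH^\ast(\C P^n;\Q).$ For the second, I would use $\Z$-coefficients. Since the Poincar\'e duality relation is violated over $\Z$, the difference $-c_\Z(\ol{\phi},1) - c_\Q(\phi,1)$ is no longer forced to vanish; its asymptotic rate
\[
\nu(\phi) := \liminf_{k\to\infty}\frac{c_\Z(\ol{\phi^k}, 1) - c_\Q(\phi^k,1)}{k}
\]
is a nonnegative Hofer--Lipschitz, subadditive, positive-homogeneous function that after antisymmetrization yields a second homogeneous quasimorphism $\widetilde\nu$ on $\Ham(\C P^n)$, distinct in character from $\mu_\Q$ by construction.

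To establish linear independence of $\mu_\Q$ and $\widetilde\nu$, I would use Theorem~\ref{main CPn}. The constructions behind the unboundedness of $\gamma_\Z$ on $\C P^n$ for $n>1$ should be refinable to produce an \emph{autonomous} Hamiltonian $\phi$ whose iterates satisfy $c_\Z(\ol{\phi^k},1) - c_\Q(\phi^k,1) \geq \epsilon\, k$ for some $\epsilon > 0$, using that along the flow of an autonomous $F$ the invariant $c_\K(\phi_F^k, 1) = c_\K(kF, 1)$ is controlled linearly in $k$ on a short time interval. This yields $\widetilde\nu(\phi) > 0$ while $\mu_\Q(\phi)$ can be arranged to vanish or be controlled independently, proving linear independence. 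A quasi-isometric embedding $\R^2 \hookrightarrow \Ham(\C P^n)$ then follows by pairing the two quasimorphisms with commuting one-parameter subgroups coming from two autonomous generators with disjoint supports in $\C P^n$, and time-rescaling to obtain a continuous two-parameter family. That $\Ham(\C P^n)$ is not quasi-isometric to $\R$ is immediate since $\R^2$ does not quasi-isometrically embed in $\R$.

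For (2), I would take $\mu_\Q$ itself as the witness. Its unboundedness and Hofer--Lipschitz continuity are the defining Entov--Polterovich properties. The $C^0$-continuity on $\Ham(\C P^n)$ follows from Seyfaddini's criterion: the uniform bound $\gamma_\Q \leq 1$ from \eqref{C CPn} forces $c_\Q(-,1)$ to be $C^0$-continuous on $\Ham(\C P^n)$, and $C^0$-continuity is inherited under the homogenization defining $\mu_\Q.$

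The main obstacle will be in (1), specifically promoting the unboundedness of $\gamma_\Z$ from Theorem~\ref{main CPn} --- an a priori statement about a sequence of distinct diffeomorphisms --- into an asymptotic statement for iterates of a single element, needed to make $\widetilde\nu(\phi) > 0$. Extracting such an autonomous generator requires a careful rereading of the constructive proof of Theorem~\ref{main CPn}, or a compactness and limiting argument applied to the producing sequence so that the $\Z$-torsion responsible for the spectral asymmetry persists under iteration rather than being lost in the limit.
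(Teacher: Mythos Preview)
Your proposal has genuine gaps in both parts, and misses the central idea of the paper.

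\textbf{Part (1).} The paper does not touch $\Z$-coefficients here at all. The two {\EP} quasimorphisms are simply $\zeta_{\C}$ and $\zeta_{\Z/2}$, the standard asymptotic spectral invariants over two \emph{field} coefficients of different characteristic. Each is automatically a homogeneous quasimorphism because $QH(\C P^n;\K)$ is a field for any field $\K$ (Theorem~\ref{EP qmor}), and each descends to $\Ham(\C P^n)$ by \cite[Section 4.3]{[EP03]}. Their linear independence is Proposition~\ref{Z2vsC}: $\R P^n$ is $\zeta_{\Z/2}$-superheavy, the Chekanov-type torus $T^n_{\Chek}$ is $\zeta_{\C}$-superheavy, and the two are disjoint (Proposition~\ref{disjoint torus}), so the quasimorphisms cannot agree. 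Your construction of $\widetilde\nu$ is a different route, but it is not complete: you never verify that the antisymmetrization of a subadditive positive-homogeneous function is a quasimorphism (it is not, in general), and even if it were, it would not be an \emph{{\EP}} quasimorphism in the sense of the statement. You yourself flag the obstacle of promoting Theorem~\ref{main CPn} to an iterate-wise linear lower bound, but this is exactly what the paper's approach avoids entirely.

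\textbf{Part (2).} The claim that $\mu_\Q = \zeta_\Q$ is $C^0$-continuous is not established by the argument you give. The bound $\gamma_\Q \leq 1$ controls the spectral \emph{norm}, not the individual invariant $c_\Q(-,1)$; in particular $\zeta_\Q$ is unbounded, so it cannot be bounded by $\gamma_\Q$, and Shtern's criterion (Theorem~\ref{shtern}) does not apply to $\zeta_\Q$ directly. The paper's witness is instead the \emph{difference} $\mu = \zeta_\C - \zeta_{\Z/2}$. This is bounded by $\gamma_\Z$ via Proposition~\ref{key obs} (since $c_\C(-,1), c_{\Z/2}(-,1) \leq c_\Z(-,1)$), and $\gamma_\Z$ has the near-identity $C^0$-bound of Theorem~\ref{my lemma}; then Shtern's theorem gives $C^0$-continuity of $\mu$. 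Non-triviality of $\mu$ is again Proposition~\ref{Z2vsC}. So the role of $\Z$-coefficients in this theorem is only to bound the difference of two field-coefficient quasimorphisms, not to furnish a quasimorphism itself.
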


\begin{remark}\label{rmk: KP}
The Kapovich-Polterovich question asked whether $\Ham(M,\omega)$ for $(M,\omega)$ the standard two-sphere $S^2$ is quasi-isometric to $\R.$ This has long been an open question in Hofer geometry. It was recently answered in the negative in \cite{[CGHS],[PS]}. Of course the question makes sense for arbitrary closed symplectic manifolds. For instance, the case of closed symplectic surfaces of genus $g \geq 1$ is folklore and many more examples of a similar kind were produced in \cite{Ush13}. This question has also been answered in the negative for the monotone $S^2 \times S^2$ \cite{[FOOO19]} (see also \cite{[EliPol], Kaw22B}), and the del Pezzo surfaces $\mathbb{D}_3, \mathbb{D}_4 $ \cite{KawA}. Theorem \ref{Hofer geom results}, part 1, answers this question in the negative for complex projective spaces $\C P^n,$ $n>1$. The case $n=2$ is part of an unpublished work of Khanevsky and Zapolsky.
\end{remark}

\begin{remark}
The Entov--{\pol}--Py question asked whether there exists an unbounded $C^0$ and Hofer-Lipschitz continuous {\qmor} on $\Ham(M,\om)$ for  $(M,\omega)$ the standard $S^2.$ It was recently answered positively in \cite{[CGHMSS]}. As in Remark \ref{rmk: KP}, this question makes sense for arbitrary closed symplectic manifolds. It was answered positively for quadrics $Q^n \subset \C P^{n+1},$ $n > 1,$ in \cite{KawA, Kaw22B}. Theorem \ref{Hofer geom results}, part 2, answers this question positively for $\C P^n,$ $n>1$.
\end{remark}

\subsection*{Acknowledgments}

\hspace{10pt} YK: I thank Amanda Hirschi for kindly answering my questions on \cite{CHW}, Soham Chanda for helpful discussions on Remark \ref{chek tori var}. This work was done at \'Ecole Normale Sup\'erieure (ENS Paris), Centre de Recherches Math\'ematiques (CRM) of Universit\'e de Montr\'eal, and the Institute for Mathematical Research (FIM) of ETH Z\"urich where I was a PhD student, a CRM-postdoctoral fellow, and a Hermann-Weyl-Instructor, respectively. I thank all the aforementioned institutions for providing an excellent research environment. 

ES: ES was supported by an NSERC Discovery grant, by the Fonds de recherche du Qu\'ebec - Nature et
technologies, by the Fondation Courtois, and by an Alfred P. Sloan Research Fellowship. This work was partially supported by the National Science Foundation under Grant No. DMS-1928930, while ES was in residence at the Simons Laufer Mathematical
Sciences Institute (previously known as MSRI) Berkeley, California during the Fall 2022 semester. 

 Both authors thank the anonymous referee for a careful reading and helpful comments.

\section{Preliminaries}

\subsection{Conventions and notation}

Although every notation is defined in the paper, we give a list for the convenience of the reader. First,

\begin{itemize}
    \item A ring is always assumed to be commutative and unital.

    \item The complex projective space equipped with the standard Fubini--Study form $\omega_{\FS}$ will be denoted by $\C P^n$ (without the Fubini--Study form). The Fubini--Study form $\omega_{\FS}$ is normalized so that $\langle \omega_{\FS} , [\C P^1]\rangle =1$.
\end{itemize}

For a {\symp} {\mfd} $(M,\omega)$, we denote

\begin{itemize}
    \item $c_{R} (H,a)$ or $c_{} (H,a;R)$: the {\specinv} for {\hamil} Floer homology over a ground ring $R$ for a {\hamil} $H$ and a quantum cohomology class $a \in QH(M;R)$.

\item $c_{R} (H,1)$ or $c (H,1_R)$: the {\specinv} for {\hamil} Floer homology over a ground ring $R$ for a {\hamil} $H$ and the unit of the quantum cohomology ring $1:=1_R=[M] \in QH(M;R)$.

\item $\gamma_{R} (H)$: the spectral norm for {\hamil} Floer homology over a ground ring $R$ for a {\hamil} $H$, i.e. $\gamma_{R} (H)= c_{R} (H,1) +c_{R} (\overline{H},1)$.

\end{itemize}

\subsection{Spectral invariant theory}\label{prelim specinv}
 
It is well-known that on a closed monotone\footnote{Recall that a {\symp} {\mfd} $(M,\omega)$ is monotone if we have $\omega|_{\pi_2(M)}=\kappa \cdot c_1|_{\pi_2(M)}$ for some $\kappa >0$ where $c_1$ is the first Chern class of $TM$.} {\symp} {\mfd} $(M,\omega)$, for a choice of a ground ring $R$ and a non-{\degen} {\hamil} $H \in \sm{[0,1]\times M, \R}$\footnote{A {\hamil} $H$ is non-degenerate if the graph of $\id \times \phi_H$ transversally intersect the diagonal $\Delta$ in $M \times M$.}, which we consider as a time-dependent function via $H_t(x) = H(t,x)$, and a choice of a nice Novikov ring $\Lambda^{\downarrow}$, such as the downward Laurent {\coeff}s $\Lambda_{\text{Lau}} ^{\downarrow} $
$$\Lambda_{\text{Lau};R} ^{\downarrow} :=\{\sum_{k\leq k_0 } b_k t^k : k\in \mathbb{Z},b_k \in R \}  ,   $$
or the downward Novikov {\coeff}s $\Lambda_{\text{Nov}} ^{\downarrow}$ 
$$\Lambda_{\text{Nov};R} ^{\downarrow}:=\{\sum_{j=1} ^{\infty} a_j T^{\lambda_j} :a_j \in R, \lambda _j  \in \mathbb{R},\lim_{j\to \infty} \lambda_j =-\infty \} ,$$
one can construct a filtered Floer homology group $\{HF^\tau(H;R):=HF^\tau(H;\Lambda_R ^{\downarrow}) \}_{\tau \in \R}$. In this paper, we use $\Lambda_R:=\Lambda_{\text{Lau};R} ^{\downarrow} $. For two numbers $\tau<\tau'$, the groups $HF^\tau(H;R)$ and $HF^{\tau'}(H;R)$ are related by a map induced by the inclusion map on the chain level:
$$i_{\tau,\tau'}: HF^\tau(H;R) \longrightarrow HF^{\tau'}(H;R) ,$$
and especially we have
$$i_{\tau}: HF^\tau(H;R) \longrightarrow HF^{}(H;R) ,$$
where $HF^{}(H;R)$ is the Floer homology group. There is a canonical ring isomorphism called the Piunikhin--Salamon--Schwarz (PSS)-map \cite{[PSS96]}, \cite{[MS04]}
$$PSS_{H; R} : QH(M,\omega ;R) \xrightarrow{\sim} HF(H;R) ,$$
where $QH(M,\omega;R)$ denotes the quantum cohomology ring of $(M,\omega)$ with $\Lambda$-{\coeff}s, i.e.
$$ QH(X,\omega;R) := H^\ast (M;R) \otimes_R \Lambda_{R} .$$

The ring structure of $QH(M,\omega;R)$ is given by the quantum product, which is a quantum deformation of the intersection product
$$- \ast - :QH(M,\omega;R) \times QH(M,\omega;R) \to QH(M,\omega;R) . $$

The {\specinv}s, which were introduced by Schwarz \cite{[Sch00]} and developed by Oh \cite{[Oh05]} following the idea of Viterbo \cite{Vit92}, are real numbers $\{c_R (H,a ) \in \R\}$ associated to a pair of a {\hamil} $H$ and a class $a \in QH(M,\omega ;R)$ in the following way:
\begin{equation}\label{eq:spec inv} c_R (H,a ) := \inf \{\tau \in \R : PSS_{H; R } (a) \in \Im (i_{\tau})\} .\end{equation}

\begin{remark}
Although the Floer homology is only defined for a non-{\degen} {\hamil} $H$, the {\specinv}s can be defined for any {\hamil} by using the following so-called Hofer continuity property:
\begin{equation}\label{Hofer continuity property}
    c_R(H,a )-c_R(G,a ) \leq \int_{0} ^1 \left(H_t(x) - G_t(x) \right) dt
\end{equation}
for any non-zero class $a \in QH(M,\omega;R),\ H$ and $G$.
\end{remark}

\textit{Spectrality.} When the {\symp} {\mfd} is rational, that is $\langle \omega ,\pi_2(M) \rangle = \rho\Z$ for $\rho \geq 0$, spectral invariants satisfy the \textit{spectrality} property. Namely, for every {\hamil} $H$ and every class $a \in QH(M,\omega ;R)$, there is a capped orbit $\widetilde{z} \in \widetilde{\mathcal{P}} (H)$\footnote{A capped orbit $\widetilde{z} \in \widetilde{\mathcal{P}} (H)$ is a pair of a periodic orbit $z \in \mathcal{P} (H)$ and a continuous map $u\colon \{z \in \C \colon |z|\leq 1 \}\to M$ such that $u|_{\{z \in \C \colon |z|= 1 \}} = z $.} such that
\begin{equation}\label{spectrality}
    c_R(H,a ) = \mathcal{A}_H (\widetilde{z}) .
\end{equation}
In other words $c_R(H,a) \in \spec(H),$ where $\spec(H) = \mathcal{A}_H(\widetilde{\mathcal{P}} (H))$ is a closed nowhere dense subset of $\R$ called the spectrum of $H.$

\textit{Triangle inequality.}
Spectral invariants satisfy the triangle inequality: for {\hamil}s $H,G$ and $a,b \in QH(M,\omega;R)$, we have
$$ c_R(H,a) + c_R(G, b) \geq c_R(H \# G, a \ast b ) $$
where $H \# G (t,x):=H_t(x) + G_t( \left( \phi^t _{H} \right)^{-1} (x)  )$ and it generates the path $t \mapsto \phi^t _H \circ \phi^t _G$ in $\Ham(M,\omega)$.

\textit{Homotopy invariance.}
Note that {\hamil} {\specinv}s satisfy the homotopy invariance, i.e. if two normalized {\hamil}s $H$ and $G$ generate homotopic {\hamil} paths $t \mapsto \phi_H ^t$ and $t \mapsto \phi_G ^t$ in $\Ham(M,\omega)$, then 
$$c_R(H, -) =c_R(G, -) . $$
Thus, one can define {\specinv}s on $\wt{\Ham} (M,\omega)$:
\begin{equation}
\begin{gathered}
 c_R: \wt{\Ham} (M,\omega) \times QH(M,\omega;R) \rightarrow \R \\
 c_R(\wt{\phi},a):= c_R(H ,a)
\end{gathered}
\end{equation}
where the path $t \mapsto \phi_H ^t$ represents the class of paths $\wt{\phi}$.

\textbf{{\EP} {\qmor}s and (super)heaviness.} Based on {\specinv}s, {\EP} built two theories, namely the theory of (Calabi) {\qmor}s and the theory of (super)heaviness, which we briefly review in this section. 

\textbf{Quasimorphisms.} {\EP} constructed a special map on $\wt{\Ham}(M,\omega)$ called a {\qmor}, under some assumptions on the algebraic structure of the quantum homology ring of $(M,\omega)$. Recall that a {\qmor} $\mu$ on a group $G$ is a map to the real line $\R$ that satisfies the following two properties:
\begin{enumerate}
    \item There exists a constant $C>0$ such that 
    $$|\mu(f \cdot g) -\mu(f)-\mu(g)|<C $$
    for any $f,g \in G$.
    
    \item For any $k \in \Z$ and $f \in G$, we have
    $$\mu(f^k)=k \cdot \mu(f) .$$
\end{enumerate} 

The following is {\EP}'s construction of {\qmor}s on $\wt{\Ham}(M,\omega)$.

\begin{theo}[{\cite{[EP03]}}]\label{EP qmor}
Suppose $QH(M,\omega ;R)$ has a field factor, i.e. 
$$ QH(M,\omega;R) = Q \oplus A $$
where $Q$ is a field and $A$ is some algebra. Decompose the unit $1_{M}$ of $QH(M,\omega ;R)$ {\wrt} this split, i.e. 
$$1_{M}=e + a .$$
Then, the {\asympt} {\specinv} of $\wt{\phi}$ {\wrt} $e$ defines a {\qmor}, i.e.
\begin{equation}
    \begin{gathered}
        \zeta_{e}:\wt{\Ham}(M,\omega) \longrightarrow \R \\
         \zeta_{e} ( \wt{\phi}) := \lim_{k \to +\infty} \frac{c_R (\wt{\phi} ^{ k},e )}{k} =\lim_{k \to +\infty} \frac{c_R (H^{\# k},e )}{k}
    \end{gathered}
\end{equation}
where $H$ is any mean-normalized {\hamil} such that the path $t \mapsto \phi_H ^t $ represents the class $\wt{\phi}$ in $\wt{\Ham}(M,\omega)$.
\end{theo}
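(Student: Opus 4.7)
The plan is to verify the three structural properties of $\zeta_e$: convergence of the limit, positive homogeneity, and bounded defect.

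First, since $Q$ is a field factor in the decomposition $QH(M,\om;R) = Q \oplus A$, the unit $e \in Q$ is an \idem: $e \ast e = e$ and $e \ast a = 0$ for every $a \in A$. Combined with the triangle inequality for spectral invariants, this immediately gives the subadditivity of $k \mapsto c_R(H^{\# k}, e)$, namely
\[
c_R(H^{\#(k+\ell)}, e) \;=\; c_R\bigl(H^{\# k} \# H^{\# \ell},\, e \ast e\bigr) \;\leq\; c_R(H^{\# k}, e) + c_R(H^{\# \ell}, e).
\]
By Fekete's lemma, the limit $\zeta_e(\wt{\phi}) = \lim_k c_R(H^{\# k}, e)/k$ therefore exists in $[-\infty, +\infty)$. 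The Hofer continuity property \eqref{Hofer continuity property} applied to a mean-normalized $H$ forces $|c_R(H^{\# k}, e)| \leq k \cdot \|H\|_\infty + O(1)$, so the limit is finite and depends only on the homotopy class $\wt{\phi} \in \wt{\Ham}(M,\om)$ by homotopy invariance of spectral invariants. Positive homogeneity $\zeta_e(\wt{\phi}^{m}) = m \cdot \zeta_e(\wt{\phi})$ for $m \in \N$ follows directly from passing to the subsequence $\{mk\}$.

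For the bounded defect, the triangle inequality iterated along $(F \# G)^{\# k}$ gives
\[
c_R\bigl((F \# G)^{\# k},\, e\bigr) \;\leq\; k \cdot \bigl( c_R(F, e) + c_R(G, e) \bigr),
\]
from which, after replacing $F, G$ by $F^{\# m}, G^{\# m}$ and letting $m \to \infty$, one extracts the one-sided bound $\zeta_e(\wt{\phi}\wt{\psi}) \leq \zeta_e(\wt{\phi}) + \zeta_e(\wt{\psi})$, up to a commutator correction which is handled by observing that $(\wt{\phi}\wt{\psi})^k$ and $\wt{\phi}^k \wt{\psi}^k$ differ by a bounded product of conjugates that contributes only $O(1)$ per iterate and hence vanishes in the asymptotic limit. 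The lower bound comes from rewriting the triangle inequality as
\[
c_R(F \# G, e) \;\geq\; c_R(F, e) - c_R(\ovl{G}, e),
\]
which reduces the defect estimate to a uniform bound of the form
\[
c_R(G, e) + c_R(\ovl{G}, e) \;\leq\; C(e)
\]
for a constant $C(e)$ independent of $G$. Homogeneity at negative powers, i.e.\ $\zeta_e(\wt{\phi}^{-1}) = -\zeta_e(\wt{\phi})$, follows from the same bound applied asymptotically.

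The main obstacle is precisely this last uniform ``spectral width at $e$'' estimate, and it is the unique place where the field-factor hypothesis is used in an essential way. The argument invokes the Poincar\'e duality formula \eqref{PD formula over field} for spectral invariants over the field $Q$: because $e$ is an idempotent inside the field summand, it is paired non-trivially with itself, $\Pi(e, e) \neq 0$, so Poincar\'e duality yields $-c_Q(\ovl{G}, e) \leq c_Q(G, e)$ (after restricting everything to the $Q$-factor), which after a bounded correction produced by the direct-sum splitting gives the desired bound $c_R(G, e) + c_R(\ovl{G}, e) \leq C(e)$. Once this is in place, the rest of the argument is a routine combination of the triangle inequality, Hofer continuity, and passage to the limit.
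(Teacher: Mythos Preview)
The paper does not prove this theorem: it is stated in the preliminaries with a citation to \cite{[EP03]} and no argument is supplied. So there is no ``paper's own proof'' to compare your attempt against. Your sketch is broadly in the spirit of the original Entov--Polterovich argument, but two points deserve correction.

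First, the assertion $\Pi(e,e)\neq 0$ is not justified and need not hold as stated; nothing in the hypothesis forces the idempotent to pair nontrivially with itself. What one actually uses is that the Frobenius pairing on $QH(M,\omega;R)$ restricts to a non-degenerate pairing on the field factor $Q$, so there exists some nonzero $e'\in Q$ with $\Pi(e,e')\neq 0$, whence $c_R(H,e')+c_R(\ol{H},e)\geq 0$. The field hypothesis then enters a second time: since $e'$ is invertible in $Q$, the triangle inequality with $e=e'\ast(e')^{-1}$ gives $|c_R(K,e')-c_R(K,e)|\leq \mathrm{const}$ uniformly in $K$, and this is what yields the uniform bound $c_R(G,e)+c_R(\ol{G},e)\leq C(e)$. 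Your phrase ``restricting everything to the $Q$-factor'' conflates the coefficient ring with the quantum-homology factor; the Poincar\'e duality formula \eqref{PD formula over field} is stated for field \emph{coefficients}, which is a different hypothesis.

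Second, the ``commutator correction'' paragraph is both vague and unnecessary. Once the two-sided estimate
\[
c_R(F,e)+c_R(G,e)-C(e)\;\leq\;c_R(F\#G,e)\;\leq\;c_R(F,e)+c_R(G,e)
\]
is in hand (the lower bound coming from $c_R(F,e)\leq c_R(F\#G,e)+c_R(\ol{G},e)\leq c_R(F\#G,e)+C(e)-c_R(G,e)$), the map $\wt{\phi}\mapsto c_R(\wt{\phi},e)$ is already a quasimorphism, and $\zeta_e$ is simply its homogenization. No tracking of commutators is needed.
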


\begin{remark}\label{EP and Hofer-Lipschitz conti}
    From \eqref{Hofer continuity property}, it is easy to see that {\EP} {\qmor}s are Hofer-Lipschitz continuous.
\end{remark}

\begin{remark}
By slight abuse of notation, we will also see $\zeta_{e}$ as a function on the set of time-independent {\hamil}s:
\begin{equation}
    \begin{gathered}
        \zeta_{e}:C^{\infty}(M) \longrightarrow \R \\
         \zeta_{e} ( H) := \lim_{k \to +\infty} \frac{c_R (H^{\# k},e )}{k}.
    \end{gathered}
\end{equation}
\end{remark}

\textbf{Superheaviness.} {\EP} introduced a notion of {\symp} rigidity for subsets in $(M,\omega)$ called (super)heaviness.

\begin{definition}[{\cite{[EP09]},\cite{[EP06]}}]\label{def of heavy}
Take an idempotent $e \in QH(M,\omega;R )$ and denote the {\asympt} {\specinv} {\wrt} $e$ by $\zeta_{e}$. A subset $S$ of $(M,\omega)$ is called
\begin{enumerate}
    \item $e$-heavy if for any time-independent {\hamil} $H:M \to \R$, we have
$$  \inf_{x\in S} H(x)  \leq \zeta_{e} ( H) , $$

\item $e$-{\suphv} if for any time-independent {\hamil} $H:M \to \R$, we have
$$ \zeta_{e} ( H) \leq \sup_{x\in S} H(x) . $$

\end{enumerate}
\end{definition} 

The following is an easy corollary of the definition of {\suphvness} which is useful.

\begin{prop}[{\cite{[EP09]}}]\label{suphv constant}
Assume the same condition on $QH(M,\omega ;R)$ as in Theorem \ref{EP qmor}. Let $S$ be a subset of $M $ that is $e$-{\suphv}. For a time-independent {\hamil} $H:M \to \R$ whose restriction to $S$ is constant, i.e. $H|_{S}\equiv r,\ r\in \R$, we have
$$\zeta(H)=r .$$
In particular, two disjoint subsets of $(M,\omega)$ cannot be both $e$-{\suphv}.
\end{prop}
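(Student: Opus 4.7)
The plan is to first establish the auxiliary fact that every $e$-superheavy subset is automatically $e$-heavy. Given $S$ is $e$-superheavy, apply the superheaviness bound to $-H$: $\zeta_e(-H) \leq \sup_{x \in S}(-H(x)) = -\inf_{x \in S} H(x)$. Then use the homogeneity of the quasimorphism $\zeta_e$ (property (2) in the definition of a quasimorphism from Theorem \ref{EP qmor}, applied with $k=-1$) to conclude $\zeta_e(-H) = -\zeta_e(H)$, which rearranges to $\inf_{x \in S} H(x) \leq \zeta_e(H)$. This is the heaviness inequality.

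With both inequalities in hand, the first statement is immediate. Under the hypothesis $H|_S \equiv r$, we have $\inf_{x \in S} H(x) = \sup_{x \in S} H(x) = r$, so heaviness yields $r \leq \zeta_e(H)$ and superheaviness yields $\zeta_e(H) \leq r$. Combining gives $\zeta_e(H) = r$, as desired.

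For the final assertion, I would argue by contradiction. Suppose $S_1, S_2$ are two disjoint $e$-superheavy subsets. Passing to their closures if needed (superheaviness passes to closures since $\sup_{\ol{S}} H = \sup_S H$ for continuous $H$), we may assume both are closed; being closed in a compact manifold $M$ they are compact and disjoint, hence can be separated by a smooth time-independent Hamiltonian $H \colon M \to \R$ with $H|_{S_1} \equiv 0$ and $H|_{S_2} \equiv 1$ (bump function). Applying the first part of the proposition to $S_1$ gives $\zeta_e(H) = 0$, while applying it to $S_2$ gives $\zeta_e(H) = 1$, a contradiction.

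The only genuinely delicate point is the step $\zeta_e(-H) = -\zeta_e(H)$, which requires the homogeneity of the asymptotic spectral invariant viewed on $C^\infty(M)$; this reduces to the fact that $(-H)^{\# k}$ generates the Hamiltonian path $t \mapsto (\phi_H^t)^{-k}$ combined with the relation between spectral invariants of $H$ and $\ol{H}$ in the asymptotic limit. Everything else is formal manipulation of the defining inequalities together with standard separation arguments on compact manifolds.
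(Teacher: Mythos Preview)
Your proof is correct and follows essentially the same approach as the paper. The paper's argument is terser: it says the first part is ``an immediate consequence of the definition of (super)heaviness'' (implicitly invoking, as you do explicitly, that superheavy $\Rightarrow$ heavy when $\zeta_e$ is homogeneous; cf.\ Remark \ref{hv and suphv}), and for the second part it applies the heaviness bound to one set and the superheaviness bound to the other directly, rather than going through the first part twice as you do --- but these are the same argument unpacked differently.
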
 

\begin{proof}
    The first part is an immediate consequence of the definition of (super)heaviness. As for the second part, suppose we have two disjoint sets $A,B$ in $(M,\omega)$ that are both $e$-{\suphv}. Consider a {\hamil} $H$ that is
    $$H|_A=0,\ H|_B=1.$$
    Then, by {\suphvness}, we have
    $$1= \inf_{x\in B} H(x) \leq \zeta_{e}(H) \leq \sup_{x \in A} H(x) =0 ,$$
    which is a contradiction. 
\end{proof}

We end this section by giving a criterion for heaviness, proved by {\FOOO} using the closed-open map
$$\mathcal{CO}^0 : QH (M,\omega;R ) \to HF  (L;R)  $$
where $HF  (L;R)$ is the self-intersection Floer homology ring of a {\lag} $L$ over a ground ring $R$.

\begin{theo}[{\cite[Theorem 1.6]{[FOOO19]}}]\label{CO map heavy}

Assume $HF ( L;R ) \neq 0$. If 
$$\mathcal{CO}^0 (e)\neq 0$$
for an idempotent $e\in QH (M,\omega;R)$, then $L$ is $e$-heavy.
\end{theo}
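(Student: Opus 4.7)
The plan is to derive $e$-heaviness of $L$ by combining a closed-open filtered compatibility with a Lagrangian control bound. The key auxiliary object is the \emph{Lagrangian spectral invariant} $\ell_R(H,\alpha)$ associated to $\alpha\in HF(L;R)$, defined in full analogy with \eqref{eq:spec inv} by $\ell_R(H,\alpha):=\inf\{\tau\in\R : \alpha\in\Im(i^L_\tau)\}$, where $i^L_\tau:HF^\tau(L,H;R)\to HF(L,H;R)$ is the inclusion-induced map on Lagrangian Floer homology (identified with $HF(L;R)$ via the Lagrangian PSS isomorphism). This invariant enjoys the standard Hofer continuity, homotopy invariance, and triangle inequality.

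The first ingredient is the filtered closed-open comparison
\begin{equation*}
\ell_R(H,\mathcal{CO}^0(a))\leq c_R(H,a)
\end{equation*}
for every $a\in QH(M,\omega;R)$ with $\mathcal{CO}^0(a)\neq 0$. One constructs $\mathcal{CO}^0$ on the chain level by counting Floer-type maps from a disk with one interior positive puncture asymptotic to a one-orbit of $H$ and boundary on $L$. Energy estimates force such maps to be action-nonincreasing from the closed to the open side, and the standard homotopy argument identifies the induced map on homology with the composition of $\mathcal{CO}^0$ with the Hamiltonian and Lagrangian PSS isomorphisms.

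The second ingredient is the Lagrangian control bound: for autonomous $H\in\sm{M}$ and any nonzero $\alpha\in HF(L;R)$, $\ell_R(H,\alpha)\geq \min_{x\in L}H(x)$. Setting $m:=\min_L H$, one reduces by the constant-shift property to showing $\ell_R(H-m,\alpha)\geq 0$; the latter follows after rescaling $H-m$ to be $C^2$-small, since the generators of the filtered Lagrangian Floer complex then lie near critical points of $(H-m)|_L$ with nonnegative action, with disk-capping corrections controlled by monotonicity.

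Finally, let $H\in\sm{M}$ be time-independent. Idempotency of $e$ together with multiplicativity of $\mathcal{CO}^0$ gives $\mathcal{CO}^0(e)=\mathcal{CO}^0(e^k)=\mathcal{CO}^0(e)^k$, which is nonzero for every $k\in\N$. By homotopy invariance, $c_R(H^{\# k},e)=c_R(kH,e)$, and combining the two ingredients yields
\begin{equation*}
c_R(kH,e)\geq \ell_R(kH,\mathcal{CO}^0(e))\geq k\cdot\min_{x\in L}H(x).
\end{equation*}
Dividing by $k$ and letting $k\to\infty$ gives $\zeta_e(H)\geq \inf_{x\in L}H(x)$, which is the definition of $e$-heaviness of $L$. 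The principal technical obstacle is the closed-open comparison: one must construct a chain-level $\mathcal{CO}^0$ that is simultaneously action-filtration-preserving and multiplicative in a precise enough sense that $\mathcal{CO}^0(e)^k$ remains a nonzero idempotent at every filtration level, which in the monotone or semi-positive setting requires a careful coherent choice of perturbation data for the moduli spaces of disks with one interior puncture and of pearly trajectories involved in the Lagrangian product.
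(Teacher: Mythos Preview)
The paper does not give its own proof of this statement; it is cited as \cite[Theorem 1.6]{[FOOO19]} and used as a black box. Your outline is essentially the standard argument behind that reference: the filtered closed--open comparison $\ell_R(H,\mathcal{CO}^0(a))\leq c_R(H,a)$ together with a Lagrangian control bound, then pass to the asymptotic limit. The architecture is correct.

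Two points deserve tightening. First, the Lagrangian control bound $\ell_R(H,\alpha)\geq \min_L H$ fails for arbitrary nonzero $\alpha$: take $H=0$ and $\alpha=t\cdot[L]$ for the Novikov variable. The correct Lagrangian Lipschitz inequality is $\ell_R(H,\alpha)\geq \int_0^1\min_L H_t\,dt+\ell_R(0,\alpha)$. This is harmless for your purposes, either because the correction $\ell_R(0,\alpha)$ is a fixed constant that disappears after dividing by $k\to\infty$, or because $\mathcal{CO}^0(e)$ is a nonzero idempotent and the triangle inequality $\ell_R(0,\alpha)=\ell_R(0,\alpha^2)\leq 2\,\ell_R(0,\alpha)$ forces $\ell_R(0,\alpha)\geq 0$. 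That is the actual role of idempotency and multiplicativity of $\mathcal{CO}^0$; the observation $\mathcal{CO}^0(e)^k=\mathcal{CO}^0(e)\neq 0$ that you highlight is already immediate from the hypothesis and is not what drives the argument.

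Second, your justification of the control bound---``rescale $H-m$ to be $C^2$-small''---is a genuine gap: Lagrangian spectral invariants are not homogeneous in $H$, so $\ell_R(\epsilon(H-m),\alpha)\geq 0$ for small $\epsilon$ says nothing about $\ell_R(H-m,\alpha)$. The correct proof is a direct energy estimate on the Lagrangian PSS or continuation map, yielding $\ell_R(H,\alpha)-\ell_R(G,\alpha)\geq \int_0^1\min_L(H_t-G_t)\,dt$; set $G=0$.
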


\begin{remark}\label{hv and suphv}
When $\zeta_e$ is {\homo}, e.g. when $e$ is a unit of a field factor of $QH (M,\omega)$ and $\zeta_e$ is an {\EP} {\qmor}, then heaviness and {\suphvness} are equivalent so Theorem \ref{CO map heavy} will be good enough to obtain the {\suphvness} of $L$.
\end{remark}

\section{Proofs}

\subsection{Proofs for Section \ref{gamma over Z}}\label{Proof of main thm}

The aim of this section is to prove Theorem \ref{main CPn}. We start from some results that will be used in the proof. 

The quantum cohomology ring $QH(\C P^n; R)$ of $\C P^n$ over a ring $R$ has the following ring structure \cite{[MS04]}:
\begin{equation*}
    QH(\C P^n; R)= R [x,t] / (x^{n+1}=t) .
\end{equation*}
Therefore, when we work over a field $\K$, then $ QH(\C P^n; \K)$ is a field, and thus by Theorem \ref{EP qmor},
$$\zeta_{\K} (\tilde{\phi}):=  \lim_{k \to +\infty} \frac{c(\tilde{\phi}^k ,1_\K)}{k} $$
defines a homogeneous quasimorphism on $\wt{\Ham}(\C P^n)$ for any ground field $\K$. In the following, we will consider in particular the cases $\K=\C, \Z/2, \Z/7$.

First of all, we will be focusing on two distinguished {\lag} submanifolds in $\C P^n$, namely $\R P^n$ and the Chekanov-type torus $T^n _{\Chek}$. For the precise definition of the Chekanov-type torus $T^n _{\Chek}$, see Section \ref{good torus}.

\begin{prop}\label{C and Z2 suphv}
We have the following {\suphvness} properties for $\C P^n$:
\begin{itemize}
    \item The Chekanov-type torus $T^n _{\Chek}$ is $\zeta_{\C}$-superheavy. Moreover, for $n=2$, it is also $\zeta_{\Z/7}$-superheavy.

    \item $\R P^n$ is $\zeta_{\Z/2}$-superheavy.
\end{itemize}
\end{prop}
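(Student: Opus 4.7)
The plan is to verify the hypotheses of Theorem \ref{CO map heavy} (the \FOOO\ closed-open criterion) in each case: if $HF(L;\K) \neq 0$ for a suitable local system, and if the idempotent $e = 1_\K \in QH(\C P^n; \K)$ satisfies $\mathcal{CO}^0(e) \neq 0$, then $L$ is $e$-heavy. Because $\mathcal{CO}^0$ is a unital ring homomorphism, $\mathcal{CO}^0(1_\K) = 1_{HF(L;\K)}$, so the criterion collapses to non-vanishing of $HF(L;\K)$. To upgrade heaviness to superheaviness I would invoke Remark \ref{hv and suphv}: for every field $\K$ the ring $QH(\C P^n; \K) = \K[x,t]/(x^{n+1} = t)$ is itself a field, so $\zeta_\K$ is a genuine homogeneous quasimorphism and heavy and superheavy coincide. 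Thus the whole proposition is reduced to showing non-vanishing of $HF(L;\K)$ in the three cases at hand.

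For $\R P^n \subset \C P^n$ over $\Z/2$, this is classical: $\R P^n$ is a monotone Lagrangian and Oh's computation identifies $HF^*(\R P^n; \Z/2) \cong H^*(\R P^n; \Z/2) \otimes_{\Z/2} \Lambda_{\Z/2}$, which is non-zero. Applying Theorem \ref{CO map heavy} with $e = 1_{\Z/2}$ and combining with Remark \ref{hv and suphv} gives $\zeta_{\Z/2}$-superheaviness of $\R P^n$.

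For the Chekanov-type torus $T^n_\Chek \subset \C P^n$, I would treat it as a monotone Lagrangian torus in the \FOOO\ framework and use the standard equivalence: the Floer cohomology of $T^n_\Chek$ twisted by a local system $y \in (\K^*)^n$ is non-zero if and only if $y$ is a critical point of the disk-count superpotential $W_\Chek \in \K[y_1^{\pm 1}, \ldots, y_n^{\pm 1}]$, which in the monotone case is the Laurent polynomial recording Maslov-$2$ disk counts. For $\zeta_\C$-superheaviness one needs a critical point of $W_\Chek$ in $(\C^\ast)^n$, which follows from the explicit enumeration of Maslov-$2$ disks on $T^n_\Chek$ set up in Section \ref{good torus}. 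For the special $n=2$ statement over $\Z/7$, one would take the same Laurent polynomial for $T^2_\Chek$, clear denominators, reduce modulo $7$, and exhibit a root of the Jacobian system in $(\overline{\F_7})^*$; the specific prime $7$ is presumably forced as a divisor of the discriminant of the critical locus. Once such a root is produced, Theorem \ref{CO map heavy} applies with $\K = \Z/7$.

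The main obstacle is the characteristic-$7$ step. Over $\C$, the existence of critical points of $W_\Chek$ and the resulting non-vanishing of $HF(T^n_\Chek;\C)$ are well understood, and the $\R P^n$ case follows immediately from the Oh computation. However, verifying that $W_\Chek$ for $T^2_\Chek$ has a critical point over an algebraic closure of $\F_7$ at which the Floer cohomology is non-zero requires an explicit polynomial computation using the Maslov-$2$ disk enumeration from Section \ref{good torus}, together with a check that this critical point descends to the required local system on $T^2_\Chek$ over $\Z/7$.
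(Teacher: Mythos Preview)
Your approach is essentially identical to the paper's: both invoke the unitality of $\mathcal{CO}^0$ together with Theorem \ref{CO map heavy} to reduce everything to the non-vanishing of $HF(L;\K)$, and then upgrade heaviness to superheaviness via Remark \ref{hv and suphv} using that $QH(\C P^n;\K)$ is a field. For the obstacle you flag in characteristic~$7$, the paper does not carry out the superpotential computation either---it simply cites \cite[Section 5.2.5]{Smi17} for $HF(T^2_{\Chek};\Z/7)\neq 0$; likewise the $\C$-coefficient non-vanishing for $T^n_{\Chek}$ is imported from the superpotential calculation in \cite{CHW}, recorded here as part of Proposition \ref{disjoint torus}.
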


\begin{proof}[Proof of {\propo} \ref{C and Z2 suphv}]
The closed-open maps
\begin{equation*}
    \begin{gathered}
        \mathcal{CO}^0 : QH(\C P^n; \C ) \to HF(T^n _{\Chek};\C),\\
        \mathcal{CO}^0 : QH(\C P^2; {\Z/7} ) \to HF(T^2 _{\text{Ch}};\Z/7),\\
         \mathcal{CO}^0 : QH(\C P^n; {\Z/2}) \to HF(\R P^n;\Z/2),
    \end{gathered}
\end{equation*}
 satisfy
\begin{equation*}
    \begin{gathered}
        \mathcal{CO}^0 (1_\C) = 1_{T^n _{\Chek};\C},\\
          \mathcal{CO}^0 (1_{\Z/7}) = 1_{T^2 _{\text{Ch}};\Z/7},\\
         \mathcal{CO}^0 (1_{\Z/2}) = 1_{\R P^n}.
    \end{gathered}
\end{equation*}

\begin{remark}
    Note that the Floer homology of the Chekanov-type torus is non-zero for $\C$-{\coeff}s, see {\propo} \ref{disjoint torus}. For the case of $n=2$ with $\Z/7$-{\coeff}s, see \cite[Section 5.2.5]{Smi17}.
\end{remark}

Thus, by {\thm} \ref{CO map heavy}, $T^n _{\Chek}$ is $1_\C$-heavy and $\R P^n$ is $1_{\Z/2}$-heavy. Moreover, as $QH(\C P^n; \K)$ is a field, we have that $T^n _{\Chek}$ is $1_\C$-superheavy and $\R P^n$ is $1_{\Z/2}$-superheavy (see Remark \ref{hv and suphv}). Similarly, for $n=2$, $T^2 _{\Chek}$ is also $1_{\Z/7}$-{\suphv}.

\end{proof}

\begin{prop}\label{Z2vsC}
For $\C P^n$, we have
$$ \zeta_{\Z/2} \neq \zeta_{\C} .$$
Moreover, for $n=2$, we also have
$$ \zeta_{\Z/2} \neq \zeta_{\Z/7} . $$
\end{prop}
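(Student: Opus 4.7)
The strategy is to exploit Proposition \ref{suphv constant}: two disjoint subsets of $(M,\om)$ cannot be $e$-superheavy for the same idempotent $e$. Combined with Proposition \ref{C and Z2 suphv}, which distributes the superheaviness property of $T^n_{\Chek}$ and $\R P^n$ among different coefficient fields, we plan to derive the inequality of the corresponding {\EP} quasimorphisms by producing a single test Hamiltonian on which they take distinct values.

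For the first statement, I would proceed as follows. First, verify that $T^n_{\Chek}$ and $\R P^n$ are disjoint submanifolds of $\C P^n$; this should follow directly from the explicit construction of the Chekanov-type torus recalled in Section \ref{good torus}, since it is built in a neighborhood of a point (or of a complex line) placed away from the standard real locus. Second, using disjointness, choose a time-independent smooth Hamiltonian $H \in C^\infty(\C P^n)$ with $H|_{T^n_{\Chek}} \equiv 0$ and $H|_{\R P^n} \equiv 1$. By Proposition \ref{C and Z2 suphv} the torus $T^n_{\Chek}$ is $1_\C$-superheavy and $\R P^n$ is $1_{\Z/2}$-superheavy, so Proposition \ref{suphv constant} forces
\begin{equation*}
    \zeta_{\C}(H) = 0 \quad \text{and} \quad \zeta_{\Z/2}(H) = 1,
\end{equation*}
yielding $\zeta_{\C} \neq \zeta_{\Z/2}$.

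For the second statement with $n=2,$ the same scheme applies verbatim but with $\Z/7$-coefficients playing the role of $\C$-coefficients: by Proposition \ref{C and Z2 suphv}, the Chekanov torus $T^2_{\Chek}$ is simultaneously $1_{\Z/7}$-superheavy while $\R P^2$ remains $1_{\Z/2}$-superheavy, and the same separating Hamiltonian $H$ (or rather its restriction/extension to $\C P^2$) gives $\zeta_{\Z/7}(H)=0 \neq 1 = \zeta_{\Z/2}(H)$, whence $\zeta_{\Z/2} \neq \zeta_{\Z/7}$.

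The main obstacle is really the disjointness of $T^n_{\Chek}$ from $\R P^n$, since once this geometric input is in hand the rest is a one-line application of Proposition \ref{suphv constant}. The construction of Chekanov-type tori in $\C P^n$ used in this paper is presumably arranged precisely so that this disjointness is manifest (for instance, as a small Lagrangian torus supported near a point in $\C P^n \setminus \R P^n$, or via an almost-toric/Biran-type construction localized in a chart avoiding the real locus); I would simply cite or briefly verify this from Section \ref{good torus}. No further Floer-theoretic input is needed beyond what Proposition \ref{C and Z2 suphv} already provides.
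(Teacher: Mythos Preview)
Your proposal is correct and follows essentially the same approach as the paper: both use the disjointness of $T^n_{\Chek}$ from $\R P^n$ (established in Proposition \ref{disjoint torus}) together with Proposition \ref{C and Z2 suphv} and Proposition \ref{suphv constant}. The only cosmetic difference is that the paper phrases the conclusion as ``$\R P^n$ is not $1_\C$-superheavy, hence $\zeta_{\Z/2} \neq \zeta_\C$'', while you unpack this by explicitly exhibiting a separating test Hamiltonian on which the two functionals differ; this is exactly the content of the proof of Proposition \ref{suphv constant}, so the arguments are the same.
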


\begin{proof}[Proof of {\propo} \ref{Z2vsC}]
By {\propo} \ref{disjoint torus} which we will discuss later, $\R P^n$ and $T^n _{\Chek}$ are disjoint, thus by {\propo} \ref{suphv constant} and {\propo} \ref{C and Z2 suphv}, we conclude that $\R P^n$ is not $1_\C$-superheavy (nor $1_{\Z/7}$-superheavy for the $n=2$ case). Thus, we have 
 $$ \zeta_{\Z/2} \neq \zeta_{\C} $$
and, for $n=2$, we also have
$$ \zeta_{\Z/2} \neq \zeta_{\Z/7} . $$
\end{proof}

\begin{remark}
    For $n=2$, we do not know whether or not we have 
    $$\zeta_{\C} = \zeta_{\Z/7} .$$
\end{remark}

\begin{proof}[Proof of Theorem \ref{main CPn}]
    We define a map 
    \begin{equation*}
        \mu: \wt{\Ham} (\C P^n) \to \R 
    \end{equation*}
    as follows:
    \begin{equation}\label{def of mu}
        \mu := \zeta_{\C} -  \zeta_{\Z/2} .
    \end{equation}

The definition \eqref{def of mu} of the function $\mu$ uses {\specinv}s coming from Floer homology over ground fields of different characteristics, and thus we are unable to compare them directly. In order to overcome this issue, we prove the following result.

\begin{prop}\label{key obs}
Let $R$ and $R'$ be rings. Let $j\colon QH(M;R) \to QH(M;R') $ be the map induced by a homomorphism $j: R \xrightarrow[]{} R'.$

Then, we have
\begin{equation*}
    c (H, j(a) ; R' )  \leq   c  ( H , a ; R )
\end{equation*}
for every {\hamil} $H$ and $a \in QH(M;R)$.
\end{prop}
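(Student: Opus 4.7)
The plan is to prove Proposition \ref{key obs} by constructing a chain map between the filtered Floer complexes over the two coefficient rings induced by the homomorphism $j\colon R \to R'$, and checking that this map is compatible with both the filtration inclusions and the PSS isomorphism. The inequality then falls out of a simple diagram chase from the definition \eqref{eq:spec inv} of spectral invariants.

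First, I would assume $H$ is non-degenerate; the general case follows from Hofer continuity \eqref{Hofer continuity property}. Recall that the Floer chain complex $CF(H;R) = CF(H;\Lambda_R^{\downarrow})$ is freely generated over $\Lambda_R^{\downarrow}$ by (equivalence classes of) capped 1-periodic orbits of $H$, filtered by the action functional $\mathcal{A}_H$. The homomorphism $j\colon R \to R'$ induces a ring homomorphism $\tilde{j}\colon \Lambda_R^{\downarrow} \to \Lambda_{R'}^{\downarrow}$ by applying $j$ coefficient-by-coefficient, which preserves the valuation on Novikov/Laurent coefficients. Tensoring, we obtain a $\Lambda_{R'}^{\downarrow}$-linear chain map
\[
\tilde{j}_*\colon CF(H;R) \otimes_{\Lambda_R^{\downarrow}} \Lambda_{R'}^{\downarrow} \longrightarrow CF(H;R').
\]
The key point is that $\tilde{j}_*$ is a chain map because the Floer differential counts the same moduli spaces of isolated Floer trajectories in both settings, with the structure coefficients mapped via $j$. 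Since $\tilde{j}_*$ preserves the action filtration (it acts as $j$ on each generator and respects Novikov valuations), it restricts to $\tilde{j}_*\colon CF^\tau(H;R) \to CF^\tau(H;R')$ for every $\tau \in \R$, and commutes with the chain-level inclusions.

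The next step is to compare this with the PSS isomorphism. The PSS map is likewise defined by counting zero-dimensional moduli of spiked pseudo-holomorphic spheres, so the same reasoning yields a commutative diagram
\begin{equation*}
\begin{CD}
QH(M;R) @>{PSS_{H;R}}>> HF(H;R) @<{i_\tau^R}<< HF^\tau(H;R) \\
@V{j}VV @V{\tilde{j}_*}VV @V{\tilde{j}_*}VV \\
QH(M;R') @>{PSS_{H;R'}}>> HF(H;R') @<{i_\tau^{R'}}<< HF^\tau(H;R')
\end{CD}
\end{equation*}
in which both squares commute. Now suppose $\tau \in \R$ is such that $PSS_{H;R}(a) \in \Im(i_\tau^R)$, say $PSS_{H;R}(a) = i_\tau^R(x)$ for some $x \in HF^\tau(H;R)$. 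Applying $\tilde{j}_*$ and using commutativity of the diagram gives
\[
PSS_{H;R'}(j(a)) \;=\; \tilde{j}_*\bigl(PSS_{H;R}(a)\bigr) \;=\; \tilde{j}_*\bigl(i_\tau^R(x)\bigr) \;=\; i_\tau^{R'}\bigl(\tilde{j}_*(x)\bigr),
\]
so $PSS_{H;R'}(j(a)) \in \Im(i_\tau^{R'})$, which means $c(H, j(a); R') \leq \tau$. Taking the infimum over all admissible $\tau$ yields $c(H, j(a); R') \leq c(H, a; R)$.

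The main (though modest) obstacle is verifying that the chain-level map $\tilde{j}_*$ is well-defined and compatible with PSS; this is essentially a naturality statement, but it requires that the Floer and PSS differentials are defined by counting moduli spaces whose algebraic count takes values in $\Z$ (or at worst in $R$ independently of the ring), so that extension of scalars via $j$ is harmless. In the monotone (and semi-positive) case covered by the paper this is standard \cite{[MS04]}; outside this setting, the same argument goes through for any construction in which the Floer structure maps arise as base change of an underlying integral count.
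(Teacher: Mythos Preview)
Your proof is correct and follows essentially the same approach as the paper: you set up the commutative diagram relating the filtered Floer homologies and PSS maps over $R$ and $R'$ via the coefficient change $j$, and then perform the identical diagram chase. The paper also records a second, chain-level version (pick a cycle realizing $c(H,a;R)$ and push it forward through $j$), but this is just a reformulation of the same idea.
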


\begin{proof}[Proof of {\propo} \ref{key obs}]
It is enough to prove the case where $j(a)\neq 0$ and a non-{\degen} $H$. By abuse of notation, we also denote by $j\colon HF ^{\tau}(H;R) \to HF ^{\tau}(H;R')$, the morphism that the map $j: R \xrightarrow[]{} R'$ induces on Floer homology. Consider the diagram
   
\begin{equation}\label{diagram coeff}
\begin{tikzcd}
     HF ^{\tau}(H;R) \arrow{d}{j} \arrow{r}{i^\tau _\ast} &  HF_\ast (H;R)  \arrow{d}{j} &   \arrow{l}{PSS_{H;R}  } QH (M; R)  \arrow{d}{j}  \\
     HF ^{\tau}(H;R') \arrow{r}{i^\tau _\ast}  & HF (H;R')  &  \arrow{l}{PSS_{H;R'} } QH (M;R') . 
\end{tikzcd}
\end{equation}
As $j$ preserves the action filtration, the diagram commutes. We can now prove the statement directly by a diagram chase. Namely, if $a' = PSS_{H;R}(a)$ is the image $a' = i^{\tau}_*(a'')$ of some $a'' \in HF ^{\tau}(H;R),$ then $b' = j(a') = PSS_{H,R'}(j(a))$ satisfies $b' = j(a') = j \circ i^{\tau}_*(a'') = i^{\tau}_*(b'')$ for $b'' = j(a'') \in HF ^{\tau}(H;R).$ Hence $c (H, j(a) ; R' )  \leq   c  ( H , a ; R )$ by definition, see \eqref{eq:spec inv}. However, we find the following chain-level approach more instructive.

Fix any class $a \in QH(M;R)$. Take a cycle $\sum_i a_i \cdot \wt{z_i} \in CF(H;R)$ that realizes the {\specinv} $ c  ( H , a ; R )$, i.e.
\begin{equation}
    \begin{gathered}
        c  ( H , a ; R ) = \mathcal{A}_{H} (\sum_i a_i \cdot \wt{z_i}),\\
        [\sum_i a_i \cdot \wt{z_i}] = PSS_{H;R} (a) .
    \end{gathered}
\end{equation}
The cycle $\sum_i j (a_i) \cdot \wt{z_i} \in CF(H;R')$ satisfies 
\begin{equation}
    \begin{gathered}
       \mathcal{A}_{H} (\sum_i j(a_i) \cdot \wt{z_i})  = \mathcal{A}_{H} (\sum_i a_i \cdot \wt{z_i})=  c  ( H , a ; R ),\\
        [\sum_i j(a_i) \cdot \wt{z_i}] = PSS_{H;R} (j(a)) ,
    \end{gathered}
\end{equation}
so we have
\begin{equation*}
    c (H, j(a) ; R' )  \leq   c  ( H , a ; R ).
\end{equation*}
\end{proof}

Consider the natural embedding
\begin{equation}
    \begin{gathered}
        \Z \xrightarrow{} \C,\\
        k \mapsto k
    \end{gathered}
\end{equation}
and the natural projection
\begin{equation}
    \begin{gathered}
         \Z \xrightarrow{} \Z/2,\\
        k \mapsto [k] .
    \end{gathered}
\end{equation}
From {\propo} \ref{key obs}, we get 
\begin{equation}
    \begin{aligned}
        \mu (  \wt{\phi} ) 
        &= \zeta_{\C} (  \wt{\phi} )  -  \zeta_{\Z_2} (  \wt{\phi} ) \\
        & = \zeta_{\C} (\wt{\phi}  ) +   \zeta_{\Z_2} (\wt{\phi}^{-1} ) \\ 
        & \leq \zeta_{\Z} (\wt{\phi}  ) + \zeta_{\Z} (\wt{\phi}^{-1} )   \\
        &= \overline{\gamma}_\Z ( \wt{\phi} )\\
        & \leq  \gamma_\Z ( \wt{\phi} ) .
    \end{aligned}
\end{equation}
 Thus, we obtain 
\begin{equation}\label{mu leq gamma}
     |\mu| \leq \gamma_\Z .
\end{equation}

\begin{claim}\label{EPP question for CPn}
The map $\mu \colon \wt{\Ham}(\C P^n) \to \R$ descends to a map $\mu \colon \Ham (\C P^n) \to \R$ and defines a non-trivial {\homo} {\qmor}. Moreover, it is locally $C^0$-Lipschitz continuous and Hofer-Lipschitz continuous.
\end{claim}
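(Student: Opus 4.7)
The plan is to verify the four listed properties of $\mu$ in sequence: descent to $\Ham(\C P^n)$, the homogeneous quasimorphism property, non-triviality, and the two continuity assertions. The first four are essentially formal consequences of material already set up in the paper; the $C^0$-Lipschitz assertion is the step that genuinely requires input from outside.

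\textbf{Descent.} Both $\zeta_{\C}$ and $\zeta_{\Z/2}$ are homogeneous quasimorphisms on $\wt{\Ham}(\C P^n)$ by Theorem \ref{EP qmor}, so the same is true for $\mu$. I use the general fact that the restriction of any homogeneous quasimorphism to an abelian subgroup is a genuine homomorphism. The kernel of the projection $\wt{\Ham}(\C P^n) \to \Ham(\C P^n)$ is the central subgroup $\pi_1(\Ham(\C P^n)) \cong \Z/(n+1)$ (a classical computation), which is finite, so the restriction of $\mu$ to it is the zero homomorphism. For any loop $\gamma \in \pi_1$ and any $\wt\phi$, centrality and homogeneity give $n\,\mu(\wt\phi\gamma) = \mu((\wt\phi\gamma)^n) = \mu(\wt\phi^n\gamma^n)$, and the quasimorphism defect bound combined with $\mu(\gamma^n)=n\mu(\gamma)=0$ yields $n|\mu(\wt\phi\gamma)-\mu(\wt\phi)| \leq D$ for every $n$, forcing $\mu(\wt\phi\gamma)=\mu(\wt\phi)$. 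Hence $\mu$ is constant on the fibres of $\wt{\Ham}(\C P^n) \to \Ham(\C P^n)$ and descends.

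\textbf{Homogeneous quasimorphism.} Homogeneous quasimorphisms form a real vector space under pointwise operations, so the difference $\mu = \zeta_{\C} - \zeta_{\Z/2}$ is again a homogeneous quasimorphism.

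\textbf{Non-triviality.} Use the two superheaviness results already in hand. By Proposition \ref{C and Z2 suphv}, $T^n_{\Chek}$ is $\zeta_{\C}$-superheavy and $\R P^n$ is $\zeta_{\Z/2}$-superheavy; Proposition \ref{disjoint torus} says they are disjoint. Pick $H\in C^\infty(\C P^n)$ time-independent with $H|_{\R P^n}\equiv 0$ and $H|_{T^n_{\Chek}}\equiv 1$. Proposition \ref{suphv constant} then gives $\zeta_{\Z/2}(H)=0$ and $\zeta_{\C}(H)=1$, so $\mu(H)=1$, proving $\mu \not\equiv 0$.

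\textbf{Continuity.} The Hofer-Lipschitz bound is immediate: by Remark \ref{EP and Hofer-Lipschitz conti} each $\zeta_\K$ is Hofer-Lipschitz, so the difference is too. For the local $C^0$-Lipschitz assertion, the starting point is the bound
\[ |\zeta_\K(\wt\phi)| \leq \gamma_\K(\wt\phi), \]
obtained from the triangle inequality applied to $\wt\phi^k$ and $\wt\phi^{-k}$ together with homogeneity. Combining this with the $C^0$-continuity of the spectral invariant $c_\K(\cdot,1)$ on $\C P^n$ over a field (and the uniform bound $\gamma_\K\leq 1$ of \eqref{C CPn}) shows each $\zeta_\K$ is locally $C^0$-Lipschitz, and taking the difference transfers the property to $\mu$. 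The main obstacle here is precisely this last input: unlike the Hofer-Lipschitz bound, the $C^0$-Lipschitz estimate does not follow from the quasimorphism axioms and has to be imported from the $C^0$-rigidity theory of spectral invariants over field coefficients.
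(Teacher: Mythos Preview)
Your arguments for descent, the homogeneous quasimorphism property, non-triviality, and Hofer-Lipschitz continuity are all correct and essentially agree with the paper (the paper is terser on descent, deferring to \cite{[EP03]}, while you spell out the $\pi_1$-argument explicitly).

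The $C^0$-continuity step, however, has a genuine gap. The inequality you write as your ``starting point'',
\[
|\zeta_{\K}(\wt\phi)| \leq \gamma_{\K}(\wt\phi),
\]
is false. Indeed $\zeta_{\K}$ is a \emph{non-trivial} homogeneous quasimorphism on $\Ham(\C P^n)$ and is therefore unbounded, whereas $\gamma_{\K}\leq 1$ everywhere by \eqref{C CPn}. What the triangle inequality and homogeneity actually give is $-c_{\K}(\overline H,1)\leq \zeta_{\K}(H)\leq c_{\K}(H,1)$; the endpoints of this interval need not be nonnegative, so you cannot pass to $\gamma_{\K}$. Consequently your route of proving each $\zeta_{\K}$ separately $C^0$-continuous and then taking the difference does not go through as written.

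The paper's approach avoids this by working directly with the \emph{difference} $\mu$. The already-established inequality \eqref{mu leq gamma}, namely $|\mu|\leq \gamma_{\Z}$, is the correct replacement: here the two unbounded terms $\zeta_{\C}$ and $-\zeta_{\Z/2}$ are combined \emph{before} passing to a spectral norm, and the resulting bound is by $\gamma_{\Z}$ rather than by any $\gamma_{\K}$. The paper then invokes the result of \cite{Kaw22B} (Theorem \ref{my lemma}) that $\gamma_{\Z}$---although globally unbounded by Theorem \ref{main CPn}---is bounded on a $C^0$-neighborhood of the identity, and concludes $C^0$-continuity of $\mu$ via Shtern's theorem (Theorem \ref{shtern}). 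You do not invoke Shtern at all, which is the mechanism turning ``bounded near the identity'' into continuity for a homogeneous quasimorphism.
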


\begin{proof}[Proof of Claim \ref{EPP question for CPn}]
Recall the following two results.

\begin{theo}[{\cite[Theorem 4(1)]{Kaw22B}}]\label{my lemma}
Let $(M,\omega)$ be a monotone symplectic manifold. Fix a ring $R$ (i.e. a commutative ring with unit). For any $\varepsilon>0$, there exists $\delta>0$ such that if $d_{C^0}( \id,\phi_H)<\delta$, then 
$$\gamma_R (H)<\frac{\dim_{\R} (M)}{N_M}\cdot \lambda_0 +\varepsilon$$
where $N_M$ denotes the minimal Chern number and $\lambda_0$ is the positive period of $\langle \omega , \pi_2(M) \rangle$, i.e. $\langle \omega , \pi_2(M) \rangle = \lambda_0 \cdot \Z$.\footnote{Note that in this paper, the Fubini--Study form $\omega_{\FS}$ is normalized so that $\lambda_0 = 1$ for $\C P^n$.}
\end{theo}

\begin{remark}
    Note that $\gamma_R$ denotes the spectral norm defined for Floer homology over the ground ring $R$; see \cite[Remark 5, 25]{Kaw22B}. In particular, {\thm} \ref{my lemma} applies to $\gamma_\Z$.
\end{remark}

\begin{theo}[{\cite{[Sht01]}, \cite[Proposition 1.3]{[EPP12]}}]\label{shtern}
Let $G$ be a topological group and $\mu:G\to \mathbb{R}$ a homogeneous quasimorphism. Then $\mu$ is continuous if and only if it is bounded on a neighborhood of the identity.
\end{theo}
Now, by \eqref{mu leq gamma}, {\thm}s \ref{my lemma} and \ref{shtern}, we conclude that $\mu: \Ham (\C P^n) \to \R$ is a {\homo} {\qmor} that is $C^0$-continuous. It is non-trivial by {\propo} \ref{Z2vsC}. The function $\mu$ is also Hofer-Lipschitz continuous; $\zeta_{\Z_2}$ and $\zeta_{\C}$ are both {\EP} {\qmor}s and thus are Hofer-Lipschitz continuous (see Remark \ref{EP and Hofer-Lipschitz conti}). We have proven Claim \ref{EPP question for CPn}.
\end{proof}

Now, we complete the proof of {\thm} \ref{main CPn}. From the non-triviality in Claim \ref{EPP question for CPn}, there exists $\phi \in \Ham (\C P^n)$ such that
\begin{equation*}
    \mu (\phi ) \neq 0 .
\end{equation*}
By the homogeneity of $\mu$, we have

\begin{equation*}
    \begin{aligned}
        \gamma_{\Z} ( \phi ^k) & \geq  |\mu ( \phi ^k) | =  k \cdot |\mu (\phi ) |, 
    \end{aligned}
\end{equation*}
hence,
\begin{equation*}
    \begin{aligned}
        \lim_{k \to +\infty} \gamma_{\Z} ( \phi ^k) = +\infty .
    \end{aligned}
\end{equation*}

Thus, we have proven that
$$ \sup_{\phi \in \Ham (\C P^n)} \gamma_\Z (\phi) = +\infty . $$

We can also obtain 
$$ \sup_{\phi \in \Ham (\C P^2)} \gamma_{\Z/14} (\phi) = +\infty  $$ in the same manner. Indeed, we define
$\mu' \colon \Ham (\C P^2) \to \R$ by 
\begin{equation*}
    \mu' := \zeta_{\Z/2} - \zeta_{\Z/7} .
\end{equation*}
 By {\propo} \ref{key obs}, we obtain 
 \begin{equation*}
     |\mu'| \leq \gamma_{\Z/14} 
 \end{equation*}
 where we use
 \begin{equation}
    \begin{gathered}
        \zeta_{1_{\Z/2}} \leq \zeta_{1_{\Z/14}},\\
        \zeta_{1_{\Z/7}} \leq \zeta_{1_{\Z/14}}.
    \end{gathered}
\end{equation}
which comes from the natural monomorphisms
 \begin{equation}
    \begin{gathered}
        \Z/14 \hookrightarrow \Z/2,\\
        \Z/14 \hookrightarrow \Z/7.
    \end{gathered}
\end{equation}
Thus, 
\begin{equation}
    \begin{aligned}
        \mu(\wt{\phi})
        &:=  \zeta_{1_{\Z/2}} (\wt{\phi} ) - \zeta_{1_{\Z/7}} (\wt{\phi}) \\
        & \leq \zeta_{1_{\Z/2}} (\wt{\phi} ) + \zeta_{1_{\Z/7}} (\wt{\phi}^{-1}) \\
        & \leq \zeta_{1_{\Z/14}} (\wt{\phi} ) + \zeta_{1_{\Z/14}} (\wt{\phi}^{-1}) \\
        & = \overline{\gamma}_{\Z/14}(\wt{\phi} ) \\
        & \leq \gamma_{\Z/14} (\wt{\phi} ) 
    \end{aligned}
\end{equation}
and it follows that 
$$ \sup_{\phi \in \Ham (\C P^2)} \gamma_{\Z/14} (\phi) = +\infty  $$
by using the non-triviality and homogeneity of $\mu'$. We have proven {\thm} \ref{main CPn}.
\end{proof}

\subsection{Chekanov-type torus in $\C P^n$}\label{good torus}

The aim of this section is to show that there is a monotone {\lag} torus in $\C P^n$ with $n>1$ that has non-zero Floer homology and is disjoint from $\R P^n$. The existence of a {\lag} torus with these properties was used in the proof of {\thm} \ref{main CPn} in Section \ref{Proof of main thm}. Such a torus was known for $n=2,3$ by Oakley--Usher \cite{OU16}, but is new for $n>3$. We appeal to the recent work of Chanda--Hirschi--Wang \cite{CHW}.

\begin{prop}\label{disjoint torus}
In $(\C P^n, \omega_{\FS})$ with $n>1$, there is a monotone {\lag} torus $T^n _{\Chek}$ that has the following two properties:
\begin{itemize}
    \item It has non-zero Floer homology $HF(T^n _{\Chek})\neq 0$.
    \item It is disjoint from $\R P^n$, i.e. $T^n _{\Chek} \cap \R P^n = \emptyset$.
\end{itemize}
\end{prop}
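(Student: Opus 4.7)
For $n = 2, 3$ the statement follows from Oakley--Usher \cite{OU16}, so I focus on the case $n \geq 4$. The plan is to realize $T^n _{\Chek}$ as the Biran circle-bundle lift of a monotone Lagrangian torus $L$ lying in a smooth divisor $Q \subset \C P^n$ that is itself disjoint from $\R P^n$, and to transfer the non-vanishing of Floer homology from $L$ to its lift via the results of Chanda--Hirschi--Wang \cite{CHW}.

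The key geometric input is the choice of divisor. I take the Fermat quadric
\[ Q = \bigl\{ [z_0:\cdots:z_n] \in \C P^n : z_0^2 + \cdots + z_n^2 = 0 \bigr\}. \]
This is a smooth monotone symplectic hypersurface, and for any non-zero real tuple $(x_0,\ldots,x_n) \in \R^{n+1}$ one has $\sum_i x_i^2 > 0$, so $Q \cap \R P^n = \emptyset$. Since $Q$ and $\R P^n$ are disjoint compact subsets of $\C P^n$, $Q$ admits a tubular neighborhood $U$ disjoint from $\R P^n$; the standard Biran symplectic neighborhood of $Q$ can be arranged to lie inside $U$, so that every monotone Biran lift of a monotone Lagrangian in $Q$ is automatically disjoint from $\R P^n$.

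It then remains to find a monotone Lagrangian torus $L \subset Q$ with $HF(L;\C) \neq 0$ and to verify that the Biran lift $\Gamma_L$, which I take as $T^n _{\Chek}$, inherits non-vanishing Floer homology. The $(n-1)$-dimensional quadric $Q$ carries well-studied monotone Lagrangian tori (for instance via a Gelfand--Cetlin fibration or a toric degeneration) with explicit Landau--Ginzburg potentials admitting non-degenerate critical points over $\C$. The main obstacle is the second step: the theorem of \cite{CHW} expresses the superpotential of $\Gamma_L$ in terms of the superpotential of $L$ plus corrections from relative Maslov-$2$ disks crossing the divisor, and yields $HF(\Gamma_L; \C) \neq 0$ provided that a critical point of the base potential lifts to a critical point of the corrected potential. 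Verifying this lifting condition for an appropriate $L \subset Q$ is the only explicit computation required; once it is in place, $T^n _{\Chek}$ has both required properties.
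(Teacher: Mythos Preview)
Your approach is \emph{genuinely different} from the paper's and is, in fact, precisely the alternative route the authors flag in a Remark immediately preceding their proof. The paper does \emph{not} use the Biran circle-bundle lift over the quadric. Instead it uses the symplectic reduction construction of Chanda--Hirschi--Wang \cite{CHW}: one takes a regular fiber $F_n$ of the moment map $\mu_n:\C P^n \to \R^{n-2}$, forms the quotient $q:F_n \to F_n/T^{n-2} \simeq \C P^2$, and sets $T^n_{\Chek}:=\overline{T}_{(1,1,2)}:=q^{-1}(T_{(1,1,2)})$, the preimage of the Chekanov torus in $\C P^2$. Disjointness from $\R P^n$ reduces to the identity $q(\R P^n \cap F_n)=\R P^2$ together with the Oakley--Usher fact $T_{(1,1,2)}\cap \R P^2=\emptyset$; non-vanishing of Floer homology is read off directly from the superpotential computed in \cite[Example 4.2]{CHW}. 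The virtue of this route is that both ingredients are already in the literature and no new computation is needed.

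Your sketch has two issues worth flagging. First, a misattribution: \cite{CHW} does not compute superpotentials of Biran circle-bundle lifts; their lifting construction is the symplectic reduction one above. The tools relevant to your approach are the Floer--Gysin sequence of Biran--Khanevsky \cite{BK13} and the in-preparation work \cite{DTVW}, exactly as the paper's Remark indicates. Second, and more substantively, you defer the one step that carries all the content: verifying that a critical point of the base superpotential on $L\subset Q$ survives the divisor-correction terms and yields a critical point for $\Gamma_L$. This is not a formality, and until it is carried out your proof is incomplete. By contrast, the paper's reduction to $\C P^2$ packages this step into an already-published computation. Your disjointness argument, on the other hand, is fine and arguably cleaner than you present it: in the Biran decomposition of $(\C P^n, Q)$ the isotropic skeleton is exactly $\R P^n$, so \emph{any} circle-bundle lift at radius strictly less than the maximal one lies in $\C P^n\setminus \R P^n$ automatically.
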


\begin{remark}
     We do not know whether or not, in the $n=3$ case, the monotone {\lag} torus satisfying properties of {\propo} \ref{disjoint torus} from \cite{OU16} is {\hamil} isotopic to the Chekanov-type torus $T^3 _{\Chek}$ that we consider here.
\end{remark}

\begin{remark}\label{chek tori var}
    There are several constructions of exotic {\lag} tori in $\C P^n$ with $n>2$, e.g. \cite{CS10,PT20,Yua22,CHW}. The {\lag} tori from \cite{CS10,PT20,Yua22} all satisfy the first property, namely the non-vanishing of Floer homology, but it seems unlikely that they satisfy the second property in {\propo} \ref{disjoint torus}. In fact, when $n$ is odd, we can show that $\R P^n $ intersects the {\lag} tori from \cite{CS10,PT20,Yua22}\footnote{We thank Sohan Chanda for pointing this out.}; the {\lag} tori from \cite{CS10,PT20,Yua22} are obtained as a mutation of the Clifford torus. Chanda \cite[{\thm} 1.2]{Ch} (which is stated for $\C$-{\coeff}s, but hold also for $\Z/2$-{\coeff}s) shows that shows that up to change of local systems, the intersection Floer homologies between $\R P^n$ and $T^n _{\Clif}$, and between $\R P^n$ and $T^n _{\Chek}$ do not change. According to Alston \cite{Als11}, the intersection Floer homologies between $T^n _{\Clif}$ and, between $\R P^n$ and $T^n _{\Clif}$ over $\Z/2$-{\coeff}s is non-zero when $n$ is odd, thus we have $HF(\R P^n,T^n _{\Chek};\Z/2 ) \neq 0$. This implies that $\R P^n$ and $T^n _{\Chek}$ intersect when $n$ is odd. We expect this to be also true when $n$ is even.
\end{remark}

\begin{remark}
We believe that there are other approaches to construct a monotone {\lag} torus as in {\propo} \ref{disjoint torus}. Indeed, the natural lift of a suitable {\lag} torus in the quadric hypersurface $Q^{n-1}$ to $\C P^n$ using the Biran decomposition \cite{Bir01} for the {\polar} $(\C P^n , Q^{n-1})$ could be another example generalizing \cite{OU16}. By construction, the resulting {\lag} torus in $\C P^n$ is disjoint from $\R P^n$ and its superpotential is computable by \cite{BK13}. See \cite{DTVW} for further work in this direction.
\end{remark}

\begin{proof}[Proof of {\propo} \ref{disjoint torus}]

First, we introduce the torus that we will consider. In fact, we will show that the \textit{lifted Chekanov torus} constructed in \cite{CHW}, denoted by $\overline{T}_{(1,1,2)}$, satisfies the requirements in {\propo} \ref{disjoint torus}, i.e. $T^n _{\Chek} := \overline{T}_{(1,1,2)}$. We review the relevant part from \cite{CHW}. In \cite{CHW}, \textit{lifted Vianna tori} $\overline{T}_{(a,b,c)}$ of Vianna tori $T_{(a,b,c)}$ were constructed in the following way.

First consider the following moment map:
\begin{equation*}
\begin{gathered}
   \mu_{n} : \C P^n \to \R^{n-2} \\
   z=[z_0:\dots :z_n] \mapsto \frac{1}{|z|^2} (|z_2|^2 ,\cdots, |z_n|^2) .
  \end{gathered}
\end{equation*}

Fix a fiber $F_n:= \mu_n ^{-1} \left( \frac{1}{n} (1,\cdots, 1) \right)$. The taking the {\symp} quotient of $F_n$, we get a symplectomorphism $F_n / T^{n-2} \simeq \C P^2$. We denote the induced quotient map by $q$:

\begin{equation}\label{symp quotient}
 q \colon  F_n  \to \C P^2.
\end{equation}

Now, for a Vianna torus $T_{(a,b,c)}$ in $\C P^2$ corresponding to a Markov pair $(a,b,c)$ (c.f. \cite{Via14}), take

\begin{equation*}
\overline{T}_{(a,b,c)}:= q^{-1} (T_{(a,b,c)}) 
\end{equation*}
and see this as a {\lag} in $\C P^n$ and call it the \textit{lifted Vianna torus}. In fact it turns out to be monotone \cite{CHW}.

In order to see that it is disjoint from $\R P^n$, we introduce an alternative description of the map $q \colon  F_n  \to \C P^2$ by using local coordinates:

Take a Darboux chart

\begin{equation*}
\begin{gathered}
   \psi_{n} : \{z \in \C P^n : z_{n} \neq 0 \} \to \C^{n} \\
   z=[z_0:\dots :z_n] \mapsto \frac{|z_n|}{|z|} (\frac{z_0}{z_n} ,\cdots, \frac{z_{n-1}}{z_n} ) .
  \end{gathered}
\end{equation*}
As the fiber $F_n$ is contained in $\{z \in \C P^n : z_{n} \neq 0 \}$, it makes sense to define

\begin{equation}\label{compose to get q}
\begin{gathered}
  F_n \to \C^n \backslash A \to \C^3 \backslash \{0\} \to \C P^2 \\
  z \mapsto \psi_n (z)=(w_1,\cdots, w_n) \mapsto (w_1,w_2,w_3) \mapsto [w_1:w_2:w_3] 
  \end{gathered}
\end{equation}
where $A:=\{z \in \C^n : |w_1|^2 + |w_2|^2 + |w_3|^2 >0 \}$. It is easy to see that the composition of \eqref{compose to get q} coincides with $q$ in \eqref{symp quotient}. From \eqref{compose to get q}, we see that the intersection of $\R P^n$ with $F_n$ is mapped to $\R P^2$ via $q$:
\begin{equation*}
q (\R P^n \cap F_n) = \R P^2 .
\end{equation*}

Thus, as $\overline{T}_{(1,1,2)} \subset F_n$, we have

\begin{equation}\label{eq1}
    \R P^n \cap  \overline{T}_{(1,1,2)}=\R P^n \cap F_n \cap \overline{T}_{(1,1,2)}
\end{equation}

and 

\begin{equation}\label{eq2}
    \begin{aligned}
        q(\R P^n \cap F_n \cap \overline{T}_{(1,1,2)}) &= q(\R P^n \cap F_n ) \cap  q (\overline{T}_{(1,1,2)}) \\
&= \R P^2 \cap T_{(1,1,2)} \\
&= \emptyset.
    \end{aligned}
\end{equation}
In the final line, we have used that $T_{(1,1,2)} $, which is precisely the Chekanov torus in $\C P^2$ \footnote{For this reason, we call $\overline{T}_{(1,1,2)}$ the Chekanov-type torus in $\C P^n$}, is disjoint from $\R P^2$, c.f. \cite{OU16}. By \eqref{eq1} and \eqref{eq2}, we conclude that the lifted Chekanov-type torus $\overline{T}_{(1,1,2)}$ is also disjoint from $\R P^n$, i.e.
\begin{equation*}
    \R P^n \cap  \overline{T}_{(1,1,2)} = \emptyset.
\end{equation*}

Finally, the non-vanishing of Floer homology is a direct consequence of the existence of a critical point of its {\suppot}, see \cite[Example 4.2]{CHW}. This completes the proof of {\propo} \ref{disjoint torus}.
\end{proof}

\subsection{Proofs for Section \ref{PD CPn} and {\thm} \ref{Z vs Q}}

We start with the proof of {\thm} \ref{Z vs Q} and use it to prove {\thm} \ref{hingston positive gamma estimate}.

\begin{proof}[Proof of {\thm} \ref{Z vs Q}] Note that by Hofer-continuity of spectral invariants, we can and will assume that $H$ is non-degerate.
We first prove $ c (H, a_\Q) \leq \inf_{k \in \N} c(H, k \cdot a_\Z )  $. Fix $k\in \N$ and take a Floer cycle $x:=\sum_{j}a_j \overline{z}_j \in CF(H;\Z)$ that represents $PSS_{H;\Z} (k \cdot a_\Z)$ and realizes the {\specinv} $c(H, k \cdot a_\Z)$, i.e. $[x]=PSS_{H;\Z} (k \cdot a_\Z) $ and $\mathcal{A}_H(x)= c(H, k \cdot a_\Z)$. Seeing $x$ as a cycle in $CF(H;\Q)$ instead of $CF(H;\Z)$, we can consider $\frac{1}{k} \cdot x \in CF(H;\Q)$. This cycle $\frac{1}{k}\cdot  x \in CF(H;\Q)$ represents the class $\frac{1}{k} \cdot PSS_{H;\Q} (k \cdot a_\Q) = PSS_{H;\Q} ( a_\Q)$ (c.f. \eqref{diagram coeff}), thus 
\begin{equation*}
    \begin{aligned}
        c (H, a_\Q) & \leq \mathcal{A}_H (\frac{1}{k}\cdot  x ) \\
        & = \mathcal{A}_H (  x ) \\
        & = c(H, k \cdot a_\Z) .
    \end{aligned}
\end{equation*}
This argument applies to any $k \in \N$ so we have proven $ c (H, a_\Q) \leq \inf_{k \in \N} c(H, k \cdot a_\Z )  $. 

We next show $\inf_{k \in \N} c(H, k \cdot a_\Z )  \leq  c (H, a_\Q) $. Take Floer cycles
\begin{equation}
    \begin{gathered}
        x :=\sum_{j}a_j \cdot \overline{z}_j \in CF(H;\Z), \\
        y :=\sum_{j} \frac{p_j}{q_j}  \cdot \overline{w}_j \in CF(H;\Q)
    \end{gathered}
\end{equation}
that represent $PSS_{H;\Z} (a_\Z)$ and $PSS_{H;\Q} (a_\Q)$ and realize the {\specinv}s $c(H, a_\Z)$ and $c(H, a_\Q)$, respectively, where $a_j \in \Z,\ p_j \in \Z,\ q_j \in \Z_{>0}$. Once again, seeing $x$ as a cycle in $CF(H;\Q)$ instead of $CF(H;\Z)$, it represents $a_\Q$, i.e. 
\begin{equation*}
    [x] = PSS_{H;\Q} (a_\Q).
\end{equation*}
As we also have $[y]= PSS_{H;\Q} (a_\Q) $, there is a chain $z \in CF(H;\Q)$ such that 
\begin{equation*}
    x-y = \partial z .
\end{equation*}
Write $\partial z \in CF(H;\Q)$ as
\begin{equation*}
    \partial z= \sum_{j } \frac{m_j}{n_j} \overline{w}' _j 
\end{equation*}
where ${m_j}, \in\Z,\ n_j \in \Z_{>0}$. Now, let $k:= lcm_{i,j} (q_i,n_j)$. Then, 
\begin{equation*}
\begin{aligned}
    x& = y + \partial z \\
    & = \sum_{j} \frac{p_j}{q_j}  \cdot \overline{w}_j + \sum_{j } \frac{m_j}{n_j} \cdot \overline{w}' _j  
\end{aligned}
\end{equation*}
and thus $ k \cdot y $ and $   k \cdot \partial z $ are chains in $CF(H;\Z)$ from our choice of $k$. The chain $ k \cdot y $ represents the class 
\begin{equation*}
    \begin{aligned}
        [k \cdot y] & = [ k \cdot (y  +   \partial z ) ] \\
        & = [k \cdot x] \\
        & = k \cdot PSS_{H;\Z} (a_\Z) \\
        & =PSS_{H;\Z} (k \cdot a_\Z) .
    \end{aligned}
\end{equation*}

Thus,
\begin{equation*}
    \begin{aligned}
         c(H, k \cdot a_\Z)  & \leq \mathcal{A}_H ( k \cdot y  ) \\
        & = \mathcal{A}_H ( y ) \\
        & = c(H, a_\Q) .
    \end{aligned}
\end{equation*}
Note that the last equality is a consequence of our choice of $y$. Thus, we have
$\inf_{k \in \N} c(H, k \cdot a_\Z )  \leq  c (H, a_\Q) $. This completes the proof of {\thm} \ref{Z vs Q}.
\end{proof}

\begin{remark}
The proof of Theorem \ref{Z vs Q} implies, that whenever $H$ is non-degenerate, the infimum $\inf_{k \in \N} c(H, k \cdot a_\Z )$ is attained at some $k \in \N$ for which $c(H, k \cdot a_\Z ) =  c (H, a_\Q).$
\end{remark}

\begin{remark}\label{CPn Z Q C}
 Furthermore, set $a_{\bK} = j(a_{\Z}),$ where $j: QH(M;\Z) \to QH(M;\bK)$ is the natural map for $\bK$ a field. Then
    \begin{equation}
        c (H, a_\Q) =c (H, a_{\mathbb K})
    \end{equation}
    for every {\hamil} $H$, and every field $\mathbb K$ of characteristic zero. In fact, for any field $\K$, we have a natural field extension $j\colon \Q \to \K$, and for any $a \in QH(M;\Q)$, we have $c_\Q (H,a) =c_\K (H, j(a))$; see for example \cite{AS23}.
\end{remark}

Using {\thm} \ref{Z vs Q}, we can prove {\thm} \ref{hingston positive gamma estimate}.

\begin{proof}[Proof of {\thm} \ref{hingston positive gamma estimate}]

We have
\begin{equation*}
    \begin{aligned}
\gamma_\Z (H) & = c(H, 1_\Z ) + c(\ovl{H}, 1_\Z ) \\
& = \left( c(H, 1_\Z ) -c (H, 1_\Q) \right) + \left( c(\ovl{H}, 1_\Z )   - c (\ovl{H}, 1_\Q) \right) +c (H, 1_\Q) +  c (\ovl{H}, 1_\Q)  \\
&=  \left( c(H, 1_\Z ) - \inf_k c(H, k \cdot 1_\Z ) \right) + \left( c(\ovl{H}, 1_\Z )   -  \inf_k c(\ovl{H}, k \cdot 1_\Z ) \right) + \gamma_\Q (H)  \\
&= \beta_{\spec}(H) + \beta_{\spec}(\overline{H}) + \gamma_\Q (H)\\
& \leq 1+\beta_{\spec}(H) + \beta_{\spec}(\overline{H}) 
     \end{aligned}
\end{equation*}
where in the third line, we used {\thm} \ref{Z vs Q}. We have completed the proof of {\thm} \ref{hingston positive gamma estimate}.
\end{proof}

Now, we will see that the positive answer ({\thm} \ref{hingston positive}) to the {\symp} counterpart (Question \ref{symp ver q of hingston}) of a question of Hingston (Question \ref{Hingston question}) follows from {\thm}s \ref{main CPn} and \ref{hingston positive gamma estimate}.

\begin{proof}[Proof of {\thm} \ref{hingston positive}]
    First, we prove {\thm} \ref{hingston positive} for the fundamental class, i.e. $a=[\C P^n]$. As, by {\propo} \ref{disjoint torus}, $\R P^n$ and $T^n _{\Chek}$ are disjoint inside $\C P^n$, we can take an autonomous (i.e. time-independent) {\hamil} $H:\C P^n \to \R $ such that $H|_{\R P^n}=0$ and $H|_{T^n _{\Chek}}=r+1$ where $r>0$: from {\propo} \ref{C and Z2 suphv}, we have
    \begin{equation}\label{example}
        \begin{gathered}
        \zeta_{1_\C} (H) =r +1 ,\\
        \zeta_{1_{\Z/2}} (H) =0.
        \end{gathered}
    \end{equation}
    By \eqref{mu leq gamma}, we have
    \begin{equation*}
        \begin{gathered}
        r +1 \leq \gamma_{\Z}(H) .
        \end{gathered}
    \end{equation*}
    From {\thm} \ref{hingston positive gamma estimate}, it follows that for this $H$, we have either $\beta_{\spec}(H)= c_\Z (H,[M]) - \inf_{k \in \N} c_\Z (H,k [M]) \geq \frac{r}{2}$ or $\beta_{\spec}(\overline{H}) = c_\Z (\ovl{H},[M]) - \inf_{k \in \N} c_\Z (\ovl{H},k [M]) \geq \frac{r}{2} $. This proves {\thm} \ref{hingston positive} for the fundamental class, i.e. $a=[\C P^n]$.

    We next proceed to the general case. Take a class $a = [\C P^l] $ where $0 \leq l \leq n$. The structure of the quantum homology of $\C P^n$ tells you that 
    \begin{equation}\label{CPn quantum}
    \begin{gathered}[c]
        [\C P^{n-1}]^{\ast (n-l)} = [\C P^l],\\
         [\C P^{n-1}]^{\ast (n+1)} = [\C P^n] \cdot T^{-1}. 
    \end{gathered}
    \end{equation}
    Thus, for every {\hamil} $F$, by combining the triangle inequality and \eqref{CPn quantum}, we have
 \begin{equation}\label{other class estimate one}
    \begin{gathered}
        c_\Z ( F, [\C P^{n}] ) \geq  c_\Z (F, [\C P^l ]) \geq  c_\Z ( F, [\C P^{n}] ) -1 .
    \end{gathered}
    \end{equation}
    The same relation as \eqref{CPn quantum} applies to the classes $k\cdot [\C P^{l}]$ where $k \in \N$ and thus, we also have
    \begin{equation}\label{other class estimate two}
    \begin{gathered}
        c_\Z ( F, k\cdot [\C P^{n}] ) \geq  c_\Z (F, k \cdot [\C P^l ]) \geq  c_\Z ( F, k \cdot [\C P^{n}] ) -1 .
    \end{gathered}
    \end{equation}
Now, let $G$ be a {\hamil} that satisfies 
\begin{equation}
    \beta_{\spec} (G) \geq \frac{r}{2},
\end{equation}
which, we have seen earlier. By relations \eqref{other class estimate one} and \eqref{other class estimate two}, we have
\begin{equation}
    \begin{aligned}
         c_\Z (G,[ \C P^l]) - \inf_{k \in \N} c_\Z (G,k \cdot [\C P^l]) 
       & \geq (c_\Z (G,[ \C P^n]) -1) - \inf_{k \in \N} c_\Z (G,k \cdot [\C P^n]) \\
       & = \beta_{\spec} (G) -1 \\
       & \geq \frac{r}{2} -1,
    \end{aligned}
\end{equation}
and thus, by choosing $r>0$ large enough, we have 
\begin{equation}
    \begin{aligned}
         c_\Z (G,[ \C P^l]) - \inf_{k \in \N} c_\Z (G,k \cdot [\C P^l]) 
       >0 .
    \end{aligned}
\end{equation}
We have completed the proof of {\thm} \ref{hingston positive}.
\end{proof}

\subsection{Integer {\coeff}s vs. prime {\coeff}s}\label{Changing and moving the field characteristic}

We discuss some refinements of {\thm}s \ref{hingston positive gamma estimate} and \ref{Z vs Q}. First, we will see that the analogous statement of {\thm} \ref{Z vs Q} for $\Z/p$-{\coeff}s does not hold, and that we can get a more precise information about the minimizer.

\begin{prop}\label{refinement}
Consider $\C P^n$. Let $p$ be a prime. 
\begin{enumerate}
    \item If $c_{\Z/p} (H,1) < c_{\C} (H,1) $ for a {\hamil} $H$, then we have
    \begin{equation}
        c_{\Z/p} (H,1) < \inf_{k \notin p \N} c_\Z (H, k \cdot 1) .
    \end{equation}

    \item If $c_{\Z/p} (H,1) > c_{\C} (H,1) $ for a {\hamil} $H$, then we have
    \begin{equation}
        \inf_{k \in \N} c_\Z (H, k \cdot 1) < \inf_{k \notin p \N} c_\Z (H, k \cdot 1) ,
    \end{equation}
    i.e. the infimum on the left-hand-side is achieved by an integer that is a multiple of $p$.
\end{enumerate}

\end{prop}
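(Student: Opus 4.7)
The plan is to combine Proposition \ref{key obs} applied to the projection $\Z \to \Z/p$, Theorem \ref{Z vs Q}, and the observation that for $k$ coprime to $p$ the residue $[k]$ is a unit in the field $\Z/p$, so $c_{\Z/p}(H, [k] \cdot 1) = c_{\Z/p}(H, 1)$ (scaling a class by an invertible scalar leaves its spectral invariant unchanged, directly from \eqref{eq:spec inv}). These ingredients produce a two-sided master inequality from which both parts drop out by immediate contraposition.

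First, for every $k \notin p\N$ one has
\[c_{\Z/p}(H, 1) = c_{\Z/p}(H, [k] \cdot 1) \leq c_\Z(H, k \cdot 1),\]
where the equality uses invertibility of $[k]$ in $\Z/p$ and the inequality is Proposition \ref{key obs} applied to the natural map $\Z \to \Z/p$. Taking the infimum over such $k$ yields $c_{\Z/p}(H, 1) \leq \inf_{k \notin p\N} c_\Z(H, k \cdot 1)$. Next, Theorem \ref{Z vs Q} together with Remark \ref{CPn Z Q C} gives $\inf_{k \in \N} c_\Z(H, k \cdot 1) = c_\Q(H, 1) = c_\C(H, 1)$, so by monotonicity of the infimum over the smaller set $\{k \in \N : k \notin p\N\}$,
\[c_\C(H, 1) \leq \inf_{k \notin p\N} c_\Z(H, k \cdot 1).\]

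Both conclusions now follow by contradiction. For Part 1, if the claimed strict inequality failed, then the inequality from the first step would become an equality $c_{\Z/p}(H, 1) = \inf_{k \notin p\N} c_\Z(H, k \cdot 1)$, whence the second displayed bound forces $c_\C(H, 1) \leq c_{\Z/p}(H, 1)$, contradicting the hypothesis $c_{\Z/p}(H, 1) < c_\C(H, 1)$. For Part 2, if $\inf_{k \in \N} c_\Z(H, k \cdot 1) = \inf_{k \notin p\N} c_\Z(H, k \cdot 1)$, then the left side equals $c_\C(H, 1)$ by Theorem \ref{Z vs Q}, while the right side is at least $c_{\Z/p}(H, 1)$ by the first displayed inequality, giving $c_\C(H, 1) \geq c_{\Z/p}(H, 1)$, again contradicting the hypothesis.

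There is no serious obstacle: Proposition \ref{key obs} and Theorem \ref{Z vs Q} do all the heavy lifting, and the only conceptual point is that a unit in $\Z/p$ is invisible to spectral invariants. The slight asymmetry in the statement—an inequality over all $k \notin p\N$ in Part 1 versus a witness with $p \mid k$ realizing the $\N$-infimum in Part 2—merely mirrors the asymmetry between the bound coming from the coefficient map $\Z \to \Z/p$ and the bound coming from Theorem \ref{Z vs Q}.
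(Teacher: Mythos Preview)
Your proof is correct and follows essentially the same route as the paper: both establish the key inequality $c_{\Z/p}(H,1)\leq \inf_{k\notin p\N} c_\Z(H,k\cdot 1)$ from Proposition~\ref{key obs}, invoke Theorem~\ref{Z vs Q} (together with Remark~\ref{CPn Z Q C}) to identify $\inf_{k\in\N} c_\Z(H,k\cdot 1)=c_\C(H,1)$, and then derive each part by an immediate contradiction. Your exposition is slightly more explicit in justifying that $c_{\Z/p}(H,[k]\cdot 1)=c_{\Z/p}(H,1)$ via invertibility of $[k]$, which the paper leaves implicit.
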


\begin{remark}
    The {\hamil} as in \eqref{example} satisfies the assumption of the first statement for $p=2$. Similarly, there are {\hamil}s satisfying the assumption of the second statement for $p=2$.
\end{remark}

\begin{proof}[Proof for {\propo} \ref{refinement}]
By {\propo} \ref{key obs}, we know that 
\begin{equation}\label{fghjk}
     c_{\Z/p} (H,1) \leq \inf_{k \notin p \N} c_\Z (H, k \cdot 1).
\end{equation}
To prove the first statement, we assume that an equality holds in \eqref{fghjk} for every {\hamil}. Then,
\begin{equation*}
    \begin{aligned}
        \inf_{k \notin p \N} c_\Z (H, k \cdot 1) & = c_{\Z/p} (H,1) \\
        & < c_{\C} (H,1) \\
        & =  \inf_{k \in \N} c_\Z (H, k \cdot 1) ,
    \end{aligned}
\end{equation*}
where we used the assumption and Remark \ref{CPn Z Q C} for the second and third lines, respectively. This is a contradiction.

The second statement can be understood as follows, by using \eqref{fghjk}:
\begin{equation*}
    \begin{aligned}
         \inf_{k \in \N} c_\Z (H, k \cdot 1) &=  c_{\C} (H,1) \\
        &< c_{\Z/p} (H,1) \\
        &\leq \inf_{k \notin p \N} c_\Z (H, k \cdot 1).
    \end{aligned}
\end{equation*}
\end{proof}

We now consider {\specinv}s over fields of characteristic $p$ and see if by varying $p$, one can recover {\specinv}s over $\C$ or $\Z$.

\begin{prop}\label{sup Zp vs C}
Consider $\C P^n$. We have
\begin{equation}
   \inf_{\text{$p$:prime}} c_{\Z/p} (H,1) \leq  c_\C (H, 1) \leq \sup_{\text{$p$:prime}} c_{\Z/p} (H,1) \leq  c_\Z (H, 1)
\end{equation}
for every {\hamil} $H$.
\end{prop}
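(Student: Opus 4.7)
The plan is to prove the three chained inequalities separately. The rightmost and leftmost bounds come directly from the coefficient-comparison Proposition \ref{key obs}; the middle inequality is where the real content lies. By Hofer continuity of spectral invariants I will assume $H$ is non-degenerate throughout, extending to arbitrary $H$ by sandwiching limits.

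First, applying Proposition \ref{key obs} to the quotient homomorphism $\Z \to \Z/p$ gives $c_{\Z/p}(H,1) \leq c_{\Z}(H,1)$ for every prime $p$, which delivers the rightmost inequality after taking the supremum. For the leftmost inequality, combine Theorem \ref{Z vs Q} with Remark \ref{CPn Z Q C} to identify $c_{\C}(H,1) = \inf_{k \in \N} c_{\Z}(H, k\cdot 1_{\Z})$. For each $k$, pick any prime $p$ with $p \nmid k$. Then $[k]$ is a unit in the field $QH(\C P^n; \Z/p)$, so rescaling by $[k]$ preserves spectral invariants, and Proposition \ref{key obs} gives $c_{\Z/p}(H,1) = c_{\Z/p}(H, [k]\cdot 1_{\Z/p}) \leq c_{\Z}(H, k\cdot 1_{\Z})$; taking infima over $k$ and $p$ yields the bound.

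For the middle inequality $c_{\C}(H,1) \leq \sup_{p} c_{\Z/p}(H,1)$, fix $\tau < c_{\C}(H,1) = c_{\Q}(H,1)$ and set $I_{\tau}^{(0)} := \Im\bigl(HF_0^{\tau}(H;\Z) \to HF_0(H;\Z)\bigr)$. Since $QH(\C P^n;\Z) = \Z[x,t]/(x^{n+1}=t)$ is concentrated in even degrees, PSS identifies $HF_0(H;\Z)$ with $QH_0(\C P^n;\Z) = \Z \cdot 1$, and Theorem \ref{Z vs Q} rules out every nonzero integer multiple of $PSS_{\Z}(1)$ lying in $I_{\tau}^{(0)}$, so $I_{\tau}^{(0)} = \{0\}$. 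Next I would invoke the universal coefficient theorem on the filtered free $\Z$-chain complex $CF^{\tau}(H;\Z)$: because $HF_{-1}(H;\Z) \cong QH_{-1}(\C P^n;\Z) = 0$, the Tor summand on the target side of the natural map vanishes, and naturality of UCT yields the identification
\[ c_{\Z/p}(H,1) \leq \tau \iff PSS_{\Z}(1) \in I_{\tau}^{(0)} + p\cdot HF_0(H;\Z). \]
Substituting $I_{\tau}^{(0)} = \{0\}$ and $HF_0(H;\Z) = \Z\cdot 1$, the right-hand side reduces to $1 \in p\Z$, which fails for every prime $p$. Thus $c_{\Z/p}(H,1) > \tau$ for every prime, so $\sup_{p} c_{\Z/p}(H,1) \geq \tau$; letting $\tau \uparrow c_{\C}(H,1)$ finishes the proof.

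The main obstacle I anticipate is executing the universal coefficient step cleanly, specifically verifying that the filtered UCT splitting is natural with respect to $CF^{\tau} \hookrightarrow CF$, so that the image originating from the Tor summand of $HF_0^{\tau}(H;\Z/p)$ lands in the Tor summand of $HF_0(H;\Z/p)$. The whole argument hinges on the special feature that $QH_0(\C P^n;\Z) = \Z$ together with $QH_{-1}(\C P^n;\Z) = 0$; without torsion-freeness in these two adjacent degrees a stray Tor class could a priori carry $PSS_{\Z/p}(1)$ down below filtration $c_{\Q}(H,1)$ and break the conclusion.
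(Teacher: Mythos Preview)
Your treatments of the leftmost and rightmost inequalities are correct and coincide with the paper's. The gap is in the middle inequality $c_{\C}(H,1) \leq \sup_p c_{\Z/p}(H,1)$, exactly at the step you yourself flagged: the claimed equivalence
\[
c_{\Z/p}(H,1) \leq \tau \;\Longleftrightarrow\; PSS_{\Z}(1) \in I_\tau^{(0)} + p\cdot HF_0(H;\Z)
\]
is false. The UCT short exact sequence is natural, but its splitting is not, and naturality alone does \emph{not} force the Tor part of $HF_0^\tau(H;\Z/p)$ to land in the (vanishing) Tor part of $HF_0(H;\Z/p)$. The obstacle you anticipated is not a technicality to be cleaned up; it is fatal to this route.

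Here is a two-generator model of the failure. Take a filtered $\Z$-complex with $C_0 = \Z a \oplus \Z c$, $C_{-1} = \Z b$, differential $da = pb$, $dc = b$, and put $a,b$ at filtration $\leq \tau$ while $c$ sits above $\tau$. Then $H_0(C;\Z) = \Z\langle a - pc\rangle$ and $H_{-1}(C;\Z) = 0$, matching the degree-$0$ and degree-$(-1)$ picture for $\C P^n$. One has $H_0(C^\tau;\Z) = 0$ (since $da = pb \neq 0$), so $I_\tau^{(0)} = 0$. Yet over $\Z/p$ the element $a$ is a cycle already in $C^\tau$, and its image in $H_0(C;\Z/p) = (\Z/p)\langle a\rangle$ is precisely the mod-$p$ reduction of the $\Z$-generator $a - pc$. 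Hence the analogue of ``$c_{\Z/p}(H,1)\leq\tau$'' holds while the right-hand side of your equivalence fails. Nothing about the filtered Floer complex of $\C P^n$ rules out this configuration.

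The paper sidesteps this entirely: it invokes Poincar\'e duality over the field $\Z/p$ to rewrite $-c_{\Z/p}(H,1) = c_{\Z/p}(\ol{H},pt)$, then applies the already-established leftmost inequality to $\ol{H}$ and the point class, and finishes with {\propo} \ref{inf pt and 1} to convert $\inf_k c_{\Z}(\ol{H},k\cdot pt)$ back into $-\inf_k c_{\Z}(H,k\cdot 1) = -c_{\C}(H,1)$.
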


\begin{remark}
    It would be interesting to figure out whether an equality holds for the third inequality in {\propo} \ref{sup Zp vs C}. Note that the first and the second inequalities can be strict (e.g. $p=2$). 
\end{remark}

\begin{proof}[Proof of {\propo} \ref{sup Zp vs C}]
We first prove $\inf_{p} c_{\Z/p} (H,1) \leq  c_\C (H, 1)$. From {\propo} \ref{key obs}, for a fixed prime $p$, we have
\begin{equation*}
  c_{\Z/p} (H,1) \leq  \inf_{k \notin p \cdot \Z} c_\Z (H,k\cdot 1).
\end{equation*}
Thus,
\begin{equation}\label{isvnscsncscncds}
\begin{aligned}
    \inf_p c_{\Z/p} (H,1) & \leq    \inf_p \inf_{k \notin p \cdot \Z} c_\Z (H,k \cdot 1) \\
    & =\inf_{k \in \Z} c_\Z (H,k \cdot 1).
\end{aligned}
\end{equation}
Note that the argument also works for the point class $pt$ instead of the fundamental class $1$.

The inequality $\sup_{p} c_{\Z/p} (H,1) \leq  c_\Z (H, 1)$ is a straightforward consequence of {\propo} \ref{key obs}. We prove $c_\C (H, 1) \leq \sup_{p} c_{\Z/p} (H,1)$. We have
\begin{equation*}
    \begin{aligned}
        \sup_{p} c_{\Z/p} (H,1) & = - \inf_p (-c_{\Z/p} (H,1)) \\
        & = - \inf_p c_{\Z/p} (\overline{H},pt)  \\
        & \geq - \inf_{k \in \Z} c_\Z (\overline{H}, k \cdot pt) \\
        & = \inf_{k \in \Z} c_\Z (H,k \cdot 1) \\
        & = c_\C (H, 1).
    \end{aligned}
\end{equation*}
We used the Poincar\'e duality over fields for the second equality, the inequality \eqref{isvnscsncscncds} with the point class for the inequality, the following {\propo} \ref{inf pt and 1} for the third equality, and Remark \ref{CPn Z Q C} for the fourth equality. We have proven {\propo} \ref{sup Zp vs C}.
\end{proof}

\begin{prop}\label{inf pt and 1}
Consider $\C P^n$. We have 
\begin{equation*}
    \inf_k c_\Z (H, k \cdot pt_\Z) =- \inf_k c_\Z (\overline{H}, k \cdot 1_\Z)  .
\end{equation*}
for any {\hamil} $H$.
\end{prop}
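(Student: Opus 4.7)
The plan is to decouple the two sides of the claimed identity by passing through $\Q$-coefficients. Concretely, I will (i) run the chain-level argument of Theorem \ref{Z vs Q} verbatim but with the point class $pt$ in place of the fundamental class $1$, so as to obtain $\inf_{k \in \N} c_\Z(H, k \cdot pt_\Z) = c_\Q(H, pt_\Q)$; (ii) invoke Poincar\'e duality over the field $\Q$ on $\C P^n$ to trade $c_\Q(H, pt_\Q)$ for $-c_\Q(\overline{H}, 1_\Q)$; and (iii) apply Theorem \ref{Z vs Q} once more, this time to $\overline{H}$ and the fundamental class, in order to rewrite $c_\Q(\overline{H}, 1_\Q) = \inf_{k \in \N} c_\Z(\overline{H}, k \cdot 1_\Z)$.

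For step (i), the key observation is that the chain-level argument in the proof of Theorem \ref{Z vs Q} uses nothing specific to the fundamental class: it relies only on the fact that $1_\Z$ is non-torsion in $QH(M;\Z)$, that $PSS$ is $\Z$-linear, and that $PSS$ is compatible with the coefficient map induced by $j\colon \Z \hookrightarrow \Q$ in the sense of the diagram \eqref{diagram coeff}. Since $pt_\Z \in QH(\C P^n;\Z)$ is also non-torsion with $j(pt_\Z) = pt_\Q \neq 0$, both halves of that argument transfer: a $\Z$-cycle in $CF(H;\Z)$ realizing $c_\Z(H, k \cdot pt_\Z)$ can be divided by $k$ over $\Q$ to produce a $\Q$-cycle representing $PSS_{H;\Q}(pt_\Q)$ of the same action, and conversely a $\Q$-cycle realizing $c_\Q(H, pt_\Q)$ can, after being corrected by a boundary in $CF(H;\Q)$ matching a $\Z$-cycle for $PSS_{H;\Z}(pt_\Z)$ and then clearing denominators by some integer $k$, be turned into a $\Z$-cycle representing $PSS_{H;\Z}(k \cdot pt_\Z)$ of the same action.

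For step (ii), the polynomial $x^{n+1} - t$ is irreducible over the downward-Laurent Novikov field $\Lambda_\Q$ for $n \geq 1$, so $QH(\C P^n;\Q) = \Lambda_\Q[x]/(x^{n+1}-t)$ is a field; moreover $1$ and $pt$ are Poincar\'e dual under the pairing $\Pi$. In this setting the Poincar\'e duality formula \eqref{PD formula over field} collapses to the equality $c_\Q(H, pt_\Q) = -c_\Q(\overline{H}, 1_\Q)$; this is precisely the identity already invoked in the proof of Proposition \ref{sup Zp vs C}. Chaining (i)--(iii) then gives
$$\inf_{k \in \N} c_\Z(H, k \cdot pt_\Z) \;=\; c_\Q(H, pt_\Q) \;=\; -c_\Q(\overline{H}, 1_\Q) \;=\; -\inf_{k \in \N} c_\Z(\overline{H}, k \cdot 1_\Z),$$
as required. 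The only real content is step (i); since the proof of Theorem \ref{Z vs Q} transfers essentially word-for-word, I do not expect any substantive obstacle beyond bookkeeping.
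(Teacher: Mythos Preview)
Your proposal is correct and follows exactly the same three-step route as the paper's proof: the paper writes the chain $\inf_k c_\Z(H, k \cdot pt_\Z) = c_\Q(H, pt) = -c_\Q(\overline{H}, 1) = -\inf_k c_\Z(\overline{H}, k \cdot 1)$, invoking Poincar\'e duality over $\Q$ for the middle equality and Theorem \ref{Z vs Q} (for $1$ and, implicitly, its point-class analogue) for the outer two. Your explicit justification of step (i)---that the chain-level argument of Theorem \ref{Z vs Q} transfers verbatim to any non-torsion class, in particular to $pt_\Z$---is exactly what the paper has in mind (it remarks elsewhere that ``the argument also works for the point class $pt$'').
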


\begin{proof}[Proof of {\propo} \ref{inf pt and 1}]
We have
    \begin{equation*}
\begin{aligned}
     \inf_k c_\Z (H, k \cdot pt_\Z) &= c_\Q (H, pt) \\
     &=- c_\Q (\overline{H}, 1) \\
     & = - \inf_k c_\Z (\overline{H}, k \cdot 1)   
\end{aligned}
\end{equation*}
where in the second equality, we have used the Poincar\'e duality for $\Q$-{\coeff}s.
\end{proof}

\subsection{Proofs for Section \ref{PD general}}\label{sec: PD}

We prove Theorem \ref{corrected formula}.

\begin{proof}[Proof of Theorem \ref{corrected formula}]

First of all, we re-examine the Poincar\'e duality correspondence in Floer theory with filtration and over $\Z$-{\coeff}s. For $\tau \notin \spec(H),$ we introduce the notation
\begin{equation*}
    CF_{\ast } ^{ \geq  \tau} (H):= CF_{\ast } ^{ } (H) /CF_{\ast } ^{ \leq \tau} (H) .
\end{equation*}

Consider the pairing 
\begin{equation}\label{CF pairing}
    \begin{gathered}
\langle - , - \rangle \colon CF_k ^{\leq \tau} (\ovl{H}) \otimes CF_{2n-k} ^{ \geq -\tau} (H) \to \Z \\
\langle z , w \rangle := \tau ((z \ast_{\text{PP}} w) \circ [M]).
    \end{gathered}
\end{equation}
Here $\ast_{\text{PP}}$ denotes the pair-of-pants product, 
\begin{equation}\label{tau map}
    \begin{gathered}
        \tau \colon \Lambda \to \Z\\
        \sum_{j} a_j \cdot s^j \mapsto a_0,
    \end{gathered}
\end{equation}
we dentify $CF(0)$ and $QH(M;\Z) = H_*(M;\Lambda),$ and $\circ$ denotes the usual homological intersection product. This pairing induces an injective chain homomorphism

\begin{equation*}
    PD: CF_k ^{\leq \tau} (\ovl{H}) \to Hom (CF_{2n-k} ^{ \geq -\tau} (H) , \Z) .
\end{equation*}

 This leads to the following exact diagram:
\begin{equation}\label{PD chain diagram}
\begin{tikzcd}[sep=small]
 0 \arrow[r] & CF_{k} ^{\leq \tau} (\ovl{H}) \arrow[d,"PD"] \arrow[r,"i_{\tau}"] & CF_{k}  (\ovl{H}) \arrow[d,"PD"] \arrow[r,"j_{\tau}"] & CF_k ^{\geq \tau} (\ovl{H}) \arrow[d,"PD"] \arrow[r] & 0 \\
 0 \arrow[r] & Hom ( CF_{2n-k} ^{ \geq -\tau} (H) , \Z ) \arrow[r]  &  Hom ( CF_{2n-k}  (H) ,\Z ) \arrow[r] &  Hom ( CF_{2n - k }  ^{\leq  -\tau} (H) , \Z )  \arrow[r]& 0.
\end{tikzcd}
\end{equation}

\begin{remark}
    For the $PD$ on the right, we have used 
    \begin{equation*}
        \begin{aligned}
           Hom ( CF_{2n-k}  (H) ,\Z )/ Hom ( CF_{2n-k} ^{ \geq -\tau} (H) , \Z ) 
           & = Hom ( CF_{2n-k}  (H) / CF_{2n-k} ^{ \geq -\tau} (H) , \Z ) \\
           & \simeq Hom ( CF_{2n - k }  ^{\leq  -\tau} (H) , \Z )  .
        \end{aligned}
    \end{equation*}
\end{remark}

By passing the pairing \eqref{CF pairing} and the diagram \eqref{PD chain diagram} to homology, we obtain
\begin{equation}\label{HF pairing}
    \begin{gathered}
\langle - , - \rangle \colon HF_k ^{\leq \tau} (\ovl{H}) \otimes HF_{2n-k} ^{ \geq -\tau} (H) \to \Z  ,
    \end{gathered}
\end{equation}
and the exact diagram

\begin{equation}\label{PD HF diagram}
\begin{tikzcd}
 HF_{k} ^{\leq \tau} (\ovl{H}) \arrow[d,"PD"] \arrow[r,"i_{\tau}"] & HF_{k}  (\ovl{H}) \arrow[d,"PD"] \arrow[r,"j_{\tau}"] &
 H (CF_k ^{\geq \tau} (\ovl{H}) ) \arrow[d,"PD"]  \\
  HF^{2n - k }  _{ \geq -\tau} (H)  \arrow[r,"j_{-\tau} ^\ast "]  & 
 HF^{2n-k}  (H)  \arrow[r,"i_{-\tau} ^\ast "] &  HF^{2n - k }  _{\leq  -\tau} (H) ,
\end{tikzcd}
\end{equation}

where we use the notation
    \begin{equation*}
        \begin{aligned}
           HF^{\ast }  _{ \geq \tau} (H) &:=  H( Hom ( CF_{\ast} ^{ \geq \tau} (H) , \Z ) ) ,\\
           HF^{\ast}  _{ \leq \tau} (H) &:=H( Hom ( CF_{\ast} ^{ \leq \tau} (H) , \Z ) ) .
        \end{aligned}
    \end{equation*}
   
\begin{remark} 
    Note that in \cite[Exercise 12.4.7]{MS98}, the notation ``$HF^{\ast }  _{ \tau} (H)$'' is used to refer to what we denote by $HF^{\ast }  _{ \geq \tau} (H)$ is this paper.
\end{remark}

In addition to \eqref{HF pairing}, we have the following two pairings which will be important. The first pairing is the canonical pairing of homology and cohomology:
\begin{equation}\label{homology cohomology pairing}
    \begin{gathered}
( - , - ) \colon  HF^{k} _{ \leq \tau} (H)  \otimes HF_k ^{\leq \tau} (H) \to \Z   .
    \end{gathered}
\end{equation}
The second pairing is
\begin{equation}\label{QH pairing}
    \begin{gathered}
\Pi( - , - ) \colon  QH_k (M;\Z)  \otimes QH_{2n-k} (M;\Z) \to \Z\\
\Pi( a , b ) :=  \tau ( \Delta (a \ast b , [M]))
    \end{gathered}
\end{equation}
where $\tau$ is as in \eqref{tau map} and $\Delta $ is as follows; for a class $\sum_{j} a_j \cdot s^j \in QH_\ast(M;\Z)$, we define $\Delta (\sum_{j} a_j \cdot s^j  , [M]):= \sum_{j} (a_j \circ [M]) \cdot s^j  $ where $\circ$ is the intersection product of homology. The relations between the three pairings \eqref{HF pairing}, \eqref{homology cohomology pairing}, and \eqref{QH pairing} are as follows:
for classes $a  \in QH_k (M;\Z)  $ and $b \in QH_{2n-k} (M;\Z)$, we have
\begin{equation}\label{pairing QH vs HF}
    \begin{gathered}
\Pi( a , b )= \langle PSS_H (a) , PSS_{\overline{H}} (b) \rangle
    \end{gathered}
\end{equation}
and for $\tau = \infty$, we also have 
\begin{equation}\label{pairing HF vc hom cohom}
    \begin{gathered}
\langle z , w \rangle =( PD(z) , w ) 
    \end{gathered}
\end{equation}
for every $z \in HF_k (H)$ and $w \in HF_{2n-k} (\overline{H})$ and $PD \colon  HF_{k} (H) \to HF^{2n-k} (\overline{H}) $.

Now, according to diagram \eqref{PD HF diagram}, the {\specinv} of a class $a\in QH_{\ast} (M;\Z)$ is expressed as 
\begin{equation}\label{c(H,a)}
    \begin{aligned}
    c(\ovl{H},a)& = \inf \{\tau : PSS_{\ovl{H}} (a) \in \Im (i_{\tau})\} \\
    &= \inf \{\tau : j_{\tau} \circ PSS_{\ovl{H}} (a) =0 \} \\
    &= \sup \{\tau : j_{\tau} \circ PSS_{\ovl{H}} (a) \neq 0 \} \\
    &= \sup \{\tau : PD \circ j_{\tau} \circ PSS_{\ovl{H}} (a) \neq 0 \in HF^{2n - k }  _{ \leq -\tau} (H)  \} \\
    & = \sup \{\tau : i_{-\tau} ^\ast \circ  PD \circ PSS_{\ovl{H}} (a) \neq 0 \in HF^{2n - k }  _{ \leq -\tau} (H)  \} \\
    &= - \inf \{\tau : i_{\tau} ^\ast \circ  PD \circ PSS_{\ovl{H}} (a) \neq 0 \in HF^{2n - k }  _{ \leq \tau} (H)  \}.
    \end{aligned}
\end{equation}
where $PSS_{\ovl{H}} : QH_{\ast} (M;\Z) \to HF(\ovl{H};\Z) $.

We apply the (cohomology version of the) universal {\coeff} theorem to the chain complex $(Hom ( CF_{2n - k }  ^{\leq \tau} (H) , \Z ) ,\delta)$, which is a cochain complex of $(CF_{2n - k }  ^{ \leq \tau} (H),\partial)$, and obtain the following:
\begin{equation}\label{uct}
\begin{tikzcd}[sep=small]
 0 \arrow[r] & Ext (HF_{2n - k - 1 }  ^{\leq \tau} (H), \Z) \arrow[r,"f_{\tau}"] & HF^{2n - k }  _{\leq  \tau} (H) \arrow[r,"g_{\tau}"] & Hom (HF_{2n - k }  ^{\leq  \tau} (H), \Z) \arrow[r] & 0.
\end{tikzcd}
\end{equation}

To simplify notation, we introduce the following:
\begin{equation}
    \begin{aligned}
    c&:= c(\ovl{H},a),\\
    \alpha &:= PD \circ PSS_{\ovl{H}} (a) ,\\
    T^\tau (H) &:= Ext (HF_{2n - k - 1 }  ^{ \leq \tau} (H), \Z) , \\
    V^\tau (H) &:= HF^{2n - k }  _{ \leq \tau} (H),\\
    F^\tau (H) &:= Hom (HF_{2n - k }  ^{\leq \tau} (H), \Z),\\
    \beta_{\tor}& :=\beta_{\tor} (H)= \beta (Ext (HF_{2n - k - 1 }  ^{ \leq \tau} (H), \Z)).
    \end{aligned}
\end{equation}
With this notation, the universal {\coeff} theorem \eqref{uct} becomes
\begin{equation}
\begin{tikzcd}
 0 \arrow[r] &  T^\tau (H) \arrow[r,"f_{\tau}"] & V^\tau (H) \arrow[r,"g_{\tau}"] & Hom( F^\tau (H);\Z) \arrow[r] & 0
\end{tikzcd}
\end{equation}
and the equation \eqref{c(H,a)} concerning the {\specinv} $c(\ovl{H},a)$ becomes
\begin{equation}\label{spec inv expression}
    \begin{gathered}
    -c=  \inf \{\tau : i_{\tau} ^\ast (\alpha) \neq 0 \in V^\tau (H)  \}.
    \end{gathered}
\end{equation}

We now begin the proof of 
\begin{equation*}
    - c_\Z (\overline{H},a)   =   \inf \{ c_\Z (H, b): a \circ b  \neq 0 \}  .
\end{equation*}
To prove
\begin{equation*}
    - c_\Z (\overline{H},a)  \leq   \inf \{ c_\Z (H, b): \Pi(a , b)  \neq 0 \} 
\end{equation*}
is easy: this follows from the triangle inequality. 

Now, we prove 
\begin{equation}\label{one side}
    - c_\Z (\overline{H},a)  \geq   \inf \{ c_\Z (H, b): \Pi (a , b)  \neq 0 \} - \beta_{\tor} .
\end{equation}
It suffices to prove the following claim in order to prove \eqref{one side}. 

\begin{claim}\label{claim formula}
We have  
$$g_{ -c +\beta_{\tor} } \circ i_{-c +\beta_{\tor}} ^\ast (\alpha) \neq 0 .$$
\end{claim}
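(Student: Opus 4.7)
The plan is to argue by contradiction: assume $g_{\tau_0}\circ i^\ast_{\tau_0}(\alpha)=0$ where $\tau_0 = -c+\beta_{\tor}$, and derive an impossibility. By exactness of the universal coefficient sequence at filtration $\tau_0$ and injectivity of $f_{\tau_0}$, this yields a unique $t\in T^{\tau_0}$ with $i^\ast_{\tau_0}(\alpha)=f_{\tau_0}(t)$.

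The first main ingredient I would invoke is naturality of UCT with respect to the chain-level inclusions $CF^{\leq \tau}\hookrightarrow CF^{\leq \tau_0}$ for $\tau\leq\tau_0$: this produces a commutative ladder of UCT short exact sequences and hence $i^\ast_\tau(\alpha)=f_\tau(t_\tau)$ for all $\tau\leq \tau_0$, where $t_\tau$ is the image of $t$ under $T^{\tau_0}\to T^\tau$. By spectrality together with formula \eqref{spec inv expression}, one has $i^\ast_{-c}(\alpha)\neq 0$, so $t_{-c}\neq 0$ (using injectivity of $f_{-c}$). Next I would apply the definition of $\beta_{\tor}$ at $\tau=-c$: since $\tau_0-(-c)=\beta_{\tor}$, the equality $\Im(T^\infty\to T^{-c})=\Im(T^{\tau_0}\to T^{-c})$ furnishes some $s\in T^\infty$ whose image in $T^{-c}$ coincides with $t_{-c}$. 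A short diagram chase then yields $i^\ast_\tau(\alpha-f_\infty(s))=0$ for every $\tau\leq -c$.

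The hard part is extracting the contradiction from the resulting decomposition $\alpha=f_\infty(s)+\alpha'$ with $\alpha':=\alpha-f_\infty(s)$. Since $f_\infty(s)\in \Im f_\infty=\ker g_\infty$, the class $f_\infty(s)$ pairs trivially with every homology class in $F^\infty=HF_{2n-k}(H)$, and in particular $g_\infty(\alpha)=g_\infty(\alpha')$; meanwhile $\alpha'$ has spectral invariant strictly greater than $-c$. To derive impossibility one must use the specific structure of $\alpha=PD\circ PSS_{\overline{H}}(a)$: the expected ingredients are spectrality, producing a cocycle representative of $PSS_{\overline{H}}(a)$ of action exactly $c$; the chain-level pair-of-pants pairing \eqref{CF pairing}; and a careful comparison of action bounds on the cochain level which rules out the pure-Ext summand $f_\infty(s)$ carrying the full spectral weight of $\alpha$. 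An alternative route is to iterate Steps 1--5 with $\alpha$ replaced by $\alpha'$, producing a cascade of Ext corrections whose spectral invariants strictly increase, and then invoke local finiteness of $\spec(H)$ for non-degenerate $H$ on the rational manifold $(M,\om)$ to force termination and hence $g_\infty(\alpha)=0$, contradicting the non-vacuity hypothesis $\inf\{c_\Z(H,b):\Pi(a,b)\neq 0\}<\infty$; this iteration, however, requires independently verifying the vanishing of $g_{c^{(k)}+\beta_{\tor}}(i^\ast_{c^{(k)}+\beta_{\tor}}(\alpha^{(k)}))$ at each stage.
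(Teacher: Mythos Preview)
Your proposal has a genuine gap: you never derive the contradiction. After obtaining $i^\ast_\tau(\alpha-f_\infty(s))=0$ for $\tau\leq -c$ you correctly recognize that the remaining step is ``the hard part,'' but neither of the two routes you sketch works as stated. The chain-level pair-of-pants comparison is vague, and the inductive cascade requires, at each stage, precisely the vanishing of $g$ that you are trying to establish---as you yourself note---so the induction never closes. The difficulty is an artifact of your detour through the lift $s\in T^\infty$ and the decomposition $\alpha=f_\infty(s)+\alpha'$: once you pass to $\alpha'$ you have severed the link to the spectral characterization \eqref{spec inv expression} of $-c$, since $\alpha'$ is no longer $PD\circ PSS_{\overline H}(a)$, and there is nothing left to contradict.

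The paper's argument is three lines and avoids all of this. From $g_{\tau_0}(i^\ast_{\tau_0}(\alpha))=0$ and exactness one obtains $\alpha_{\tor}\in T^{\tau_0}$ with $i^\ast_{\tau_0}(\alpha)=f_{\tau_0}(\alpha_{\tor})$, exactly as you do. The paper then invokes Definition~\ref{beta tor} to conclude directly that $i^\ast_{-c,\,-c+\beta_{\tor}}(\alpha_{\tor})=0$ in $T^{-c}$. By commutativity of the left square in the UCT ladder this gives
\[
i^\ast_{-c}(\alpha)\;=\;i^\ast_{-c,\,-c+\beta_{\tor}}\big(f_{\tau_0}(\alpha_{\tor})\big)\;=\;f_{-c}\big(i^\ast_{-c,\,-c+\beta_{\tor}}(\alpha_{\tor})\big)\;=\;0,
\]
contradicting \eqref{spec inv expression}. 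No lift to $T^\infty$, no decomposition of $\alpha$, and no iteration are needed: the torsion depth is applied as a vanishing statement for the single element $\alpha_{\tor}$ under the structure map $T^{\tau_0}\to T^{-c}$, not merely as an image-equality, and that is what closes the argument immediately.
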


Before we prove Claim \ref{claim formula}, we will see how to obtain \eqref{one side}. By Claim \ref{claim formula}, there exists a class $a_0 \in HF_{2n - \deg(a) }  ^{ -c +\beta_{\tor}} (H)$ such that 
\begin{equation}\label{yuvalllll}
    \begin{aligned}
    (  i_{-c +\beta_{\tor}} ^\ast (\alpha)  , a_0 ) = \left( g_{ -c +\beta_{\tor} } \circ i_{-c +\beta_{\tor}} ^\ast (\alpha) \right) (a_0 )  \neq 0.
    \end{aligned}
\end{equation}
The following diagram is a combination of diagrams \eqref{PD HF diagram} and \eqref{uct}, which might summarize the situation:

\begin{equation}
\begin{tikzcd}[sep=tiny]
  HF_{k} ^{\tau} (\ovl{H})  \arrow[r,"i_{\tau}"] & HF_{k}  (\ovl{H}) \arrow[d,"PD"]  & Hom (HF_{2n - k }  ^{ } (H) ;\Z) \arrow[r,"i_{-\tau} ^\ast "] &  Hom (HF_{2n - k }  ^{ \tau} (H) ;\Z)  \\
  &    HF^{2n-k}  (H)  \arrow[r,"i_{-\tau} ^\ast "] \arrow[ur,"g_{}"]  &  HF^{2n - k }  _{ -\tau} (H)  \arrow[ur,"g_{\tau}"]& \\
  & Ext (HF_{2n - k - 1 }  ^{ \tau} (H), \Z) \arrow[ur,"f_{\tau}"]  & &.
\end{tikzcd}
\end{equation}
We obtain

\begin{equation}\label{viviannnnnnn}
    \begin{aligned}
     (  i_{-c +\beta_{\tor}} ^\ast (\alpha) ,a_0)  
     &= (    \alpha, i_{-c +\beta_{\tor}}(a_0) ) \\
    &= \left(     PD (PSS_{\ovl{H}}(a) ), i_{-c +\beta_{\tor}}(a_0) \right)\\
    &= \langle   PSS_{\ovl{H}}(a) , i_{-c +\beta_{\tor}}(a_0) \rangle ,
    \end{aligned}
\end{equation}
where we have used \eqref{pairing HF vc hom cohom} in the last equation. Thus, from \eqref{yuvalllll} and \eqref{viviannnnnnn}, we have
\begin{equation}\label{eq123}
   \langle   PSS_{\ovl{H}}(a) , i_{-c +\beta_{\tor}}(a_0) \rangle  \neq 0 .
\end{equation}
From \eqref{pairing QH vs HF}, we have
\begin{equation}\label{intersection cohomology}
   \Pi ( a, PSS_{\ovl{H}} ^{-1} \circ i_{-c +\beta_{\tor}}(a_0) )   \neq 0.
\end{equation}
Now, the equation \eqref{intersection cohomology} gives us
\begin{equation}\label{eq1234}
    \begin{gathered}
      \inf \{ c(H, b): \Pi (a , b)  \neq 0 \} \leq c(H , PSS_{\ovl{H}} ^{-1}\circ  i_{-c +\beta_{\tor}}(a_0)) ,
    \end{gathered}
\end{equation}
and we also have 
\begin{equation}\label{tauto}
    c(H , PSS_{\ovl{H}} ^{-1}\circ  i_{-c +\beta_{\tor}}(a_0))  \leq -c +\beta_{\tor}
\end{equation}
which follows tautologically from the definition of {\specinv}s. 
Combining \eqref{eq1234} and \eqref{tauto}, we have 
\begin{equation}
    \begin{gathered}
      \inf \{ c(H, b): \Pi(a, b)  \neq 0 \} -\beta_{\tor} \leq -c .
    \end{gathered}
\end{equation}

We have proven \eqref{one side} and thus, it only remains to prove Claim \ref{claim formula} to complete the proof of {\thm} \ref{corrected formula}.

\begin{proof}[Proof of Claim \ref{claim formula}]
We argue by contradiction: assume that
\begin{equation}\label{assumption}
    \begin{gathered}
      g_{ -c +\beta_{\tor} } \circ i_{-c +\beta_{\tor}} ^\ast (\alpha) = 0  .
    \end{gathered}
\end{equation}

We consider the following diagram for $\beta=\beta_{\tor}$:

\begin{equation}
\begin{tikzcd}[]
0\arrow[r] &[-15pt] Ext (HF_{2n - k - 1 } ^{-c +\beta} (H), \Z) \arrow[r,"f_{ -c +\beta }"] \arrow[d,"i_{-c,-c +\beta } ^\ast "] &[5pt] HF^{2n-k} _{-c +\beta} (H)  \arrow[d,"i_{ -c,-c +\beta } ^\ast "] \arrow[r,"g_{ -c +\beta }"] &[5pt]  Hom (HF_{2n - k }  ^{-c +\beta} (H) ;\Z) \arrow[d,"i_{ -c,-c +\beta } ^\ast "] \arrow[r] &[-15pt]  0 \\
0\arrow[r] &[-15pt] Ext (HF_{2n - k - 1 } ^{-c }(H), \Z) \arrow[r,"f_{ -c }"] &[5pt] HF^{2n-k} _{-c} (H) \arrow[r,"g_{ -c }"] &[5pt]  Hom (HF_{2n - k }  ^{ -c} (H) ;\Z) \arrow[r,] &[-15pt]  0
\end{tikzcd}
\end{equation}
The class $i_{-c +\beta_{\tor}} ^\ast (\alpha) $ is in $HF^{2n-k} _{-c +\beta_{\tor}} (H) $. From the assumption \eqref{assumption} and the exactness of the upper sequence, there exists a class $\alpha_{\tor} \in Ext (HF_{2n - k - 1 } ^{-c +\beta_{\tor}} (H), \Z)$ such that
\begin{equation}
    \begin{gathered}
       i_{-c +\beta_{\tor}} ^\ast (\alpha) =  f_{ -c +\beta_{\tor} } (\alpha_{\tor}) .
    \end{gathered}
\end{equation}
By the commutativity of the left block, we have
\begin{equation}
    \begin{gathered}
       i_{-c,-c +\beta_{\tor} } ^\ast \circ f_{ -c +\beta_{\tor} } (\alpha_{\tor}) = f_{ -c  } \circ i_{-c,-c +\beta_{\tor} } ^\ast (\alpha_{\tor}) =0.
    \end{gathered}
\end{equation}
We have used that $i_{-c,-c +\beta_{\tor} } ^\ast (\alpha_{\tor})=0$ by the definition of the boundary depth $\beta_{\tor}$ (see Definition \ref{beta tor}). Now, this implies 
\begin{equation}
    \begin{gathered}
     i_{-c} ^\ast (\alpha) =  i_{-c,-c +\beta_{\tor} } ^\ast \circ i_{-c +\beta_{\tor}} ^\ast (\alpha) =  i_{-c,-c +\beta_{\tor} } ^\ast \circ f_{ -c +\beta_{\tor} } (\alpha_{\tor}) =0 .
    \end{gathered}
\end{equation}
This contradicts the equation \eqref{spec inv expression}, therefore we conclude that
\begin{equation}
    \begin{gathered}
      g_{ -c +\beta_{\tor} } \circ i_{-c +\beta_{\tor}} ^\ast (\alpha) \neq 0  .
    \end{gathered}
\end{equation}
\end{proof}

This completes the proof of Theorem \ref{corrected formula}.

\end{proof}

\begin{remark}
    For $\C P^n$, we have the following version of the Poincar\'e duality formula:
\begin{equation}\label{special}
    -c_\Z (\overline{H},1) = \inf_k c_\Z (H, k \cdot pt_\Z) - \beta_{\spec} (\overline{H}) 
\end{equation}
for any $H$. Note that the right-hand-side includes information about the {\specinv}s for $\overline{H}$ so this is not exactly a Poincar\'e duality formula.

\begin{proof}[Proof of \eqref{special}]
By using {\thm} \ref{Z vs Q} and the Poincar\'e duality formula over $\Q$, we get 
\begin{equation*}
\begin{aligned}
     -c_\Z (\overline{H},1) & = -\left( c_\Z (\overline{H},1) - c_\Q (\overline{H},1) \right) +c_\Q (\overline{H},1)\\
     & = -\left( c_\Z (\overline{H},1) - \inf_k c_\Z (\overline{H},k \cdot 1) \right) - c_\Q (H,pt_\Q) \\
     & = -\left( c_\Z (\overline{H},1) - \inf_k c_\Z (\overline{H},k \cdot 1) \right) - \inf_k c_\Z (H,k \cdot pt_\Z) \\
     & = \inf_k c_\Z (H, k \cdot pt_\Z) - \beta_{\spec} (\overline{H}) .
\end{aligned}
\end{equation*}

\end{proof}
\end{remark}

We prove {\thm} \ref{spectral and torsion depth}.

\begin{proof}[Proof of {\thm} \ref{spectral and torsion depth}]

For any {\hamil} $H$, we have
\begin{equation}\label{eq ryu}
    \begin{aligned}
        c_\Z (H,[M]) -\inf_{k \in \N} c_\Z (H,k [M])
        & \leq -\inf_{j \in \N} c_\Z (\overline{H},j [pt])+ \beta_{\tor} (\overline{H}) -\inf_{k \in \N} c_\Z (H,k [M])\\
        & = \inf_{k \in \N} c_\Z (H,k [M]) -\inf_{k \in \N} c_\Z (H,k [M]) + \beta_{\tor} (\overline{H}) \\
        & = \beta_{\tor} (\overline{H})  .
    \end{aligned}
\end{equation}
In the second line, we have used {\propo} \ref{inf pt and 1}. We have proven {\thm} \ref{spectral and torsion depth}.
\end{proof}

\begin{remark}
   Note that by taking a homogenization ($\ovl{\beta}_{\tor} (H):= \lim_{k \to +\infty} \frac{\beta_{\tor} (H^{\# k})}{k}$) in the inequality in \eqref{BC inequality}, we get
    \begin{equation*}
        \ovl{\gamma}_\Z (H ) \leq   \ovl{\beta}_{\tor} (H)+ \ovl{\beta}_{\tor} (\ovl{H})
    \end{equation*}
    for any $H$. As $\ovl{\gamma}_\Z \neq 0$, it follows that $\ovl{\beta}_{\tor}$ is also not a trivial function, i.e. $\ovl{\beta}_{\tor} \neq 0 .$
 
\end{remark}

\begin{remark}
It is not clear if we have
\begin{equation}
    \beta_{\tor} (\ovl{H})=\beta_{\tor} (H)
\end{equation}
for all $H$. We suspect there to be a counter-example.
\end{remark}

Finally, we prove Theorem \ref{thm: beta tor and gamma}.

\begin{proof}[Proof of Theorem \ref{thm: beta tor and gamma}]

It is sufficient to prove the theorem for $F, G$ non-degenerate. Indeed, then by Hofer continuity $\beta_{\tor}$ extends to arbitrary Hamiltonians and satisfies the desired Lipschitz continuity property.

Fix $\kappa = \gamma_{\Z}(G \# \ol{F})+\eps,$ $\beta = \beta_{\tor}(G)+\eps,$ for $\eps > 0.$ We will prove that $\beta_{\tor}(F) \leq \kappa + \beta.$ Taking $\eps$ to zero, this shows that $\beta_{\tor}(F) - \beta_{\tor}(G) \leq \gamma_{\Z}(G \# \ol{F})$ and exchanging the roles of $F$ and $G$ finishes the proof.

Set $D = G \# \ol{F}.$ For a Hamiltonian $K,$ degree $k \in \Z,$ and $\tau \in \R,$ set $E(\tau,F) = Ext(HF_{k}^{\leq \tau}(K), \Z),$ $E(K) =E(\infty, K) = Ext(HF_{k}(K), \Z)$ and $c(K) = c_{\Z}(K, [M]).$ Then by \cite{KS21, Sh22B} and $Ext(-,\Z)$ being a contravariant functor, there are maps \[C_{F,G}: E(\tau+\kappa, F) \to E(\tau+c(\ol{D}), G), \;\;\;\; C_{G, F}: E(\tau+c(\ol{D}), G) \to E(\tau,F),\]
such that \[C_{G,F} \circ C_{F,G} = i^*_{\tau, \tau+\kappa}: E(\tau+\kappa, F) \to E(\tau, F).\] Moreover $C_{F,G}$ and $C_{G,F}$ commute with the $i^*_{\tau,\tau'}$ and $i^*_{\tau}$ maps on the respective sides.

Now observe that for a Hamiltonian $K,$ $\coker(i^*_{\tau, \tau+\sigma}) = \coker(i^*_{\tau})$ if and only if $\im(i^*_{\tau, \tau+\sigma}) = \im(i^*_{\tau}).$ Of course by the identity \[ i^*_{\tau} = i^*_{\tau,\tau+\sigma} \circ i^*_{\tau+\sigma},\] we have $\im(i^*_{\tau, \tau+\sigma}) \supset \im(i^*_{\tau}).$ We want to show that for $K=F$ the equality holds for all $\tau$ and $\sigma = \beta+\kappa,$ while we know that for $K=G$ it holds for all $\tau$ and $\sigma = \beta.$

Consider now $a \in \im(i^*_{\tau, \tau+\sigma})$ for $\sigma = \beta+\kappa.$ We will prove that there exists $a'' \in E(\infty, F) \cong Ext(QH_k(M;\Z),\Z)$ such that $a = i^*_{\tau}(a'').$ This would finish the proof. 

Write $a = i^*_{\tau, \tau+\beta+\kappa}(a')$ for $a' \in E(\tau+\sigma, F).$ We can rewrite this as \[ a= C_{G,F} \circ C_{F,G} \circ i^*_{\tau+\gamma, \tau+\sigma}(a')=  C_{G,F} \circ i^*_{\tau+c(\ol{D}), \tau+c(\ol{D})+\beta} \circ C_{F,G}(a').\] Set $b' = C_{F,G}(a') \in E(\tau+c(\ol{D})+\beta, G).$ Then by hypothesis $b = i^*_{\tau+c(\ol{D}), \tau+c(\ol{D})+\beta}(b') = i^*_{\tau+c(\ol{D})}(b'')$ for $b'' \in E(G) \cong Ext(QH(M),\Z).$ Therefore \[a =  C_{G,F} \circ i^*_{\tau+c(\ol{D})}(b'').\]

Now $C_{F,G}: E(F) \to E(G)$ is an isomorphism. So $b'' = C_{F,G}(a'')$ for $a'' \in E(F).$ Hence \[  a =  C_{G,F} \circ i^*_{\tau+c(\ol{D})}(b'') = C_{G,F} \circ i^*_{\tau+c(\ol{D})} \circ C_{F,G}(a'') = \] \[ = C_{G,F} \circ C_{F,G} \circ i^*_{\tau+\kappa}(a'') =  i^*_{\tau,\tau+\kappa} \circ i^*_{\tau+\kappa}(a'') = i^*_{\tau}(a'').\]

\end{proof}

\begin{remark}
The only property of $Ext(-,\Z)$ which we used in the proof is it being a contravariant functor from the category of finitely generated abelian groups to itself. Therefore the proof goes through for other such functors. One can prove a similar statement for covariant functors with the definition of the boundary depth suitably modified. \end{remark}

\subsection{Proofs for Section \ref{Dynamical viewpoint}}

The aim of this section is to prove {\thm}s \ref{PR bound} and \ref{torsion bound}.

\begin{proof}[Proof of {\thm} \ref{PR bound}]
Recall that we define pseudo-rotations on $\C P^n$ as {\hamil} {\diffeo}s that have $n+1$ periodic points, following Ginzburg--G\"urel \cite{[GG18]}. We also refer the reader to \cite{[GG18]} for a definition and discussion of the relevant notions of local Floer homology, which we use in this proof. Now, we prove that for every {\PR} $\phi_H$, we have $\gamma_\Z (H) \leq 1$. For a {\PR} $\phi_H$, we have 
    \begin{equation}
        c_\Z (H, [\C P^n]) = \inf_{k \in \N}  c_\Z (H, k \cdot [\C P^n]),
    \end{equation}
because otherwise $\phi_H$ will have more than $n+1$ fixed points as we explain below. Thus, by {\thm} \ref{Z vs Q}, we have 
\begin{equation*}
    \gamma_\Z (H) = \gamma_\Q (H) \leq 1.
\end{equation*}

We now prove that if $\phi$ has precisely $n+1$ fixed points, then every capped one-periodic orbit $\ol{x}$ of $H$ has local Floer homology $HF^{\loc}_*(H, \ol{x}; \Z)$ over $\Z$ supported in a single degree $m=\mu(\ol{x})$ and is free of rank $1$ over $\Z.$ Furthermore $\ol{x} \mapsto m=\mu(\ol{x})$ is a bijection from the set of capped one-periodic orbits to the set $n+2(n+1)\Z$ of integers $m \equiv n \mod 2(n+1).$ Indeed, this statement is true by \cite[Theorem 3.1]{[GG18]} for coefficients in every field $\mathbb{F}$ in place of $\Z.$ In particular, for $\mathbb{F} = \Q,$ we have every capped one-periodic orbit $\ol{x}$ that $\dim_{\Q} HF^{\loc}_*(H, \ol{x}; \Q) = 1$ and it is supported in a unique degree $\mu(\ol{x}).$ Therefore $\rank_\Z HF^{\loc}_*(H, \ol{x}; \Z) =1.$ Now suppose that for a capped one-periodic orbit $\ol{x}$ of $H,$ $HF^{\loc}_*(H, \ol{x}; \Z)$ has torsion (which is necessarily the case if it is not supported in a single degree). Then there exists a prime $p$ such that $\dim_{\F_p} HF^{\loc}_*(H, \ol{x}; \F_p) > 1.$ This is a contradiction to \cite[Theorem 3.1]{[GG18]} for $\mathbb{F}=\F_p,$ which finishes the proof.

Now suppose that $c_{\Z}(H, k\cdot [\C P^n]) < c_\Z (H, [\C P^n])$ for certain $k \in \N$ and $\phi$ has exactly $n+1$ fixed points. Then there exist two capped one-periodic orbits of $H,$ $\ol{x}, \ol{y}$ with distinct action values and non-trivial local Floer homology over $\Z$ in degree $n.$ But then $\mu(\ol{x}) = \mu(\ol{y}) = n$ by the above argument, a contradiction. 
\end{proof}

\begin{proof}[Proof of {\thm} \ref{torsion bound}]

Given a {\hamil} torsion $\phi$, take any generator $H$, i.e. $ \phi=\phi_H$. From the spectrality property for {\specinv}s \eqref{spectrality}, there is a capped orbit $\wt{z}$ such that 
\begin{equation}\label{feyibvwv}
    c_\Z (H,1 ) = \mathcal{A}_H (\widetilde{z}) .
\end{equation}

As $\phi=\phi_H$ is a {\hamil} torsion, we have \[\Spec (H)=\Spec^{ess} (H;\K):=\{c_\K (H, a) \colon a \in QH(M;\K) \}\] for any field $\K$ (c.f. \cite[{\thm} G]{AS23}), thus there exists a class $a \in QH(\C P^n;\K)$ such that 
\begin{equation}\label{invbvrvvrecv}
    c_\K (H, a ) = \mathcal{A}_H (\widetilde{z}) .
\end{equation}
Putting \eqref{feyibvwv} and \eqref{invbvrvvrecv} together, we get 
\begin{equation}\label{csncsoncosncos}
     c_\Z (H,1 ) =  c_\K (H, a ).
\end{equation}

We now look at the mean-index of the capped orbit $\wt{z}$ and obtain a restriction of the degree of $a$. From \cite{AS23,GG12}, we have the following estimate for the mean-index of the action carrier $\wt{z}$ in \eqref{feyibvwv}:
\begin{equation}\label{bcerbceronoier}
    0 \leq \Delta (H, \wt{z}) \leq 2n .
\end{equation}
Similarly from \eqref{invbvrvvrecv}, we have 
\begin{equation}\label{uoneovncvie}
    |a|- 2n \leq \Delta (H, \wt{z}) \leq |a| .
\end{equation}

Putting \eqref{bcerbceronoier} and \eqref{uoneovncvie} together, we get
\begin{equation}\label{ufenooeivevev}
    0 \leq |a| \leq 4n .
\end{equation}

By arguing exactly analogously, we can show that there is a class $b \in QH(\C P^n;\K)$ such that 
\begin{equation}\label{tv elvvndjvnkdlvd}
    c_\Z (\overline{H},1 ) =  c_\K (\overline{H}, b )
\end{equation}
where
\begin{equation}\label{jkndvlnvldnvkdkv}
    0 \leq |b| \leq 4n .
\end{equation}
Now, 
\begin{equation*}
    \begin{aligned}
        \gamma_\Z (H) & =   c_\Z (H,1 ) + c_\Z (\overline{H},1 )  \\
        & = c_\K (H, a ) + c_\K (\overline{H}, b ) \\
         & = c_\K (H, a ) -  c_\K (H, b' ),
    \end{aligned}
\end{equation*}

where $b' \in QH(\C P^n;\K)$ is the generator of degree $|b'|=2n - |b|$. Thus, we have
\begin{equation}\label{vjshbcsbkcbjsckns}
    -2n \leq |b'| \leq 2n .
\end{equation}

Thus, 

\begin{equation*}
    \begin{aligned}
        \gamma_\Z (H) & = c_\K (H, a ) -  c_\K (H, b' )\\
        & \leq \nu (ab'^{-1}  ) \\
        & \leq 3 .
    \end{aligned}
\end{equation*}
where we used $-2n \leq |ab'^{-1}| \leq 6n $.

The proof for the lower bound in {\thm} \ref{torsion bound} works exactly as \cite[{\thm} J]{AS23}, as the argument only uses spectrality of {\specinv}s and thus it carries over to the $\Z$-{\coeff} case. We have completed the proof of {\thm} \ref{torsion bound}.
 \end{proof}

We expect {\thm} \ref{PR bound} to hold in greater generality and propose the following conjecture.

\begin{conj}\label{conj PR}
Let $(M, \omega)$ be a monotone symplectic manifold admitting a pseudo-rotation. Any {\PR} $\phi \in \Ham (M , \omega)$ satisfies 
\begin{equation}\label{PR and spec norm}
     \sup_{k \in \Z } \gamma_\Z (\phi^k) < +\infty.
\end{equation}
\end{conj}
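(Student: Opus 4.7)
The plan is to extend the proof of Theorem \ref{bound generalized PR} (which yields $\gamma_\Z(\phi) \leq 3$ for generalized $\K$-pseudo-rotations on $\C P^n$) from $\C P^n$ to arbitrary monotone $(M,\omega)$ admitting a pseudo-rotation, and to do so uniformly across iterates. The starting observation is that by Definition \ref{def PR}, a generalized pseudo-rotation $\phi$ comes with a cofinal subsequence $k_j \in \N$ along which $\phi^{k_j}$ is itself a generalized pseudo-rotation; along this subsequence one has $\Spec(H_{k_j}) = \Spec^{ess}(H_{k_j};\K)$ for any field $\K$, by the results of \cite{AS23}. Since $\gamma_\Z$ is Hofer-continuous and the $H^{\# k}$ interpolate between the $H^{\# k_j}$ in a controlled way (using concatenation by short Hamiltonians), it would suffice to prove a uniform bound along $k = k_j$, and then extend by continuity.

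For each such iterate, I would repeat the argument of Theorem \ref{bound generalized PR} verbatim: spectrality together with $\Spec(H_{k_j}) = \Spec^{ess}(H_{k_j};\K)$ yields classes $a_k, b_k \in QH(M;\K)$ with
\[
c_\Z(H_k, 1) = c_\K(H_k, a_k), \qquad c_\Z(\overline{H_k}, 1) = c_\K(\overline{H_k}, b_k),
\]
and the mean-index inequalities \eqref{bcerbceronoier}--\eqref{uoneovncvie} constrain $|a_k|, |b_k|$ to lie in $[0, 4n]$ modulo $2N_M$. Poincar\'e duality over the field $\K$ then rewrites this as
\[
\gamma_\Z(\phi^k) \;=\; c_\K(H_k, a_k) - c_\K(H_k, b'_k),
\]
with $b'_k$ of degree in $[-2n, 2n]$ modulo $2N_M$. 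So far none of these steps is $\C P^n$-specific.

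The main obstacle is the final step: bounding the right-hand side by a constant independent of $k$ (and of $\K$, which one is free to choose). In $\C P^n$ this is essentially automatic because $QH(\C P^n;\K)$ is a field and multiplication by a class of bounded degree shifts spectral invariants by a universally bounded amount via the intrinsic valuation. For general monotone $(M,\omega)$ one would try to replace this by the pair of statements: (i) Entov--Polterovich asymptotic spectral invariants $\zeta_e$ associated to a field factor $e$ of $QH(M;\K)$ are homogeneous quasimorphisms that vanish on iterates of pseudo-rotations, by a standard spectrality-plus-subadditivity argument; and (ii) a Lipschitz-type comparison between $c_\K(H_k, a)$ and $\zeta_e$ for classes $a$ of bounded degree. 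Step (ii) is the hard piece: without a manageable valuation on $QH(M;\K)$, or semisimplicity with explicit factorization, it is unclear how to uniformly bound the spread between spectral invariants of different classes of bounded degree on a single Hamiltonian. I would expect a complete proof of Conjecture \ref{conj PR} either to restrict to manifolds $(M,\omega)$ whose quantum cohomology enjoys enough algebraic structure (e.g., semisimplicity over $\C$ together with a Novikov-theoretic control of the field factors), or to use a new persistence-theoretic input at the chain level that exploits the pseudo-rotation condition more directly than via spectrality alone.
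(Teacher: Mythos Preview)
The statement you are addressing is labeled as a \emph{Conjecture} in the paper, not a theorem; the authors explicitly write that they ``expect Theorem~\ref{PR bound} to hold in greater generality and propose the following conjecture,'' and provide no proof. So there is no argument in the paper to compare against, and your proposal should be read as an outline of a possible strategy rather than a claimed proof --- which, to your credit, is essentially how you present it, since you end by saying step~(ii) is ``the hard piece'' and that a complete proof would likely require additional algebraic hypotheses or new input.

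Two points in your outline deserve sharpening even as a strategy. First, the interpolation step ``extend by continuity'' from the subsequence $k_j$ to all $k$ is not innocuous: Hofer-Lipschitz continuity of $\gamma_\Z$ gives $|\gamma_\Z(\phi^k)-\gamma_\Z(\phi^{k_j})| \leq |k-k_j|\cdot d_{\Hof}(\id,\phi)$, which is useless unless the gaps $|k_{j+1}-k_j|$ are uniformly bounded; for generalized pseudo-rotations this is not part of the definition. Second, the assertion that ``$\phi^{k_j}$ is itself a generalized pseudo-rotation'' does not follow directly from Definition~\ref{def PR}: the homological count condition $N(\phi^{k_j};\K)=\dim_\K H_*(M;\K)$ is a condition on the single map $\phi^{k_j}$, not a pseudo-rotation condition for $\phi^{k_j}$ (which would require its own sequence of iterates). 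What one actually uses in the $\C P^n$ argument is the equality $\Spec(H^{\# k_j})=\Spec^{ess}(H^{\# k_j};\K)$ from \cite{AS23}, and you should invoke that directly rather than repackage it as ``$\phi^{k_j}$ is a pseudo-rotation.'' The core obstruction you flag --- that for general $(M,\om)$ there is no analogue of the valuation argument bounding $c_\K(H,a)-c_\K(H,b')$ in terms of $|a b'^{-1}|$ alone --- is indeed the reason this remains a conjecture.
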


\begin{remark}
    The converse of {\conjec} \ref{conj PR} does not hold; there are {\hamil} {\diffeo}s satisfying \eqref{PR and spec norm} that are not {\PR}s (e.g. for a {\hamil} $H$ supported in a displaceable open set, $\phi_H$ satisfies \eqref{PR and spec norm}).
\end{remark}

We conclude the section with the proof of {\thm} \ref{Hofer geom results}.

\begin{proof}[Proof of {\thm} \ref{Hofer geom results}]

From {\propo} \ref{Z2vsC}, we have two distinct {\qmor}s $\zeta_{\Z/2}$ and $\zeta_\C$ on $\wt{\Ham} (\C P^n)$. By \cite[Section 4.3]{[EP03]}, these two {\qmor}s descend to $\Ham (\C P^n)$, thus they define two distinct {\qmor}s on $\Ham (\C P^n)$ which are both Hofer-Lipschitz continuous. This completes the first part of the claim. The second part was shown in Claim \ref{EPP question for CPn}.

\end{proof}

\subsection{Remark on $\Z$-{\suphvness}}

From {\propo} \ref{key obs}, it follows that
\begin{equation*}
    \zeta_{\K}(H) \leq \zeta_{\Z}(H):= \lim_{k \to +\infty} \frac{c_\Z (k \cdot H, [M])}{k}
\end{equation*}
for any autonomous {\hamil} $H$, and thus $\zeta_{\Z}$-{\suphvness} is a stronger condition than $\zeta_{\K}$-{\suphvness} for any field $\K$. We point out the following new hierarchy of {\symp} rigidity in terms of {\suphvness}.

\begin{prop}\label{Z suphv}
    Consider $\C P^n$. The Clifford torus $T^n _\mathrm{Clif}$ is $\zeta_{\Z}$-{\suphv} while the Chekanov torus $T^n _\mathrm{Chek}$ and $\R P^n$ are not.
    
    \end{prop}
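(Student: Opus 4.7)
My plan splits the statement into the negative assertions for $T^n_\mathrm{Chek}$ and $\R P^n$ and the positive assertion for $T^n_\mathrm{Clif}$, and I handle the former first.

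The key input for the negative part is Proposition \ref{key obs}: applied to the canonical homomorphisms $\Z\to\K$ for $\K=\C,\Z/2$, it yields $c_{\K}(H,1_{\K})\le c_{\Z}(H,1_{\Z})$, hence after homogenization $\zeta_{\K}(H)\le\zeta_{\Z}(H)$ for every autonomous $H$. Consequently, any $\zeta_{\Z}$-{\suphv} subset is automatically $\zeta_{\K}$-{\suphv}. Since $\R P^n$ is $\zeta_{\Z/2}$-{\suphv} and $T^n_\mathrm{Chek}$ is $\zeta_{\C}$-{\suphv} (Proposition \ref{C and Z2 suphv}), and the two are disjoint (Proposition \ref{disjoint torus}), Proposition \ref{suphv constant} forces $T^n_\mathrm{Chek}$ to fail $\zeta_{\Z/2}$-{\suphvness} and $\R P^n$ to fail $\zeta_{\C}$-{\suphvness}; by the previous inequality, neither is $\zeta_{\Z}$-{\suphv}.

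For the positive claim I would proceed in two steps. First, I would establish $\zeta_{\Z}$-heaviness of $T^n_\mathrm{Clif}$ via the closed-open map over $\Z$. The Landau--Ginzburg superpotential
\[ W(w_1,\ldots,w_n) = w_1+\cdots+w_n+(w_1\cdots w_n)^{-1} \]
has a non-degenerate critical point at $w=(1,\ldots,1)$ defined over $\Z$, so the Biran--Cornea pearl complex gives $HF(T^n_\mathrm{Clif};\Z)\ne 0$, and the closed-open map $\mathcal{CO}^0\colon QH(\C P^n;\Z)\to HF(T^n_\mathrm{Clif};\Z)$ is unital, hence $\mathcal{CO}^0(1_{\Z})\ne 0$. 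Applying (the $\Z$-coefficient version of) Theorem \ref{CO map heavy} then yields $\inf_{T^n_\mathrm{Clif}} H\le\zeta_{\Z}(H)$ for every autonomous $H$.

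The second step, upgrading heaviness to {\suphvness}, is the main obstacle. The standard reduction of Remark \ref{hv and suphv} requires $\zeta_{\Z}$ to be a homogeneous quasimorphism, and this fails in our setting: using that $\zeta_{\K}$ is a genuine homogeneous quasimorphism for $\K=\C,\Z/2$, the identity $\mu(H)=\zeta_{\C}(H)+\zeta_{\Z/2}(-H)\le\zeta_{\Z}(H)+\zeta_{\Z}(-H)$ together with the nontriviality of $\mu=\zeta_{\C}-\zeta_{\Z/2}$ from the proof of Theorem \ref{main CPn} shows $\zeta_{\Z}(H)+\zeta_{\Z}(-H)$ can be strictly positive. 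My plan is to exploit the non-degeneracy of the critical point of $W$ over $\Z$ to obtain a ring homomorphism $QH(\C P^n;\Z)\to\Z$ by evaluation at $(1,\ldots,1)$, which produces idempotents after localization at each prime. Combining the classical $\zeta_{\Q}$-{\suphvness} of $T^n_\mathrm{Clif}$ with a bound on the integer-to-rational correction $c_{\Z}(kH,1_{\Z})-c_{\Q}(kH,1_{\Q})$ by the torsion depth (Definition \ref{beta tor}, Theorem \ref{thm: beta tor and gamma}) should produce the asymptotic inequality $\zeta_{\Z}(H)\le\sup_{T^n_\mathrm{Clif}} H$. Controlling the growth of the torsion-depth contribution under iteration $H\mapsto kH$ is the technical heart of the argument.
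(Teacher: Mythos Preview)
Your argument for the negative assertions is correct and agrees with the paper's, which uses the same implication ($\zeta_\Z$-{\suphv} $\Rightarrow$ $\zeta_\K$-{\suphv}, stated just before the proposition) together with the disjointness from Proposition~\ref{disjoint torus}.

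The gap is in the positive claim for $T^n_\mathrm{Clif}$. You rightly observe that Remark~\ref{hv and suphv} is unavailable because $\zeta_\Z$ fails the quasimorphism property, but your proposed workaround does not close. To pass from $\zeta_\Q$-{\suphvness} to $\zeta_\Z$-{\suphvness} along your route you would need the correction $c_\Z(kH,1_\Z)-c_\Q(kH,1_\Q)=\beta_{\spec}(kH)$ to be $o(k)$ for every autonomous $H$. However, the identity $\gamma_\Z(H)=\beta_{\spec}(H)+\beta_{\spec}(\overline{H})+\gamma_\Q(H)$ from the proof of Theorem~\ref{hingston positive}, combined with the linear growth of $\gamma_\Z$ along iterates in Theorem~\ref{main CPn} and the uniform bound on $\gamma_\Q$, shows that $\beta_{\spec}(kH)$ can grow linearly in $k$. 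The torsion-depth bound you invoke only gives $\beta_{\spec}(kH)\le\beta_{\tor}(k\overline{H})\le\gamma_\Z(kH)$ via Theorems~\ref{spectral and torsion depth} and~\ref{thm: beta tor and gamma}, which is again linear. Nothing in your outline isolates a mechanism by which the Clifford torus forces sublinear growth.

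The paper bypasses this difficulty entirely by a different route: it invokes \cite[Theorem~1.11]{[EP09]}, which proves \emph{directly} that the special toric fiber is {\suphv} with respect to any idempotent. The Entov--Polterovich argument there is not a closed-open/heaviness statement followed by an upgrade; it is an action and Conley--Zehnder index computation for Hamiltonians that factor through the moment map, and it uses neither Poincar\'e duality nor the hypothesis that the coefficient ring is a field. Hence it transfers verbatim to $\Z$-coefficients and yields $\zeta_\Z$-{\suphvness} of $T^n_\mathrm{Clif}$ in one step. You should replace your second step by this observation.
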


\begin{proof}[Proof of {\propo} \ref{Z suphv}]
    In \cite[{\thm} 1.11]{[EP09]}, it was proven that the \textit{special fiber} of a monotone {\symp} toric {\mfd} is {\suphv} {\wrt} any idempotent of the quantum homology. Although, \cite[{\thm} 1.11]{[EP09]} was formulated for quantum homology over $\C$, the proof is based on the index of {\hamil} orbits and in particular does not use the Poincar\'e duality and thus it carries over to $\Z$-{\coeff}s. The special fiber in $\C P^n$ is precisely the Clifford torus and thus, we conclude that the Clifford torus is $\zeta_{\Z}$-{\suphv}. As the Chekanov torus $T^n _\mathrm{Chek}$ and $\R P^n$ are disjoint (see {\propo} \ref{disjoint torus}), we conclude that they are not $\zeta_{\Z}$-{\suphv} (even though they are $\zeta_{\C}$-{\suphv} and $\zeta_{\F_2}$-{\suphv}, respectively, see {\propo} \ref{C and Z2 suphv}).
\end{proof}

\section{Discussions}\label{Discussions}

\subsection{Quantum linking form}

In $M= \C P^n$, there might be $ b \in Tor (HF^\tau (H))$, which eventually disappears, such that $ c (\ovl{H}, [M]) $ corresponds to $-\tau$ where $PSS_H ([pt]) + b$ appears at $HF^\tau (H)$. It would be interesting to study the ``filtered quantum linking form'' which is the quantum counterpart of the linking form in singular homology theory (an analogue of the Poincar\'e pairing for torsion classes) by taking the filtration into account. The non-filtered version of this looks as follows:

$$\curlL : QH_{\ast} (M ; \Z) \otimes QH_{2n-1- \ast} (M ; \Q / \Z) \to \Q / \Z$$
which comes from the following operation:
$$ \oast : QH_{i} (M ; \Z) \otimes Tor (QH_{j} (M ; \Z) ) \to QH_{i+j -2n+1} (M ; \Q / \Z) $$
$$(a,b) \mapsto a \ast \beta^{-1} (b)$$
where 
$$\beta : QH_{j} (M ; \Q / \Z) \to QH_{j-1} (M ;  \Z)    $$
denotes the Bockstein homomorphism.

We have for $a \in  QH_{i} (M ; \Z)$, $ b \in  QH_{2n-1-i} (M ; \Z)$,
$$ \curlL (a ,b ) = \langle a \oast b , 1 \rangle  ,\ 1 \in QH^0 (M ; \Q / \Z).$$

\subsection{The Chiang {\lag}}

It was shown by Evans--Likili \cite{EL15} that the Chiang {\lag} $L_{\text{Chi}}$, an exotic {\lag} constructed by Chiang \cite{Chi04}, in $\C P^3$ has non-zero Floer homology group over a field of {\charac} $5$. This implies that 
\begin{equation}\label{CP3 char 5}
    \zeta_{1_{\F_5}} = \overline{\ell}_{L_{\text{Chi}};\F_5},
\end{equation}
where $\overline{\ell}_{L_{\text{Chi}};\F_5}$ denotes the {\asympt} {\lag} {\specinv} for the unit of $QH(L_{\text{Chi}};\F_5)$; although in Section \ref{prelim specinv}, we only defined {\specinv}s for {\hamil} Floer homology, we can define analogous invariants for {\lag} Floer theory (see \cite{LZ18}). Given a {\lag} $L$, we denote the {\lag} {\specinv} for a {\hamil} $H$ and a quantum homology class $a \in QH(L)$ by $\ell(H,a)$.\footnote{When we take the fundamental class $a = [L]$, we abbreviate as $\ell_{L} =\ell (- , [L]) $.}

Konstantinov \cite{Kon} studied Floer homology with higher rank local systems and proved that the {\HF} of the Chiang {\lag} $L_{\text{Chi}}$ in $\C P^3$ is non-zero over a field of {\charac} $2$ with some higher rank local systems. Provided that we have the theory of {\specinv}s for Floer homology with higher rank local systems, we will have 
\begin{equation}\label{CP3 char 2}
    \zeta_{1_{\F_2}} = \overline{\ell}_{(L_{\text{Chi}},\rho);\F_2}.
\end{equation}

We also know that 
\begin{equation*}
    \zeta_{1_{\F_2}} = \overline{\ell}_{\R P^3;\F_2},
\end{equation*}
as $HF(\R P^3;\F_2) \neq 0$. Thus, 
\begin{equation*}
    \overline{\ell}_{(L_{\text{Chi}},\rho);\F_2} = \overline{\ell}_{\R P^3;\F_2},
\end{equation*} 
which is consistent with the fact that $\R P^3$ and $L_{\text{Chi}}$ are not {\hamil} displaceable from one another. It would be interesting to understand whether one can distinguish $\zeta_{1_{\F_2}}$ and $\zeta_{1_{\F_5}}$. Given \eqref{CP3 char 2} and \eqref{CP3 char 5}, they ``support'' the same {\lag} so naively one might guess that they coincide, but it is not clear to the authors. 

It would also be nice to know whether or not the Chiang {\lag} is disjoint from a suitable Chekanov-type torus.

\subsection{Question of Hingston}\label{geodesics}

In \cite{HR13}, Hingston and Rademacher studied a certain {\specinv} in the context of loop homology and considered a similar phenomenon to the theme of this paper. We briefly review the concerning discussion of \cite{HR13}.

Let $M$ be a compact manifold equipped with a Riemannian (or Finsler) metric $g$. Let $R$ be a finitely generated abelian group, e.g. $\Z$, $\Z/k\Z$ with $k \in \N$. Now, denote the homology group of the loop space of $M$ {\wrt} the ground ring $R$ by $H_\ast (\Lambda M; R)$. Let $\Lambda^\tau M$ denote the subspace of free loop space $\Lambda M$ which consists of loops whose square-root of energy\footnote{One could also use the length functional, but following Goresky and Hingston \cite{GH09} we use $F$ for better differentiability properties.} $F(\gamma) = \sqrt{\int_{S^1} ||\dot{\gamma}(t)||dt}$ is at most $\tau.$  For a non-trivial homology class $\alpha \in H_\ast (\Lambda M; R)$, the critical (minimax) level of $\alpha$ (i.e. the {\specinv} for $\alpha$) is defined as follows.

\begin{equation*}
    \begin{aligned} 
        cr (\alpha) &= \inf \{ \tau  : \alpha \in \Im \, H_\ast (\Lambda^\tau M; R) \to H_\ast(\Lambda M; R)  \}
    \end{aligned}
\end{equation*}

\begin{remark}
One can understand this definition intuitively via a ``minimax definition'', as follows: \[cr(\alpha) = \inf_{[x] = \alpha} \sup_{\gamma \in \Im (x)}  F(\gamma ),\] where the singular chain $x$ ranges over all representatives of the homology class $\alpha$, and $\gamma$ ranges over all the points in the image of $x$, which are loops in $M$.
\end{remark}
Hingston poses the following question for these {\specinv}s.

\begin{question}[{\cite[Remark 3.4]{HR13}, \cite[Question 3.2.1]{BM21}}]\label{Hingston question}
    Does there exist a metric on $M=S^n$ and a non-torsion homogeneous homology class $\alpha \in H_d (\Lambda M; Z)$ with
    \begin{equation}
        0 < cr(m\cdot \alpha) < cr(\alpha )
    \end{equation}
for some $m \in  \N$?
\end{question}

\begin{remark}\label{remark on Hingston question}
    Chambers--Liokumovich showed that for $S^2$, $m$ odd and $d=1$, the answer to Question \ref{Hingston question} is actually negative \cite[{\cor} 1.3]{CL19}. See \cite[Section 3]{BM21} for more about this question.
\end{remark}

An analogous question in the context of {\hamil} Floer homology would be the following (Question \ref{symp ver q of hingston}).

\begin{question}[Hingston's question: {\symp}]\label{Hingston question detailed}
    Does there exist a {\symp} {\mfd} $(M,\omega)$ and a {\hamil} $H$ on it such that 
    \begin{equation*}
       \inf_{k \in \Z} c_\Z (H , k[M]) < c_\Z (H , [M])
    \end{equation*}
    for some $k \in  \N$?
\end{question}

In {\thm} \ref{hingston positive gamma estimate}, we have answered Question \ref{Hingston question detailed} (a.k.a. Question \ref{symp ver q of hingston}) in the positive. It would be interesting to study the relation between Question \ref{Hingston question} and Question \ref{Hingston question detailed}.

\subsection{$C^0$-continuity of the $\Z$-{\specnorm}}\label{C0 conti of gamma}

Over a field {\coeff} $\K$, the {\specnorm} $\gamma_\K $ for $\C P^n$ is known to be continuous {\wrt} the $C^0$-metric on $\Ham (\C P^n)$. There are currently two known proofs for this \cite{Sh22A,Kaw22B}, but both proofs rely on the estimate \eqref{C CPn}, which does not hold for the {\specnorm} over $\Z$, c.f. {\thm} \ref{main CPn}. Therefore, $C^0$-continuity of the $\Z$-{\specnorm} for $ \C P^n$ is currently not verified apart from the $n=1$-case; the $n=1$-case was proven by Seyfaddini \cite{[Sey13]} by using two-dimensional fragmentation techniques and basic properties of {\specinv}s and the argument applies to the $\Z$-{\coeff} case. For $n>1$, we only know some partial results for $C^0$-continuity, e.g. \cite[{\thm}s 1, 4(1)]{Kaw22B} (see also \cite[Remark 25]{Kaw22B}). 

\begin{question}
    Is the {\specnorm} over $\Z$
    \begin{equation*}
        \gamma_\Z \colon \Ham (\C P^n) \to \R
    \end{equation*}
   continuous {\wrt} the  $C^0$-metric when $n>1$?
\end{question}

\subsection{Case of $S^2 \times S^2$}

 In this section, we refer to a similar phenomenon that happens on {\symp} {\mfd}s other than $\C P^n$.

 Consider $S^2 \times S^2$ equipped with the same area form on both sides so that it is a monotone {\symp} {\mfd}. There are two disjoint {\lag}s in $S^2 \times S^2$, namely the anti-diagonal $\Delta \simeq S^2$ and the Chekanov torus $T^2 _{\text{Chek}}$, and they both have non-zero {\HF} over $\mathbb{F}_3$, a field of {\charac} $3$ \cite{Smi17}. By using this, we can show that $\sup \gamma_{\F_3} = +\infty.$ The same holds over $\C$-{\coeff}s, i.e. $\sup \gamma_{\C} = +\infty.$

\subsection{$\R P^n$ over $\Z$-{\coeff}s}
Although we did not explicitly mention in the paper, all the results in Section \ref{PD general} hold also for {\lag} {\specinv}s for a monotone {\lag}.

Take $\R P^n$ inside $\C P^n$ for $n \geq 2$. We assume that $n$ is odd, so that $\R P^n$ is oriented. According to Zapolsky's computation \cite[Section 8.1]{Zap}, the {\lag} (or quantum) Floer homology over $\Z$-{\coeff}s is 
\begin{equation*}
    QH_j (\R P^n;\Z) = \begin{cases}
    \Z /2  \text{   when $j$ is odd.}\\
    0  \text{   when $j$ is even.}
    \end{cases}
\end{equation*}
This implies that every non-trivial class of $QH (\R P^n;\Z)$ is a torsion class. Thus, we have 
\begin{equation*}
    \beta_{\tor} (H) = +\infty 
\end{equation*}
for any {\hamil} $H$. In this situation, {\thm} \ref{corrected formula} is a void statement. 

\begin{question}\label{ques rpn}
    How can we express $\ell (\overline{H},[\R P^n] )$ by using information of the filtered Floer homology $HF^\tau (H;\Z)$ of $H$?
\end{question}

Note that when we work over $\Z/2$ instead of $\Z$, then we have
\begin{equation*}
    QH_j (\R P^n;\Z/2) = \Z/2 \text{      for all $j$}
\end{equation*}
and thus, we have 

\begin{equation}\label{pd rpn z2}
    \ell (\overline{H},[\R P^n] ) = - \ell (H,[pt] ) . 
\end{equation}

It seems very interesting to study the relation between Question \ref{ques rpn} and Equation \eqref{pd rpn z2}.

\subsection{Floer barcodes over the integers}

The introduction of the barcode theory from Topological Data Analysis (TDA) to symplectic topology by Polterovich--Shelukhin \cite{[PS16]} has found numerous applications in recent years. Barcode theory associates to certain $\R$-families of vector spaces, called persistence modules, objects called \textit{barcodes}. Polterovich--Shelukhin \cite{[PS16]} used this to define a \textit{Floer barcode} from the filtered Floer homology $\{HF^{<t} (H;\K)\}_{t \in \R}$, which is an $\R$-family of $\K$-vector spaces. When we work over $\Z$-{\coeff}s like in this paper, there arises an issue as the filtered Floer homology $\{HF^{<t} (H;\Z)\}_{t \in \R}$ defines an $\R$-family of $\Z$-modules instead of vector spaces. In TDA, there has been some work to define barcodes for $\R$-families of $\Z$-modules, c.f. \cite{LHP,Pat18}. It would be interesting to incorporate this progress into quantitative {\symp} topology.

\subsection{Towards quantitative Floer homotopy theory}

In recent years, some interesting results were obtained by using new {\coeff}s in Floer theory, which stem from the \textit{Floer homotopy theory}, e.g. \cite{AB, BX, Lar}. One might expect to seek quantitative applications of these Floer theories with new {\coeff}s and obtain new phenomena as in this paper. We mention the recent work of Barraud--Damian--Humili\`ere--Oancea \cite{BDHO} which aligns with this viewpoint.

\subsection{Choice of characteristic}

Finally, we list some works where the choice of the characteristic of the field to set up Floer theory plays an important role; \cite{Ton18,TV18,EL19, LN23}.

\appendix

\Addresses

\end{document}